\numberwithin{equation}{section}
\newtheorem{Theorem}[equation]{Theorem}
\newtheorem{Proposition}[equation]{Proposition}
\newtheorem{Lemma}[equation]{Lemma}
\newtheorem{Corollary}[equation]{Corollary}
\theoremstyle{definition}
\newtheorem{Remark}[equation]{Remark}
\newtheorem{eg}[equation]{Example}
\newenvironment{example}[1][]{\begin{eg}[#1] \pushQED{\qed}}{\popQED \end{eg}}
\newtheorem{Definition}[equation]{Definition}
\newtheoremstyle{named}{}{}{\itshape}{}{\bfseries}{.}{.5em}{#1\thmnote{ #3}}
\theoremstyle{named}
\newtheorem*{namedtheorem}{Theorem}
\newcommand{\cA}{\mathcal{A}}
\newcommand{\bB}{\mathbf{B}}
\newcommand{\cB}{\mathcal{B}}
\newcommand{\cC}{\mathcal{C}}
\newcommand{\sE}{\mathsf{E}}
\newcommand{\sF}{\mathsf{F}}
\newcommand{\sG}{\mathsf{G}}
\newcommand{\sH}{\mathsf{H}}
\newcommand{\cL}{\mathcal{L}}
\newcommand{\bM}{\mathbf{M}}
\newcommand{\cM}{\mathcal{M}}
\newcommand{\sM}{\mathsf{M}}
\newcommand{\cO}{\mathcal{O}}
\newcommand{\cS}{\mathcal{S}}
\newcommand{\cT}{\mathcal{T}}
\newcommand{\cV}{\mathcal{V}}
\newcommand{\bX}{\mathbf{X}}
\newcommand{\ba}{\mathbf{a}}
\newcommand{\fa}{\mathfrak{a}}
\newcommand{\bb}{\mathbf{b}}
\newcommand{\fb}{\mathfrak{b}}
\newcommand{\fg}{\mathfrak{g}}
\newcommand{\fh}{\mathfrak{h}}
\newcommand{\kk}{\mathbbm{k}}
\newcommand{\st}{\mathsf{t}}
\newcommand{\CC}{\mathbb{C}}
\newcommand{\NN}{\mathbb{N}}
\newcommand{\PP}{\mathbb{P}}
\newcommand{\RR}{\mathbb{R}}
\newcommand{\ZZ}{\mathbb{Z}}
\renewcommand{\phi}{\varphi}
\renewcommand{\emptyset}{\varnothing}
\newcommand{\eps}{\varepsilon}
\renewcommand{\tilde}[1]{\widetilde{#1}}
\newcommand{\ul}[1]{\underline{#1}}
\newcommand{\sh}[1]{[#1]}
\DeclareMathOperator{\End}{End}
\DeclareMathOperator{\SYT}{SYT}
\newcommand{\SL}{SL}
\newcommand{\fsl}{\mathfrak{sl}}
\newcommand{\bone}{\mathbbm{1}}
\newcommand{\sone}{\mathsf{1}}
\newcommand{\mmod}{\mathrm{-mod}}
\newcommand{\proj}{\mathrm{-proj}}
\newcommand{\soc}{\mathrm{soc}}
\newcommand{\wt}{\text{wt}}
\newcommand\iso\cong
\newcommand\into\hookrightarrow
\newcommand\onto\twoheadrightarrow
\newcommand{\Hom}{\operatorname{Hom}}
\newcommand{\nc}{\newcommand}
\nc{\la}{\lambda}
\nc{\Iso}{\mathsf{Iso}}
\nc{\Irr}{\mathsf{Irr}}
\nc{\Id}{\mathrm{Id}}
\begin{document}

\title[Categorical braid group actions and cactus groups]{Categorical braid group actions \\ and cactus groups}
\author{Iva Halacheva}
\address{I.~Halacheva: Department of Mathematics, Northeastern University, USA}
\email{i.halacheva@northeastern.edu}

\author{Anthony Licata}
\address{A.~Licata: Mathematical Sciences Institute, Australian National University, Australia}
\email{anthony.licata@anu.edu.au}

\author{Ivan Losev}
\address{I.~Losev: Department of Mathematics, Yale University, and School of Mathematics, IAS, USA}
\email{ivan.loseu@gmail.com}

\author{Oded Yacobi}
\address{O.~Yacobi: School of Mathematics and Statistics, University of Sydney, Australia}
\email{oded.yacobi@sydney.edu.au}
\date{}
\maketitle

\begin{abstract}
Let $\fg$ be a semisimple simply-laced Lie algebra of finite type.  Let $\cC$ be an abelian categorical representation of the quantum group $U_q(\fg)$ categorifying an integrable representation $V$.  
The Artin braid group $B$ of $\fg$ acts on $D^b(\cC)$ by Rickard complexes, providing a triangulated equivalence 
\begin{align*}
\Theta_{w_0}:D^b(\cC_\mu) \to D^b(\cC_{w_0(\mu)})
\end{align*}
where $\mu$ is a weight of $V$, and $\Theta_{w_0}$ is a positive lift of the longest element of the Weyl group.  

We prove that this equivalence is t-exact up to shift when $V$ is isotypic, generalising a fundamental result of Chuang and Rouquier in the case $\fg=\fsl_2$.
For general $V$, we prove that $\Theta_{w_0}$ is a perverse equivalence with respect to a Jordan-H\"older filtration of $\cC$.  

Using these results we construct, from the action of 
$B$ on $V$, an action of the cactus group on the crystal of $V$.  This recovers the  cactus group action on $V$ defined via generalised Sch\"utzenberger involutions, and  provides a new connection between categorical representation theory and crystal bases.
We also use these results to give new  proofs of theorems of Berenstein-Zelevinsky, Rhoades, and Stembridge regarding the action of symmetric group on the Kazhdan-Lusztig basis of its Specht modules.  
\end{abstract}

\section{Introduction}
In their seminal work, Chuang and Rouquier introduced $\fsl_2$ categorifications on abelian categories \cite{CR}.  Their definition mirrors the notion of an $\fsl_2$ representation on a vector space: weight spaces are replaced by weight categories, Chevalley generators acting on them are replaced by Chevalley functors, and Lie algebra relations are replaced by isomorphisms of functors.  But, crucially, these isomorphisms are part of the ``higher data'' of categorification.

The richness of this theory was immediately evident.  As a corollary of an $\fsl_2$ categorification on representations of symmetric groups in positive characteristic, Chuang and Rouquier proved Broue's abelian defect conjecture in that case.  The essential tool allowing them to do this is the Rickard complex, which is a categorical lifting of the reflection matrix in $\SL_2$, and provides a derived equivalence between opposite weight categories.

Subsequently, Rouquier and Khovanov-Lauda vastly generalised this theory to  quantum symmetrisable Kac-Moody algebras $U_q(\fg)$ \cite{Rou2KM,KLI,KLII}.  Let $\kk$ be any field.  A graded abelian $\kk$-linear category $\cC$ endowed with a categorical representation of $U_q(\fg)$ possesses a family of Rickard complexes $\Theta_i$, indexed by the simple roots of $\fg$, acting on the derived category $D^b(\cC)$.  

Henceforth let $\fg$ be a semisimple simply-laced Lie algebra of finite type with Dynkin diagram $I$, $W$  its Weyl group, and $B$ its Artin braid group. Let $\cC$ be a categorical representation of $U_q(\fg)$ as in the previous paragraph.  Cautis and Kamnitzer proved that Rickard complexes satisfy the braid relations, as conjectured by Rouquier \cite{CK3}.  This defines an action of $B$ on $D^b(\cC)$, and is our main object of study.    

Categorical braid group actions defined via Rickard complexes have many significant applications.  For example, in low dimensional topology, the type A link homology theories (in particular Khovanov homology) emerge as a byproduct of these types of categorical braid group actions \cite{CK, CKII, LQR}.  In mirror symmetry, the theory of spherical twists plays an important role, and these all arise from categorical $\fsl_2$ representations \cite{ST01}.  

To describe our first theorem, recall that minimal categorifications are certain distinguished categorifications of simple representations.  On these the Rickard complex $\Theta_i$  is t-exact up to shift \cite[Theorem 6.6]{CR}.  Notice that this is a result about $\fsl_2$ categorifications, and in fact, this is one of Chuang-Rouquier's key technical results which they use to prove the derived equivalence.  

We generalise this result to $U_q(\fg)$, where we show that the composition of Rickard complexes corresponding to a positive lift of the longest element $w_0 \in W$ is t-exact up to shift on any isotypic categorification.  More precisely:

\begin{namedtheorem}[A][Theorem \ref{thm:mincat} \& Corollary \ref{cor:mainthm}]\label{thm:A}
Let $\cC$ be a categorical representation of $U_q(\fg)$ categorifying an isotypic representation of type $\lambda$, where $\lambda$ is a dominant integral weight.  Let $\mu$ be any weight, and let $n$ be the height of $\mu-w_0(\lambda)$.  Then the derived equivalence
\begin{align*}
\Theta_{w_0}\bone_\mu[n]:D^b(\cC_\mu) \to D^b(\cC_{w_0(\mu)})
\end{align*}
is t-exact.  
\end{namedtheorem}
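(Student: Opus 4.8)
The plan is to reduce the statement about a general isotypic categorification to the known result of Chuang--Rouquier for $\fsl_2$ minimal categorifications, by first handling the universal case and then transferring along functors. The key observation is that an isotypic categorification of type $\lambda$ should be, up to suitable equivalence, controlled by a ``minimal'' isotypic categorification --- the one built from the minimal categorification of the simple $U_q(\fg)$-module $V(\lambda)$ tensored with a multiplicity category carrying no $\fg$-action. So the first step is to prove the theorem for this minimal isotypic categorification $\cC^{\mathrm{min}}$. Here one can use a reduced word $w_0 = s_{i_1} \cdots s_{i_N}$ and factor $\Theta_{w_0} = \Theta_{i_1} \cdots \Theta_{i_N}$; each $\Theta_{i_k}$ is a Rickard complex for the $\fsl_2$-categorification attached to the root $i_k$, acting on the relevant weight category. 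The content of \cite[Theorem 6.6]{CR} is exactly that each such $\Theta_{i_k}\bone_\nu$ is t-exact up to an explicit shift on a minimal $\fsl_2$-categorification; one must check that the intermediate weight categories $\cC^{\mathrm{min}}_\nu$ appearing along the chosen reduced word are themselves minimal $\fsl_2$-categorifications in the appropriate $i_k$-direction (this follows from the structure theory of minimal categorifications of $V(\lambda)$), and then compose, keeping track of the accumulated shift. A bookkeeping computation identifies the total shift with $n = \mathrm{ht}(\mu - w_0(\lambda))$, using that the $\fsl_2$-shift at the $k$-th step depends only on the weight of the module.

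The second step is to show that the conclusion is insensitive to the ``multiplicity'' directions: if $\cC_0$ is any abelian category (no $\fg$-action) and we form $\cC^{\mathrm{min}} \boxtimes \cC_0$ (or more precisely the relative tensor product over $\kk$), then $\Theta_{w_0}$ acts as $\Theta_{w_0}^{\mathrm{min}} \boxtimes \Id$, which is t-exact up to shift because $\Theta_{w_0}^{\mathrm{min}}$ is. The third and main step is to descend from $\cC^{\mathrm{min}} \boxtimes \cC_0$ to an arbitrary isotypic $\cC$. The strategy is to exploit that a categorical $U_q(\fg)$-action is determined by the $2$-representation data, and that for an isotypic category one has enough structure (a universal deformation, or a comparison functor arising from the biadjunctions of the Chevalley functors) to build a functor $\cC^{\mathrm{min}} \boxtimes \cC_0 \to \cC$ or $\cC \to \cC^{\mathrm{min}} \boxtimes \cC_0$ that is exact, faithful, and intertwines the braid group actions. t-exactness up to shift can then be tested after applying such a functor. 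Concretely I would use the highest weight space $\cC_\lambda$ as the multiplicity category $\cC_0$, and the functors $\bE^{(\cdot)}, \bF^{(\cdot)}$ (divided powers of Chevalley functors) to move between $\cC_\lambda$ and the other weight spaces; the point is that on an isotypic categorification these divided-power functors are, up to the minimal-categorification combinatorics, ``as faithful as possible'', so an object of $\cC_\mu$ sits in the heart iff its images under the relevant functors do.

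The main obstacle I anticipate is precisely this last descent: formulating and proving the sense in which an arbitrary isotypic categorification is controlled by the minimal one. For $\fsl_2$ this is essentially Chuang--Rouquier's analysis of the structure of $\fsl_2$-categorifications (every object is built from highest-weight objects via divided powers), but for general $\fg$ one needs the analogous statement that an isotypic $U_q(\fg)$-categorification is generated, in a suitably controlled way, from its highest weight category --- this likely requires the theory of categorical actions and their behaviour under the Chevalley functors, possibly invoking a uniqueness-of-categorification result, and care is needed because $\cC$ is only assumed abelian (not, say, highest-weight or graded with finite-dimensional homs). An alternative route that sidesteps some of this is to prove t-exactness up to shift directly by induction on the height of $\mu - w_0(\lambda)$, peeling off one Rickard complex $\Theta_i$ at a time and using that $\Theta_i$ on an isotypic categorification, restricted to a single $i$-string through the weight lattice, is governed by an $\fsl_2$-isotypic categorification (to which \cite[Theorem 6.6]{CR} applies directly); the subtlety there is that $\cC_\mu$ decomposes into $i$-strings only after passing to a Serre quotient / subquotient, so one must check the braid group action and t-structure are compatible with that decomposition. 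Either way, the heart of the argument is the $\fsl_2$ reduction, and the combinatorial identity for the shift $n$ is routine once the reduction is in place.
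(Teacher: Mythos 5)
Your first route breaks at the step you treat as routine: it is not true that the intermediate weight categories $\cC^{\mathrm{min}}_\nu$ appearing along a reduced word for $w_0$ are minimal $\fsl_2$-categorifications in the $i_k$-direction. Restricting $L(\lambda)$ to the $\fsl_2$ at a node $i$ gives, in general, a direct sum of many $\fsl_2$-simples with a spread of highest weights, so $\cL(\lambda)$ restricted to that node is not even an \emph{isotypic} $\fsl_2$-categorification, let alone minimal. Consequently one cannot invoke Chuang--Rouquier's \cite[Theorem 6.6]{CR} factor by factor along $\Theta_{w_0}=\Theta_{i_1}\cdots\Theta_{i_N}$, and there is no choice of reduced word that repairs this in general. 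This is precisely why the theorem is genuinely harder for rank $>1$ and is the technical heart of the paper; the factor-by-factor $\fsl_2$ reduction you propose is a false premise.

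The paper's argument does not peel off Rickard complexes from $\Theta_{w_0}$ but instead commutes $\Theta_{w_0}$ \emph{past the Chevalley functors}. The key identity is Proposition \ref{prop:mainrel},
$\Theta_{w_0}\sE_i\bone_\lambda \cong \sF_{\tau(i)}\Theta_{w_0}\bone_\lambda[1]\langle\lambda_i\rangle$,
whose proof (via the ``marked words'' formalism of Section \ref{sec:commrels} and Cautis's $\fsl_2$-relations, Lemmas \ref{lem:Cau0}, \ref{lem:Cau}) is the missing ingredient in your sketch. Given this relation, the proof of Theorem \ref{thm:mincat} is a clean induction on $n=\mathrm{ht}(\mu-w_0(\lambda))$: it shows $\Theta_{w_0}\bone_\mu[-n]$ sends $\sE_{i_1}\cdots\sE_{i_\ell}(\kk_{low})$ to an object of the form $\sF_{j_1}\cdots\sF_{j_\ell}(\kk_{high})\langle k\rangle$, hence takes projectives to projectives, hence is t-exact. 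The base case (Proposition \ref{prop:lowestwt}: $\Theta_{w_0}(\kk_{low})\cong\kk_{high}$) is where \cite[Theorem 6.6]{CR} actually enters, but only in rank one, together with the existence of strongly equivariant morphisms from $\fsl_2$ minimal categorifications (\cite[Theorem 5.24]{CR}) and a vanishing lemma (Lemma \ref{lem:techbit}) that ensures each truncated composite stays in homological degree zero. Your second, vaguely outlined alternative (induct on height, ``peel off one $\Theta_i$ at a time'') is closer in spirit, but without the commutation relation you have no mechanism to control the effect of the remaining $\Theta$'s, and the concern you raise about Serre subquotients and $i$-strings does not in fact arise in the correct argument.

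Your reduction of the isotypic case to the minimal one is essentially right: the paper's Lemma \ref{lem:isocat} shows $\cC\cong\cL(\lambda)\otimes_\kk\cA$ using Rouquier's Jordan--H\"older theory for additive $2$-representations and Morita theory, and then Corollary \ref{cor:mainthm} is immediate since $\Theta_{w_0}$ acts as $\Theta_{w_0}^{\min}\otimes\Id$. So the ``descent'' step you flagged as the main obstacle is actually the easy part; the real work is the commutation relation $\Theta_{w_0}\sE_i\cong\sF_{\tau(i)}\Theta_{w_0}[1]\langle\cdot\rangle$, which your proposal does not anticipate.
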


This theorem is the technical heart of the paper.
In order to prove it we introduce a new combinatorial notion of ``marked words'' (Section \ref{sec:commrels}).  This allows us to use relations between $\Theta_i$ and Chevalley functors established by Cautis and Kamnitzer to deduce  the commutation relations involving $\Theta_{w_0}$ (Proposition \ref{prop:mainrel}).  We then use these relations to prove the theorem by induction on $n$.     

Our second theorem describes $\Theta_{w_0}$ on an arbitrary categorical representation of $U_q(\fg)$, also generalising a result of Chuang-Rouquier in the case $\fg=\fsl_2$.  Indeed, their study of the Rickard complex on an $\fsl_2$ categorification led them to define the notion of a ``perverse equivalence'' \cite{CRperv}.  

Consider an equivalence of triangulated categories $\sF:\cT\to\cT'$ with t-structures \cite{BBD}.  Suppose further that $\cT$ (respectively $\cT'$) is filtered by thick triangulated subcategories 
\begin{align*}
0\subset \cT_0 \subset \cdots \subset \cT_r=\cT, \quad 0\subset \cT_0' \subset \cdots \subset \cT_r'=\cT',
\end{align*}
and $\sF$ is compatible with these filtrations (cf. Section \ref{sec:pervdef} for precise definitions).  Then, roughly speaking,  $\sF$ is a perverse equivalence if on each subquotient $\sF:\cT_i/\cT_{i-1} \to \cT_i'/\cT_{i-1}'$ is t-exact up to shift.  

Since their introduction, perverse equivalences have proven useful in various contexts (e.g. representations of finite groups \cite{Cra-Rou}, geometric representation theory and mirror symmetry \cite{Agan}, and algebraic combinatorics \cite{GJN}).  Our second theorem shows that perverse equivalences are ubiquitous in categorical representation theory:

\begin{namedtheorem}[B][Theorem \ref{thm:perveq}]\label{thm:B}
Let $\cC$ be a categorical representation of $U_q(\fg)$, and let $\mu$ be any weight.  The derived equivalence
$
\Theta_{w_0}\bone_\mu:D^b(\cC_\mu) \to D^b(\cC_{w_0(\mu)})
$
is a perverse equivalence with respect to a Jordan-H\"older or isotypic filtration of $\cC$.  
\end{namedtheorem}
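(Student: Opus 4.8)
The plan is to deduce Theorem~\ref{thm:B} from Theorem~\ref{thm:A} together with the structure theory of categorical representations, which is what supplies the Jordan--Hölder filtration appearing in the statement. Recall that a categorical representation $\cC$ of $U_q(\fg)$ admits a finite filtration $0 = \cC^0 \subset \cC^1 \subset \cdots \subset \cC^r = \cC$ by categorical subrepresentations, i.e.\ Serre subcategories preserved by the divided-power Chevalley functors $E^{(a)}, F^{(b)}$ together with their biadjunction data, such that each subquotient $\ol\cC^i := \cC^i/\cC^{i-1}$ is a categorical representation categorifying an isotypic representation, say of type $\lambda_i$ for a dominant integral weight $\lambda_i$; this is the Jordan--Hölder filtration of $\cC$. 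Passing to the $\mu$-weight space and taking bounded derived categories, we obtain a filtration of $D^b(\cC_\mu)$ by the thick triangulated subcategories $\cT^i := D^b_{\cC^i}(\cC_\mu)$ of complexes whose cohomology lies in $\cC^i_\mu$, with $\cT^i/\cT^{i-1} \simeq D^b(\ol\cC^i_\mu)$, and likewise a filtration $(\cT')^i := D^b_{\cC^i}(\cC_{w_0(\mu)})$ of the target $D^b(\cC_{w_0(\mu)})$.

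\textbf{Compatibility of $\Theta_{w_0}$ with the filtration.} Because the Rickard complex is assembled functorially from the functors $E^{(a)}, F^{(b)}$ and the $2$-morphisms of the categorical action, and these functors preserve every $\cC^i$, the equivalence $\sF := \Theta_{w_0}\bone_\mu$ and its quasi-inverse (itself built from inverse Rickard complexes) preserve the subcategories in question, so that $\sF$ restricts to equivalences $\cT^i \xrightarrow{\sim} (\cT')^i$. Moreover $\Theta_i$ descends through the Serre quotient to the Rickard complex of the induced categorical representation on $\ol\cC^i$, so the induced functor $\cT^i/\cT^{i-1} \to (\cT')^i/(\cT')^{i-1}$ is, under the identifications above, the equivalence $\Theta_{w_0}\bone_\mu : D^b(\ol\cC^i_\mu) \to D^b(\ol\cC^i_{w_0(\mu)})$ attached to the isotypic categorification $\ol\cC^i$. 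Here one uses that $\Theta_{w_0}$ intertwines the categorical $U_q(\fg)$-action up to the twist of weights by $w_0$, so that it matches the type-$\lambda_i$ subquotient at weight $\mu$ with the type-$\lambda_i$ subquotient at weight $w_0(\mu)$.

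\textbf{Conclusion via Theorem~A.} Since $\ol\cC^i$ categorifies an isotypic representation of type $\lambda_i$, Theorem~\ref{thm:A} gives that $\Theta_{w_0}\bone_\mu[n_i]$ is t-exact on $D^b(\ol\cC^i_\mu)$, where $n_i$ is the height of $\mu - w_0(\lambda_i)$. Thus $\sF$ is t-exact up to the shift $n_i$ on the $i$-th subquotient of the filtration, which is exactly the condition (Section~\ref{sec:pervdef}) for $\sF$ to be a perverse equivalence with respect to the filtrations $(\cT^i)$ and $((\cT')^i)$ and the perversity function $i \mapsto n_i$. Assembling these subquotient statements and unwinding the definition yields Theorem~\ref{thm:B}.

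\textbf{Main obstacle.} The delicate point is the compatibility claim of the second paragraph: one must verify carefully that the Rickard complex --- a bounded complex of functors rather than a single functor --- both preserves the derived subcategories $\cT^i$ attached to the Serre subcategories $\cC^i$ and is compatible with the quotient functors realising the subquotients $\ol\cC^i$, so that the functor it induces on $D^b(\ol\cC^i_\mu)$ is genuinely the Rickard complex of the induced categorical representation. This hinges on the Jordan--Hölder filtration being chosen so that the inclusions $\cC^{i-1}\subset\cC^i$ and the quotients $\cC^i\onto\ol\cC^i$ behave well on bounded derived categories. Once this is in place, the remainder is bookkeeping with heights and with the precise form of the definition of a perverse equivalence.
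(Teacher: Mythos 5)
Your proposal is correct and follows essentially the same route as the paper's proof of Theorem~\ref{thm:perveq}: take the Jordan--H\"older filtration of Theorem~\ref{thm:JH}, note that the terms of $\Theta_{w_0}\bone_\mu$ preserve each $\cC_i$ since it is a subrepresentation (so by the elementary Lemma~\ref{lem:trianginv} the derived functor preserves the thick subcategories $D^b_{\cC_i}(\cC_\mu)$), and then apply Corollary~\ref{cor:mainthm} on each subquotient with perversity $p(i)=ht(\mu-w_0(\lambda_i))$. The compatibility you flag as the main obstacle is in fact immediate from the definition of a categorical subrepresentation (stability under the Chevalley and divided-power functors), so no special choice of Jordan--H\"older filtration is required.
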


Note that if $J\subseteq I$ is a subdiagram, and $w_0^J$ is corresponding longest element, then this theorem implies that $\Theta_{w_0^J}\bone_\mu$ is a perverse equivalence for any $J$.  We also remark that our argument go through  in the ungraded setting, where $\cC$ is a categorical representation of $\fg$.  

Let us explain the filtration arising in Theorem B more precisely.  We apply Rouquier's Jordan-H\"older theory for representations of $2$-Kac-Moody algebras to our setting \cite{Rou2KM}.  We thus obtain a filtration of $\cC$ by Serre subcategories, 
\begin{align*}
0\subset \cC_0 \subset \cdots \subset \cC_r=\cC,
\end{align*}
such that each factor $\cC_i$ is a subrepresentation, and each subquotient $\cC_i/\cC_{i-1}$ categorifies either a simple module (Theorem \ref{thm:JH}) or an isotypic component (Remark \ref{rem:isofilts}).  Then   $
\Theta_{w_0}\bone_\mu$ is a perverse equivalence with respect to the filtration whose $i$-th filtered component consists of complexes in $D^b(\cC_\mu)$ with cohomology supported in $\cC_i$.

We remark that in the case $\fg=\fsl_2$ this gives a more conceptual proof of a result of Chuang-Rouquier \cite[Proposition 8.4]{CRperv}.  If $\cC$ is the tensor product categorification of the $n$-fold tensor product of the standard representation of $\fsl_n$, we recover a theorem of the third author \cite{LosCacti}.  We explain this in Example \ref{ex:catO}, where we show how to interpret the filtration on the principal block of the BGG category $\cO$ using the Robinson-Schensted correspondence. 

In fact, the third author and Bezrukavnikov formulated a principle that suitable categorical braid group representations should have a ``crystal limit'' \cite[Section 9]{BLet}.  As an application of our results we can make this precise in the setting of categorical representations of $U_q(\fg)$.

Recall that to an integrable representation $V$ of $U_q(\fg)$, Kashiwara associated its crystal basis $\bB_V$ \cite{Kash90}, which is closely related to Lusztig's canonical basis \cite{GL92}.  If $V$ is categorified by $\cC$ then there is a natural identification $\bB_V=\Irr(\cC)$, the set of isomorphism classes of simple objects in $\cC$ up to shift (cf. Proposition \ref{prop:cryst})	.

One of the most important features of the theory is the existence of a tensor product, endowing the category of crystals with a monoidal structure. The commutator of crystals is controlled by a group called the cactus group, just as $B$ controls the commutator in the category of representations of $U_q(\fg)$ \cite{HK06}.  There is also an internal cactus group action, mirroring Lusztig's internal braid group action on $V$.  Indeed, there is a cactus group $C$ associated to $\fg$ (or rather to its Dynkin diagram $I$), which can be presented by generators $c_J$ indexed by connected subdiagrams $J \subseteq I$ (cf. Section \ref{sec:cactus}).  Then $C$ acts on $\bB_V$ via the so-called Sch\"utzenberger involutions (cf. Theorem \ref{thm:cactact}).  

So, starting with an integrable representation $V$ of the quantum group we obtain:
an action of $B$ on V,
a $\fg$-crystal $\bB_V$, and
an action of $C$ on $\bB_V$.
We schematically picture this situation as follows:
\[
\begin{tikzcd}
U_q(\fg) \curvearrowright V  \ar[rr]  \ar[d] &  & B \curvearrowright V  \arrow[d, dashed,"?"] \\
\fg\text{-crystal } \bB_V \ar[rr]            &  & C       \curvearrowright    \bB_V        
\end{tikzcd}
\]
Naturally one asks: can we ``crystallise'' the braid group action on $V$ directly to obtain the cactus group action on $\bB_V$?  Our results allow us to answer this in the affirmative.    

The key point is that a perverse equivalence $\sF:\cT \to \cT'$ induces a bijection $\Irr(\cT^\heartsuit) \leftrightarrow \Irr((\cT')^\heartsuit)$, where $\cT^\heartsuit$ denotes the heart of the t-structure. In the setting of Theorem B, we obtain a bijection $\varphi_I:\Irr(\cC) \to \Irr(\cC)$.  In fact, if $J\subseteq I$ is a subdiagram and $\fg_J \subseteq \fg_I$ is the corresponding Lie subalgebra, we can regard $\cC$ as a categorical representation of $U_q(\fg_J)$ by restriction.  By Theorem B we also obtain a bijection $\varphi_J:\Irr(\cC) \to \Irr(\cC)$.

\begin{namedtheorem}[C][Theorem \ref{thm:cactus} \& Theorem \ref{thm:agree}]\label{thm:C}
Let $\cC$ be a categorical representation of $U_q(\fg)$, categorifying the integrable representation $V$.  The assignment $c_J \mapsto \varphi_J$ defines an action of $C$ on $\bB_V=\Irr(\cC)$, and this  agrees with the combinatorial action arising from Sch\"utzenberger involutions.  
\end{namedtheorem}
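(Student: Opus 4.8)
The plan is to deduce Theorem~C from the single statement that, for every connected subdiagram $J\subseteq I$, the bijection $\varphi_J$ coincides with the generalised Sch\"utzenberger involution $\xi_J$ of the $\fg_J$-crystal $\bB_V$ obtained by restricting the categorical action to $U_q(\fg_J)$. Granting this, Theorem~\ref{thm:agree} is immediate, and Theorem~\ref{thm:cactus} follows at once, since the operators $\xi_J$ are already known to satisfy the defining relations of the cactus group $C$ (Theorem~\ref{thm:cactact}); in particular one never verifies relations such as $\varphi_J^2=\Id$ or $\varphi_J\varphi_{J'}\varphi_J=\varphi_{\theta_J(J')}$ directly (although, as a remark, the relation $\varphi_J^2=\Id$ can also be obtained by combining Corollary~\ref{cor:mainthm} with Proposition~\ref{prop:mainrel}). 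Because $\varphi_J$ is built entirely from the restricted categorical $U_q(\fg_J)$-action, and $\bB_V$ with its $\fg_J$-crystal structure is the crystal of that restricted action (Proposition~\ref{prop:cryst}), one may fix $J$ and reduce to the case $J=I$: it suffices to prove $\varphi_I=\xi_I$.

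To do this I would first recall that, writing $\bB_V=\bigsqcup_k\bB(\lambda_k)$ for the decomposition of the $\fg$-crystal into connected components (so that the $V(\lambda_k)$ are the Jordan--H\"older constituents of $\cC$ and $\bB(\lambda_k)=\Irr(\cC_k/\cC_{k-1})$ under Theorem~\ref{thm:JH}), the involution $\xi_I$ is characterised as the unique bijection of $\bB_V$ which preserves each component, reverses weights ($\nu\mapsto w_0(\nu)$), and intertwines the Kashiwara operators by $\xi_I\tilde e_i=\tilde f_{i^\ast}\xi_I$ and $\xi_I\tilde f_i=\tilde e_{i^\ast}\xi_I$, where $i\mapsto i^\ast=-w_0(i)$; uniqueness holds because each $\bB(\lambda_k)$ is rigid. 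I would then check that $\varphi_I$ has these three properties. Weight reversal is automatic, since $\Theta_{w_0}\bone_\nu$ sends $D^b(\cC_\nu)$ to $D^b(\cC_{w_0(\nu)})$. Preservation of the components $\Irr(\cC_k/\cC_{k-1})$ holds because $\varphi_I$ is by definition the bijection induced by the \emph{perverse} equivalence $\Theta_{w_0}$ with respect to the Jordan--H\"older filtration (Theorem~\ref{thm:perveq}), and such a bijection respects the filtered subquotients. The remaining point --- intertwining with the Kashiwara operators --- is where Proposition~\ref{prop:mainrel} does the work: it supplies natural isomorphisms $\Theta_{w_0}\circ E_i^{(r)}\cong F_{i^\ast}^{(r)}\circ\Theta_{w_0}$ and $\Theta_{w_0}\circ F_i^{(r)}\cong E_{i^\ast}^{(r)}\circ\Theta_{w_0}$, up to homological and internal shift, which are the functor-level counterparts of precisely these crystal relations, with the automorphism $i\mapsto i^\ast$ already built in (conjugation by $\Theta_{w_0}$ carries $\Theta_i$ to $\Theta_{i^\ast}$).

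The step I expect to be the main obstacle is turning the functor isomorphisms of Proposition~\ref{prop:mainrel} into the crystal identity $\varphi_I\tilde e_i=\tilde f_{i^\ast}\varphi_I$: the bijection $\varphi_I$ is read off from the cohomology of $\Theta_{w_0}$ in its perversity degrees, whereas $\tilde e_i$ and $\tilde f_i$ are defined by purely abelian operations --- socles and heads of the divided powers $E_i^{(r)},F_i^{(r)}$ applied to simple objects of the heart (Proposition~\ref{prop:cryst}) --- so the two descriptions must be reconciled. My plan is to localise to a single subquotient: each $\cC_k/\cC_{k-1}$ is stable under the Chevalley functors (being a subrepresentation), so $E_i^{(r)},F_i^{(r)}$ descend to it, and $\Theta_{w_0}$ descends to an equivalence of \emph{abelian} categories after the appropriate shift (this is the content of Theorem~\ref{thm:perveq} on the subquotient, itself a case of Theorem~\ref{thm:mincat}/Corollary~\ref{cor:mainthm}, since the subquotient categorifies the isotypic module $V(\lambda_k)$); moreover $\varphi_I$ restricted to $\Irr(\cC_k/\cC_{k-1})$ is exactly the bijection induced by this abelian equivalence, and $\tilde e_i,\tilde f_i$ preserve this piece because the crystal decomposition $\bB_V=\bigsqcup_k\bB(\lambda_k)$ is compatible with the Jordan--H\"older filtration. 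On the subquotient the assertion reduces to the routine fact that an equivalence of abelian categories intertwining $E_i^{(r)}$ with $F_{i^\ast}^{(r)}$ induces an isomorphism of crystals intertwining $\tilde e_i$ with $\tilde f_{i^\ast}$ --- using that $\varepsilon_i$ is detected by vanishing of $E_i^{(r)}$ and that $E_i^{(\varepsilon_i)}$ of a simple has simple head. Assembling over $k$ yields $\varphi_I\tilde e_i=\tilde f_{i^\ast}\varphi_I$ on all of $\bB_V$, and, combined with weight reversal and component-preservation, the uniqueness above gives $\varphi_I=\xi_I$. Two subsidiary technical points will need care: the compatibility between the bijection attached to a perverse equivalence and the equivalences it induces on the subquotients (a general reconciliation within the formalism of Section~\ref{sec:pervdef}), and the consistent bookkeeping of the shifts and of the automorphism $i\mapsto i^\ast$ --- but the latter, as noted, is forced by Proposition~\ref{prop:mainrel} itself.
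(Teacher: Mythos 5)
Your proposal is correct, and it identifies a genuine logical shortcut the paper does not take. The paper proves Theorem~\ref{thm:cactus} and Theorem~\ref{thm:agree} independently: Theorem~\ref{thm:cactus} is established by verifying the three defining relations of $C$ directly for the maps $\varphi_J$ (using Lemma~\ref{lem:fulltwist} to compute $\st_{w_0}^2$ for relation (i), and the Jordan--H\"older filtration with Lemmas~\ref{lem:pervcomp}, \ref{lem:quot}, \ref{lem:texactperv} for relations (ii), (iii)); and then Theorem~\ref{thm:agree} is proved by exactly the argument you sketch --- show $\varphi_I(v_\lambda)=v_\lambda^{low}$ via Corollary~\ref{cor:mainthm}, show $\varphi_I\tilde e_i=\tilde f_{\tau(i)}\varphi_I$ via the commuting square coming from Proposition~\ref{prop:mainrel}, pass to socles using t-exactness of $\Theta_{w_0}[-n]$ on the simple subquotients, and bootstrap from simple to general categorifications via Theorem~\ref{thm:JH} and Lemma~\ref{lem:quot}. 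You observe, correctly, that once $\varphi_J=\xi_J$ is known for every $J$, Theorem~\ref{thm:cactus} is an immediate consequence of the combinatorial Theorem~\ref{thm:cactact}, so the direct verification of the cactus relations --- and in particular all of Lemma~\ref{lem:fulltwist} --- becomes unnecessary. Your characterisation of $\xi_I$ by component-preservation, weight reversal, and intertwining of Kashiwara operators, with uniqueness by rigidity of each $\bB(\lambda)$, is the right one, and your "main obstacle" (reconciling the homological definition of $\varphi_I$ with the abelian definition of $\tilde e_i,\tilde f_i$) is resolved in the paper exactly as you propose, by localising to a Jordan--H\"older subquotient where $\Theta_{w_0}$ acts as an honest abelian equivalence. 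The trade-off is purely expository: the paper's direct proof of Theorem~\ref{thm:cactus} exhibits each cactus relation as a categorical phenomenon (the full twist $\Theta_{w_0}^2$ acting as a shift, conjugation $\Theta_{w_0}^{-1}\Theta_{w_0^J}\Theta_{w_0}\cong\Theta_{w_0^{\tau(J)}}$, etc.), which is lost if one derives it from the crystal combinatorics, whereas your route is shorter and dispenses with Lemma~\ref{lem:fulltwist} entirely.
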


We thus obtain the sought-after crystalisation process for braid groups:
$$
B \curvearrowright V \quad\rightsquigarrow\quad B \curvearrowright D^b(\cC) \quad\rightsquigarrow\quad C \curvearrowright \Irr(\cC),
$$
which  associates a cactus group set $\Irr(\cC)$ to the braid group representation of $B$ on $V$.  The first appearance of such a crystalisation process is in the work of the third author, where a cactus group action on $W$ is constructed \cite{LosCacti}.  It's an interesting  question to crystallise the braid group action without appealing to categorical representation theory.

Finally we remark that perversity of Rickard complexes, and more specifically the t-exactness of $\Theta_{w_0}$ on isotypic categorifications as in Theorem A, is a fruitful vantage from which to view results in algebraic combinatorics.  

For example, we show in Section \ref{sec:typeA} how to use this to easily recover theorems of Berenstein-Zelevinsky \cite{BZ96}  and Stembridge \cite{Stem96}, namely that the action of $w_0\in S_n$ 
on the Kazhdan-Lusztig basis of a Specht module of $S_n$ is governed by the evacuation operator  on standard Young tableaux.  We note that this theorem was earlier proven by Mathas in slightly different form (without explicit reference to the evacuation operator, and credited to J.J. Graham) \cite[Theorem 3.1]{Mathas}, and a similar result was shown even earlier by Lusztig in 1990 \cite[Corollary 5.9]{LusCBII}.

As another example, we use our methods to also recover Rhoades' Theorem that the Coxeter element $(1,2,\hdots,n) \in S_n$ acts on the  Kazhdan-Lusztig basis of a Specht module associated to a rectangular partition by the promotion operator.  This point of view led us to generalise Rhoades' result to arbitrary partitions \cite{GY1}, and isolate the class of permutations (the separable permutations) for which such results can hold \cite{GY2}.

\section*{Acknowledgement}
We would like to thank Sabin Cautis, Ian Grojnowski, Joel Kamnitzer,  Aaron Lauda, Andrew Mathas, Peter McNamara, Bregje Pauwels, Raph\"ael Rouquier, and Geordie Williamson  for insightful discussions.  We are grateful to two anonymous referees for their helpful comments.
I.L. is partially supported by the NSF under grant DMS-2001139.
O.Y. is supported by the Australian Research Council Grants DP180102563 and DP230100654.

\section{Background on quantum groups}
\subsection{The quantum group} In this article we work with a  simply-laced quantum group $U_q(\fg)$ of finite type.  Recall that we have an associated Cartan datum and a root datum, which consists of:
\begin{itemize}
\item A finite set $I$,
\item a symmetric bilinear form $(\cdot,\cdot)$ on $\ZZ I$ satisfying $(i,i)=2$ and $(i,j)\in \{0,-1\}$ for all $i \neq j \in I$,
\item a free $\ZZ$-module $X$, called the weight lattice, and
\item a choice of simple roots $\{ \alpha_i \}_{i\in I} \subset X$ and simple coroots $\{ h_i \}_{i\in I} \subset X^\vee=\Hom(X,\ZZ)$ satisfying $\langle h_i,\alpha_j\rangle=(i,j)$, where $\langle \cdot,\cdot\rangle:X^\vee \times X \to \ZZ$ is the natural pairing.  
\end{itemize}

The quantum group $U_q(\fg)$ is the unital, associative, $\CC(q)$ algebra generated by $E_i,F_i,K_h, (i\in I, h\in X^\vee)$ subject to relations:
\begin{enumerate}
\item $K_0=1$ and $K_hK_{h'}=K_{h+h'}$ for any $h,h'\in X^\vee$,
\item $K_hE_i=q^{\langle h,\alpha_i \rangle}E_iK_h$ for any $i\in I, h\in X^\vee$,
\item $K_hF_i=q^{-\langle h,\alpha_i \rangle}F_iK_h$ for any $i\in I, h\in X^\vee$,
\item $E_iF_j-F_jE_i=\delta_{ij}\frac{K_i-K_i^{-1}}{q-q^{-1}}$, where we set $K_i=K_{h_i}$, and
\item for all $i\neq j$,
$$
\sum_{a+b=-\langle h_i,\alpha_j \rangle+1}(-1)^aE_i^{(a)}E_j E_i^{(b)}=0 \quad \text{ and } \quad \sum_{a+b=-\langle h_i,\alpha_j \rangle+1}(-1)^aF_i^{(a)}F_j F_i^{(b)}=0,
$$
where $E_i^{(a)}=E_i^a/[a]!, F_i^{(a)}=F_i^a/[a]!$, and $[a]!=\prod_{i=1}^a \frac{q^i-q^{-i}}{q-q^{-1}}$.
\end{enumerate}

We let $a_{ij}=(i,j)$, so that $(a_{ij})_{i,j\in I}$ is a Cartan matrix.  Given $\la \in X$ we abbreviate $\la_i=\langle h_i,\la\rangle$, and let $$X_+=\{\la\in X:\la_i\geq0 \text{ for all } i\in I\}$$ be the set of dominant weights.  

Let $R\subset X$ be the root lattice, defined as the $\ZZ$-span of the simple roots, and let $R_+\subset R$ be the $\NN$-span of the simple roots.  We define the usual preorder $\succ$ on $X$ by $\lambda \succeq \mu$ if $\lambda-\mu \in R_+$.
For $\mu \in R$ let $ht(\mu)$ denote the height of $\mu$, i.e. $ht(\sum_i a_i\alpha_i)=\sum_i a_i$.

When convenient, we also view $I$ as the Dynkin diagram of $\fg$, and make reference to subdiagrams or diagram automorphisms of $I$.

\subsection{Braid group actions on integrable representations of $U_q(\fg)$.}     
Given a $U_q(\fg)$-module $V$ and $\mu \in X$ we let $V_\mu$ denote the $\mu$ weight space of $V$.  
For $\lambda \in X_+$ we let $L(\lambda)$ be the irreducible representation of $U_q(\fg)$ of highest weight $\lambda$.  Let $\Iso_\lambda(V)$ denote the $\lambda$-isotypic component of $V$.  We say that $V$ is \textbf{isotypic} if there exists $\lambda \in X_+$ such that $V=\Iso_\lambda(V)$.

The representation $L(\lambda)$ has a canonical basis, which we denote by $\bB(\lambda)$ \cite{Lusbook}.  We let $v_\lambda$ (respectively $v_\la^{low}$) denote the unique highest weight (respectively lowest weight) element of $\bB(\la)$.

Let $B=B_I$ denote the braid group of type $I$, which is generated by $\theta_i\; (i\in I)$ subject to the braid relations:
\begin{align*}
    \theta_i\theta_j &=\theta_j\theta_i, \text{ if } (i,j)=0, \text{ and }\\
    \theta_i\theta_j\theta_i&=\theta_j\theta_i\theta_j, \text{ if } (i,j)=-1.
\end{align*}
Let $W=W_I$ be the Weyl group of type $I$, which has generators $s_i\; (i\in I)$ subject to the braid relations,  and in addition the quadratic relation $s_i^2=1$.  Let $w_0\in W$  be the longest element.  Recall that $W$ acts on $X$ via $s_i\cdot \la=\la-\langle h_i, \la\rangle\alpha_i$.  We define $\tau: I\to I$ by the equality $
\alpha_{\tau(i)}=-w_0(\alpha_i)$  for any $i \in I$.

To $J \subset I$ a subdiagram, we associate $W_J \subset W$  the parabolic subgroup, $w_0^J\in W_J$  its longest element, and $\tau_J:I\to I$  the bijection given by
\begin{align*}
\alpha_{\tau_J(i)}= \begin{cases} -w_0^J(\alpha_i) &\text{ if } i \in J,\\
\alpha_i &\text{otherwise.} \end{cases}
\end{align*}

For any $w\in W$ we can consider its positive lift $\theta_w\in B$, where $\theta_w=\theta_{i_1}\cdots \theta_{i_\ell}$ and $w=s_{i_1}\cdots s_{i_\ell}$ is any reduced decomposition.

Let $V$ be an integrable representation of $U_q(\fg)$.
A fundamental structure of $V$, discovered by Lusztig, is that it admits (several) braid group symmetries, sometimes referred to as the ``quantum Weyl group actions''.  To recall this, let $\sone_\mu$ denote the projection onto the $\mu$ weight space.  For each $i\in I$ we define $\st_i:V\to V$ by:
\begin{align}
    \st_i\sone_\mu=\sum_{b-a=(\mu,\alpha_i)}(-q)^{-b}E_i^{(a)}F_i^{(b)}\sone_\mu.
\end{align}
Note that the indexing set of this sum is infinite, but the sum itself is finite on $V$.
In the notation of \cite{Lusbook}, $\st_i=\st''_{i,-1}$.  (Note that the formula given in \cite{Lusbook} is more complicated. This simpler form was initially observed in \cite{CR} in the non-quantum setting, and generalised in \cite{CKM} to the quantum setting.). The assignment $\theta_i \mapsto \st_i$ defines an action of $B$ on $V$, and so we can unambiguously write $\st_w$ for any $w\in W$.  

\section{Categorical representations of $U_q(\fg)$}
%
\subsection{Notation}
Fix a field $\kk$ of any characteristic. In this paper we will be concerned mostly with abelian $\kk$-linear categories $\cC$.  We will assume throughout that each block of $\cC$ is a finite abelian category \cite[Definition 1.8.6]{EGNO}.

Recall that a category $\cA$ is graded  if it is equipped with an auto-equivalence $\langle 1 \rangle :\cA \to \cA$ called the ``shift functor''.   We let $\langle \ell \rangle$ be the auto-equivalence obtained by applying the shift functor $\ell$ times.  We denote by $\Irr(\cA)$ the set of equivalence classes of simple objects of $\cA$ up to shift.  
 
A functor $F:\cA \to \cA'$ between graded categories is graded if it commutes with the shift functors. We denote by $[\cA]_\ZZ$ the Grothendieck group of an abelian  category $\cA$.  If $\cA$ is graded we denote by $[\cA]_{\ZZ[q,q^{-1}]}$ the quotient of $[\cA]_\ZZ \otimes_\ZZ \ZZ[q,q^{-1}]$ by the additional relation $q[M]=[M\langle-1\rangle]$. This quotient is naturally a $\ZZ[q,q^{-1}]$-module. Set 
\begin{align*}
[\cA]_\CC &=[\cA]_\ZZ\otimes_{\ZZ}\CC, \\
[\cA]_{\CC(q)} &=[\cA]_{\ZZ[q,q^{-1}]}\otimes_{\ZZ[q,q^{-1}]}\CC(q).
\end{align*}

For a $\kk$-algebra $A$, we let $A\mmod$ be the category of finitely generated $A$-modules, and if $A$ is $\ZZ$-graded, we let $A\mmod_\ZZ$ be the category of finitely generated $\ZZ$-graded modules.  These are naturally $\kk$-linear abelian categories.  

\subsection{Definition}
In this section we introduce our main objects of study: representations of the 2-quantum group on abelian categories, which we refer to as “categorical representations” of $U_q(\fg)$.  This definition of the categorified quantum groups and their categorical actions is originally due to Rouquier and Khovanov-Lauda \cite{Rou2KM, KLI,KLII}.  

In the literature, there are a number of slightly different-looking examples of 2-representations, depending not just on whether or not one is working with the Khovanov-Lauda or Rouquier 2-categories, but also depending on whether or not one is interested in categorifications of representations of $U_q(\fg)$ or of $U(\fg)$.  At the categorical level, the difference between $U_q(\fg)$ or of $U(\fg)$ arises from grading considerations; in categorifications of representations of  $U_q(\fg)$, one works with categories enriched in graded vector spaces, whereas in categorifications of representations of  $U(\fg)$ no such grading is needed.  

Fortunately, the different notions of categorical representations - and in particular relationships between the Rouquier and Khovanov-Lauda frameworks in both graded and ungraded settings, have been brought in line by work of Cautis-Lauda, who proved that in the graded setting, integrable 2-representations of the Rouquier 2-category induce 2-representations of the Khovanov-Lauda 2-category \cite{CaLa}, and by work of Brundan \cite{Brundandef}, who proved that the underlying ungraded 2-categories of Rouquier and Khovanov-Lauda are equivalent.  

The important point for us is that all our theorems, including our main results (Theorem \ref{thm:mincat} and Theorem \ref{thm:perveq}), remain true in any 2-representation of the Khovanov-Lauda-Rouqier 2-category, in either the graded or ungraded setting.
Since the compatibility of the internal grading with the braid group is of some independent combinatorial interest, we elect to keep track of the gradings in the rest of the paper, and leave it to the reader to verify that the arguments go through while ignoring the gradings and working in the setup of, e.g. Brundan \cite{Brundandef}.  To that end, we have chosen to follow the notation and conventions of Cautis-Lauda below.

A \textbf{categorical representation} of $U_q(\fg)$ consists of the following data:
\begin{itemize}
\item A family of graded abelian $\kk$-linear categories $\cC_\mu$ indexed by $\mu \in X$.  We refer to each $\cC_\mu$ as a \textbf{weight category}. 
\item Exact graded functors $\sE_i\bone_\mu:\cC_\mu\to \cC_{\mu+\alpha_i}$ and $\sF_i\bone_\mu:\cC_\mu\to \cC_{\mu-\alpha_i}$, for $i\in I$ and $\mu\in X$. We refer to $\sE_i,\sF_i$ as \textbf{Chevalley functors}. 
\item A collection of natural transformations between these functors.  We won't be using directly these natural transformations in this work, so refer the reader to \cite[Definition 1.1]{CaLa} for their definition.
\end{itemize}
This data is subject to the conditions spelled out in items (1)-(5) of \cite[Definition 1.1]{CaLa}.  We only record those that are relevant for us:
\begin{enumerate}
\item The functors $\sE_i\bone_\mu$ and $\sF_i\bone_\mu$ are biadjoint up to a specified degree shift (see \eqref{eqs:biadjs} below).  
\item The powers of  $\sE_i$ carry an action of the KLR algebra associated to $Q$, where $Q$ denotes a choice of units $(t_{ij})_{i \neq j \in I}$ in $\mathbbm{k}^\times$.  These units satisfy some restrictions which are not relevant for us.  
\item We have the following isomorphisms:
\begin{align*}
\sF_i\sE_i\bone_\mu &\cong \sE_i\sF_i\bone_\mu\oplus_{[-\langle h_i,\mu \rangle]}\bone_\mu, \text{ if  }\langle h_i,\mu \rangle \leq 0, \\
\sE_i\sF_i\bone_\mu &\cong \sF_i\sE_i\bone_\mu\oplus_{[\langle h_i,\mu \rangle]}\bone_\mu, \text{ if  }\langle h_i,\mu \rangle \geq 0,\\
\sE_i\sF_j\bone_\mu &\cong \sF_j\sE_i\bone_\mu.
\end{align*}
This notation is explained as follows: for a Laurent polynomial $f=\sum f_a q^a$, $\oplus_f A$ is a direct sum over $a\in \ZZ$ of $f_a$ copies of $A\langle a \rangle$, and $[n]:=q^{n-1}+q^{n-3}+\cdots+q^{1-n}$.
\end{enumerate}

Usually we just say that $\cC=\bigoplus_\mu \cC_\mu$ is a categorical representation of $U_q(\fg)$ (the remaining data is implicit).   Given an integrable $U_q(\fg)$-module $V$, we say that $\cC$ is a categorification of $V$ if $\cC$ is a categorical representation of $U_q(\fg)$ such that $[\cC]_{\CC(q)} \cong V$ as $U_q(\fg)$-modules.

An \textbf{additive categorification} is a categorical representation on a graded additive $\kk$-linear category $\cV$ satisfying the same conditions as above, except the Chevalley functors are of course only required to be additive.  We let $\cV^i$ be the idempotent completion of $\cV$.

Given an abelian category $\cC$, we consider the additive category $\cC\proj$ defined as the full subcategory of projective objects in $\cC$.  Note that if $\cC$ is a categorical representation of $U_q(\fg)$, then $\cC\proj$ naturally inherits the structure of an additive categorification.

As a consequence of this definition, and in particular condition (2), there exist divided power functors $\sE_i^{(r)}\bone_\lambda \subset \sE_i^r\bone_\lambda, \sF_i^{(r)}\bone_\lambda \subset \sF_i^r\bone_\lambda$ which categorify the usual divided powers on the level of the quantum group.  Again, we refer the reader to \cite{CaLa} and references therein for further details.  We note that their adjoints are related as follows:
\begin{align}\label{eqs:biadjs}
(\sE_i^{(r)}\bone_\lambda)_R &\cong \sF_i^{(r)}\bone_{\lambda+r\alpha_i}\langle r(\lambda_i+r) \rangle,\\
(\sE_i^{(r)}\bone_\lambda)_L &\cong \sF_i^{(r)}\bone_{\lambda+r\alpha_i}\langle -r(\lambda_i+r) \rangle.
\end{align}

%
%

\subsection{Crystals}

We recall the definition of a crystal. 

\begin{Definition} [\cite{Kash90}] A \textbf{$\fg$-crystal} is a finite set $\bB$ together with maps:
	$$\tilde{e}_i,\tilde{f}_i:\bB \rightarrow \bB\sqcup \{0\}, \quad \eps_i,\phi_i : \bB\rightarrow \ZZ, \quad \wt: \bB \rightarrow X$$
	for all $i \in I$, such that:
	\begin{enumerate}
		\item for any $b,b' \in \bB$, $\tilde{e}_i(b)=b'$ if and only if $b=\tilde{f}_i(b')$ ,
		\item for all $b \in \bB$, if $\tilde{e}_i(b) \in \bB$ then $\wt(\tilde{e}_i(b))=\wt(b)+\alpha_i$, and if $\tilde{f}_i(b) \in \bB$ then $\wt(\tilde{f}_i(b))=\wt(b)-\alpha_i$,
		\item for all $b \in \bB$, $\eps_i(b)=\max\{n \in \ZZ~:~\tilde{e}^n_i(b) \neq 0\}$, $\phi_i(b)=\max\{n \in \ZZ~:~\tilde{f}^n_i(b)\neq 0\}$,
		\item for all $b \in \bB$, $\phi_i(b)-\eps_i(b)=\left\langle \wt(b),h_i\right\rangle$.
	\end{enumerate}
\end{Definition}

Any $U_q(\fg)$-representation $V$ has a corresponding $\fg$-crystal $\bB=\bB_V$.  The underlying set of $\bB$ is in natural bijection with a particular basis of $V$ (the ``global crystal basis'' or ``canonical basis'').  The maps $\tilde{e}_i,\tilde{f}_i$ are related to the raising and lowering Chevalley operators; vaguely speaking they encode information about the leading terms of the Chevalley operators acting on this basis. In particular, for an integral dominant weight $\la$, the canonical basis $\bB(\la)$ of $L(\la)$ carries a natural crystal structure \cite{GL92}.

The crystal  of $V$ naturally arises via categorical representation theory. Namely, as we describe in the next proposition, if $\cC$ is a categorification of $V$, then $\Irr(\cC)$ carries a crystal structure isomorphic to $\bB_V$. This follows  from \cite[Proposition 5.20]{CR} and \cite{LV}, and is explained in detail in \cite{BDcryst}.  

\begin{Proposition}(\cite[Theorem 4.31]{BDcryst})\label{prop:cryst}
	The set $\Irr(\cC)$ together with:
	\begin{itemize}
	 \item Kashiwara operators defined as $\widetilde{\sE}_i(X)=\operatorname{soc}{\sE_i(X)}$, $\widetilde{\sF}_i(X)=\operatorname{soc}{\sF_i(X)}$ for $X \in \Irr(\cC)$, 
	 \item $\wt(X) = \mu$ for $X \in \cC_\mu$, and
	 \item $\varepsilon(X)=max\{ n \;|\; \sE_i^n(X)\neq 0\}$, and $\varphi(X)=max\{ n \;|\; \sF_i^n(X)\neq 0\}$,
	 \end{itemize}
	  is a $\fg$-crystal isomorphic to the crystal $\bB=\bB_V$.
\end{Proposition}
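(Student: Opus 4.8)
The plan is to construct the crystal structure one simple root at a time, using the Chuang--Rouquier structure theory for $\fsl_2$-categorifications, and then to identify the result with $\bB_V$ by reducing to simple categorifications via the Jordan--H\"older filtration. First I would restrict: for each $i\in I$, the Chevalley functors $\sE_i,\sF_i$ together with the relevant $2$-morphisms make $\cC$ an $\fsl_2$-categorification in the sense of \cite{CR}. Their structure theory then gives that for a simple object $X\in\cC$ the object $\sE_iX$ (when nonzero) has simple socle and simple head, agreeing up to grading shift, and that $\varepsilon_i(X)=\max\{n:\sE_i^nX\neq 0\}$ is finite by the integrability condition in Definition \ref{def:catn}(1). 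Hence $\widetilde\sE_i,\widetilde\sF_i$ are well-defined maps $\Irr(\cC)\to\Irr(\cC)\sqcup\{0\}$, so the data in the proposition is well-formed.

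Next I would verify the crystal axioms. Axiom (2) is immediate since $\sE_i\bone_\mu$ lands in $\cC_{\mu+\alpha_i}$ and $\sF_i\bone_\mu$ in $\cC_{\mu-\alpha_i}$; axiom (3) is just the definition of $\varepsilon_i,\phi_i$. For axiom (1) I would use biadjunction of $\sE_i$ and $\sF_i$: if $Y=\operatorname{soc}\sE_iX$ then $\hom(Y,\sE_iX)\neq 0$, so $\hom(\sF_iY,X)\neq 0$ by adjunction (up to shift), whence $X$ is a quotient of $\sF_iY$; since the head and socle of $\sF_iY$ coincide by the $\fsl_2$ structure theory, $X=\operatorname{soc}\sF_iY=\widetilde\sF_iY$, and the converse implication is symmetric. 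For axiom (4) I would read the string length off the isomorphisms in Definition \ref{def:catn}(2): a simple $X$ lies on an $\fsl_2$-string whose combined length through $\sE_i^nX$ and $\sF_i^mX$ is controlled by $\langle\wt(X),h_i\rangle$, giving $\phi_i(X)-\varepsilon_i(X)=\langle\wt(X),h_i\rangle$ exactly as for the crystal of an integrable module restricted along the root $i$.

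Finally, to identify $\Irr(\cC)$ with $\bB_V$ I would use the Jordan--H\"older filtration $0=\cC_0\subset\cdots\subset\cC_r=\cC$ of Theorem \ref{thm:JH}, in which each $\cC_j$ is a subrepresentation (hence a Serre subcategory stable under all $\sE_i,\sF_i$) and $\cC_j/\cC_{j-1}\cong\cL(\la_j)\otimes_\kk\cA_j$. The simples of $\cC$ partition according to the first layer in which they occur, and since each $\cC_j$ is stable under the Chevalley functors the Kashiwara operators preserve this partition. On a layer $\cC_j/\cC_{j-1}$ the crystal is computed inside a simple categorification, where it is a disjoint union of copies of $\bB(\la_j)$, matching the $\la_j$-isotypic part of $\bB_V$ under the isomorphism $[\cC]_{\CC(q)}\cong V$; assembling these layers produces a crystal isomorphism $\Irr(\cC)\cong\bB_V$. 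The hard part will be Step three: showing the Kashiwara operators computed in $\cC$ agree with those computed layerwise in $\cC_j/\cC_{j-1}$, since quotient functors do not in general commute with socle. One resolves this by an induction on the length of a Jordan--H\"older filtration, using that $\cC_{j-1}$ is a subrepresentation (so no composition factor of $\sE_iX$ lying in $\cC_{j-1}$ can be its socle when $X$ is a layer-$j$ simple) together with the explicit structure of minimal categorifications. Alternatively one bypasses the filtration and invokes \cite[Proposition 5.20]{CR} plus the uniqueness of a compatible family of crystal operators lifting the $U_q(\fg)$-action on $[\cC]$, which is the route of \cite{BDcryst}.
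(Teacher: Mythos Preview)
The paper does not give its own proof of this proposition: it is quoted as \cite[Theorem 4.31]{BDcryst}, and the sentence preceding it says only that ``This follows from \cite[Proposition 5.20]{CR}, and is explained in detail in \cite{BDcryst}.'' There is therefore nothing in the paper to compare your argument against.

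That said, your sketch is in line with the cited sources and is essentially correct. The $\fsl_2$ structure theory from \cite[Proposition 5.20]{CR} is exactly what gives the simple socle/head statement you need, and your verification of axioms (1)--(4) is standard. Your worry in Step~3 is a genuine point, and your resolution is the right one: if $X$ is a simple first appearing in layer $j$ and $Y=\operatorname{soc}\sE_iX$ lay in $\cC_{j-1}$, then adjunction would give $\hom(\sF_iY,X)\neq 0$; but $\sF_iY\in\cC_{j-1}$ since $\cC_{j-1}$ is a subrepresentation, and $\cC_{j-1}$ is Serre, so $X$ could not be a quotient of $\sF_iY$ --- a contradiction. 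This shows the Kashiwara operators on $\Irr(\cC)$ agree with those computed layerwise, and the identification with $\bB_V$ then reduces to the minimal categorification, as you outline.
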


\subsection{Jordan-H\"older series}
A categorification of a simple representation (respectively an isotypic representation)  is called  a \textbf{simple categorification} (respectively an \textbf{isotypic categorification}).
There is a  distinguished  categorification of $L(\lambda)$ called the \textbf{minimal categorification} and denoted $\cL(\lambda)$ \cite{CR,Rou2KM,KLI,Webmerged,KK}. It  is characterized by the fact that $\cL(\lambda)_{\lambda}\cong \cL(\lambda)_{w_0(\lambda)} \cong \kk\mmod_{\ZZ}$.  We let $\kk_{low} \in \cL(\lambda)_{w_0(\lambda)}, \kk_{high}\in \cL(\lambda)_{\lambda}$ be the generators.  

%

The Jordan-H\"older Theorem for categorical representations will play an important role in our work.  
This was originally developed by Rouquier for additive categorifications \cite{Rou2KM}, and in this section we transfer these results to the abelian setting.
To set this up, recall that given finite abelian $\kk$-linear categories $\cA,\cB$ the \textbf{Deligne tensor product} $\cA \otimes_\kk \cB$  is universal for the functor assigning to every such abelian category $\cC$ the category of bilinear bifunctors $\cA \times \cB \to \cC$ right exact in both variables \cite[Definition 1.11.1]{EGNO}.  The tensor product is again a finite abelian $\kk$-linear category, and there is a bifunctor $\cA \times \cB \to \cA \otimes_\kk \cB, (X,Y) \mapsto X\otimes Y$.  This construction enjoys the following properties: 
\begin{enumerate}
\item The tensor product is unique up to unique equivalence,
\item for finite $\kk$-algebras $A,B$ we have that $(A\mmod)\otimes_\kk (B\mmod) \cong (A\otimes_\kk B)\mmod$, and
\item $\Hom_{\cA \otimes_\kk \cB}(X_1\otimes Y_1, X_2\otimes Y_2) \cong \Hom_\cA(X_1,Y_1)\otimes\Hom_\cB(X_2,Y_2)$.
\end{enumerate}

Let $\cC$ be a categorical representation, and let $\cA$ be a finite $\kk$-linear abelian category.  We can endow $\cC \otimes_\kk\cA$ with a structure of a categorical representation, by setting $(\cC \otimes_\kk\cA)_\mu=\cC_\mu \otimes_\kk\cA$, defining Chevalley functors $\sE_i\bone_\mu \otimes \bone_\cA$, etc.  If $\cC$ is a simple categorification, then clearly  $\cC \otimes_\kk\cA$ is an isotypic categorification.  Conversely, we have:  

\begin{Lemma}\label{lem:isocat}
Let $\cC$ be an isotypic categorification of type $\lambda \in X_+$.  Then there exists an abelian category $\cA$ such that $\cC \cong \cL(\lambda)\otimes_\kk \cA$.  
\end{Lemma}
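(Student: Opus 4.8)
The plan is to exhibit the category $\cA$ directly as a weight category of $\cC$ at an extremal weight, and then to build the equivalence $\cC \cong \cL(\lambda)\otimes_\kk\cA$ weight-space by weight-space. Concretely, set $\cA := \cC_\lambda$ (the highest weight space). Since $\cC$ categorifies $\Iso_\lambda(V) \cong L(\lambda)\otimes_{\CC(q)} U$ for some multiplicity space $U$, the expected statement is that $\cC_\mu \cong \cL(\lambda)_\mu \otimes_\kk \cA$ compatibly with the Chevalley functors. The first step is to recall from the structure theory of minimal categorifications (Chuang--Rouquier, Rouquier, Kang--Kashiwara) that $\cL(\lambda)$ is generated from its highest weight space $\cL(\lambda)_\lambda \cong \kk\mmod_\ZZ$ by applying divided power functors $\sF_i^{(r)}$, and that there is a clean description of $\cL(\lambda)_\mu$ together with its $\sE_i,\sF_i$ actions as (graded) modules over the relevant cyclotomic KLR algebra $R^\lambda$. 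Thus $\cL(\lambda)\otimes_\kk\cA$ has weight category $R^\lambda_\nu\mmod_\ZZ \otimes_\kk \cA \cong (R^\lambda_\nu \otimes_\kk \cA)\mmod$, using property (2) of the Deligne tensor product (where $\nu = \lambda - \mu \in R_+$ and $R^\lambda_\nu$ is the appropriate graded piece of the cyclotomic KLR algebra).

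The second step is to produce, for each $\mu$, an equivalence $\cC_\mu \cong \cL(\lambda)_\mu \otimes_\kk \cC_\lambda$. The key input is that in an integrable categorical representation the functors $\sF_i^{(r)}$ and their adjoints $\sE_i^{(r)}$ (with the shifts recorded in the excerpt) control everything: because $V$ is $\lambda$-isotypic, every weight space $V_\mu$ is generated from $V_\lambda$ by the divided-power lowering operators, and the higher Serre/commutation relations of Definition \ref{def:catn}(2),(3) are exactly what is needed to lift this to an equivalence of categories. More precisely, one shows that the functor $\sE$-``highest-weight-ization'' $\cC_\mu \to \cC_\lambda$ (a suitable composition of $\sE_i^{(r)}$'s landing in the highest weight space) and the corresponding composition of $\sF_i^{(r)}$'s back down are, up to the graded shifts and up to the KLR-module bookkeeping, mutually inverse on the appropriate summands. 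This is precisely the content of the classification of simple categorifications: a categorification of $L(\lambda)$ is equivalent to $\cL(\lambda)$, and the argument is ``natural'' enough in the highest weight space to carry along an arbitrary coefficient category $\cA = \cC_\lambda$. One then checks that these weight-wise equivalences intertwine the Chevalley functors on $\cC$ with $\sF_i\otimes\Id$, $\sE_i\otimes\Id$ on $\cL(\lambda)\otimes_\kk\cA$, again using Definition \ref{def:catn} together with the Deligne tensor product property (3) to compare Hom-spaces.

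The third step is to assemble the weight-wise equivalences into a single equivalence of categorical representations: one verifies compatibility with the KLR $2$-morphisms (condition (3)), which reduces to the statement that the KLR/cyclotomic-KLR action on $\bigoplus_\mu \cC_\mu$ is obtained from that on $\cL(\lambda)$ by base change along $\cA$ — this is formal once the functor-level intertwining is in place, since all the structure $2$-morphisms of $\cC$ restricted to the ``$\cL(\lambda)$-part'' are determined on the highest weight space.

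The main obstacle is the second step: rigorously identifying $\cC_\mu$ with $\cL(\lambda)_\mu\otimes_\kk\cC_\lambda$ as abelian categories with Chevalley functor actions. This is essentially a ``uniqueness of minimal categorification with coefficients'' statement, and the honest proof goes through the cyclotomic KLR algebra $R^\lambda$: one must show that the endomorphism algebra of a projective generator of $\cC_\mu$ is $R^\lambda_{\lambda-\mu}\otimes_\kk \End(\text{proj.\ gen.\ of }\cC_\lambda)$, using that $\cC$ is $\lambda$-isotypic to rule out contributions from lower-type simple subquotients. I would cite the relevant form of this from Rouquier \cite{Rou2KM} (or Kang--Kashiwara / Webster), adapted to allow the extra tensor factor $\cA$; the adaptation is routine but needs care with the grading shifts in equations (3.1)--(3.2) of the excerpt and with finiteness hypotheses on the blocks of $\cC$.
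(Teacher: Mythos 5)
Your plan chooses $\cA := \cC_\lambda$ up front and tries to verify the equivalence $\cC_\mu \cong \cL(\lambda)_\mu \otimes_\kk \cA$ weight space by weight space, by comparing endomorphism algebras of projective generators and then arguing compatibility with the Chevalley data. The paper's argument is genuinely different in structure and considerably more economical: it passes immediately to the additive category $\cC\proj$, invokes Rouquier's Jordan--H\"older theorem for \emph{additive} $2$-representations (\cite{Rou2KM}, Theorem 5.8) -- whose statement already \emph{produces} the coefficient category $\cM$ with $\cC\proj \cong (\cL(\lambda)\proj \otimes_\kk \cM)^i$, the filtration collapsing because in the isotypic case $(\cC\proj)^{lw}_{-\lambda}=(\cC\proj)_{-\lambda}$ -- and then recovers the abelian statement by a short Morita-theoretic step, setting $B=\End_\cC(\bigoplus P_i\otimes Q_j)^{op}$ and using that $B$ factors as a tensor product of endomorphism rings. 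So the paper never has to rebuild the tensor decomposition by hand; it reads it off from the additive theorem and transports it back to abelian categories.

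The gap in your plan is precisely the place you flag as the ``main obstacle'' and then describe as a ``routine adaptation.'' Establishing $\End(\text{proj.\ gen.\ of }\cC_\mu)\cong R^\lambda_{\lambda-\mu}\otimes_\kk \End(\text{proj.\ gen.\ of }\cC_\lambda)$ compatibly with the $2$-morphisms \emph{is} the substance of Rouquier's classification of simple (more generally, isotypic) additive $2$-representations; it is not something you can extract by composing divided power functors up to $\cC_\lambda$ and back, because that composition is not an equivalence (it factors through various direct summands determined by the KLR idempotents), and controlling those summands coherently across all weights and all $2$-morphisms is the hard part. Your plan in effect reproves the cited theorem rather than using it. The fix is exactly the paper's: cite the additive Jordan--H\"older theorem on $\cC\proj$, where the tensor-factor category $\cM$ comes as part of the conclusion, and then observe that the Deligne tensor product of finite abelian categories is compatible with passage between abelian categories and their projective generators (property (2) in the paper's list for $A\mmod\otimes_\kk B\mmod$), which supplies the missing descent from the additive to the abelian statement.
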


\begin{proof}
Since $\cC$ is an isotypic categorification, so is $\cC\proj$.  By Rouquier's Jordan-H\"older series for additive categorifications \cite[Theorem 5.8]{Rou2KM}, there exists an additive $\kk$-linear category $\cM$ such that 
\begin{align*}
\cC\proj\cong (\cL(\lambda)\proj \otimes_\kk \cM)^i.
\end{align*}
Note that no filtration appears here since, in the notation of \cite{Rou2KM}, $(\cC\proj)_{-\lambda}^{lw}=(\cC\proj)_{-\lambda}$.

Let $\{P_i\}$ (respectively $\{Q_j\}$) be a complete list of the projective indecomposable objects of $\cL(\lambda)$ (respectively $\cM$).  Let $P=\bigoplus_{i,j}P_i\otimes Q_j$, and let $B=\End_\cC(P)^{op}$.  By Morita theory, $B\mmod\cong \cC$.  On the other hand,
\begin{align*}
B\cong \End_{\cL(\lambda)}(\bigoplus_i P_i)^{op} \otimes \End_\cM(\bigoplus_j Q_j)^{op}.
\end{align*}
Since $\cL(\lambda)\cong \End_{\cL(\lambda)}(\bigoplus_i P_i)^{op}\mmod$ we have the desired result with $\cA=\End_\cM(\bigoplus_j Q_j)^{op}\mmod$.
\end{proof}

\begin{Theorem}\label{thm:JH}
Let $\cC$ be a categorical representation of $U_q(\fg)$.  Then there exists a filtration by Serre subcategories
\begin{align}\label{eq:JHfiltration}
0=\cC_0 \subset \cC_1\subset \cdots\subset \cC_n=\cC, 
\end{align}
such that for each $i$: $\cC_i$ is a subrepresentation of $\cC$, 
$\cC_{i}/\cC_{i-1}$ is a simple categorification of type $\lambda_i\in X_+$, and the list of highest weights is weakly increasing so that $\la_i \prec \la_j \Longrightarrow i<j$. 
\end{Theorem}

\begin{proof}
The $\fg$-crystal $\Irr(\cC)$ is isomorphic to a finite direct sum of irreducible crystals $\bB(\la)$ for various $\la$, i.e. we have an isomorphism
$$
\Irr(\cC) \cong \bigoplus_{\la\in X_+} \bB(\la)^{\oplus m_\la},
$$
where $m_\la \geq 0$ and only finitely many are nonzero.

Define $M=\{ \la \in X_+ \;|\; m_\la\neq0\}$, and let $\la \in M$. We claim that there exists a highest weight simple object $L \in \cC_\la$.  Indeed, otherwise for any simple object $L\in \cC_\la$ there exists $i\in I$ such that $\sE_i(X) \neq 0$.  This implies that $\soc(\sE_i(X))\neq 0$, and by Proposition \ref{prop:cryst} we conclude that $\Irr(\cC)$ has no highest weight elements of weight $\la$,  a contradiction.  

Now take $\la \in M$  which is minimal with respect to $\preceq$, and let $L \in \cC_\la$ be a highest weight object.
 Let $\cC_1$ be the Serre subcategory of $\cC$ generated by objects
\begin{align}\label{eq:Serregens}
\{\sF_{i_1}\cdots \sF_{i_\ell}(L)\;|\; i_j\in I, \ell \geq 0\}.
\end{align}
By the exactness and bi-adjunction of the Chevalley functors, $\cC_1$ is  a subrepresentation of $\cC$.  Moreover it categorifies $L(\lambda)$.  Indeed, by our choice of $\lambda$ there cannot be any highest weight objects with weight $\prec\lambda$ occuring in $\cC_1$, and by construction the only simple object in  $(\cC_1)_\lambda$ is $L$.  

Next consider the categorical representation $\cC/\cC_1$ and repeat this construction.  This produces a Serre subcategory $\cC_2' \subset \cC/\cC_1$ which is again a simple categorification.  Let $\pi:\cC \to \cC/\cC_1$ be the natural quotient functor, and define $\cC_2=\pi^{-1}(\cC_2')$.  Clearly, we have that $\cC_1\subset \cC_2$, $\cC_2$ is Serre, it is a subrepresentation, and $\cC_2/\cC_1 \cong \cC_2'$ is a simple categorification.  

Iterating this process produces a filtration of $\cC$ such that each composition factor is a simple categorification, and the highest weights of the subquotients are weakly increasing.
\end{proof}

\begin{Remark}\label{rem:isofilts}
Note that the construction in the proof of Theorem \ref{thm:JH} can produce also an isotypic filtration with similar properties.  Namely, if $\la_1,\hdots,\la_N$ is a list of the \textit{distinct} isotypic types appearing in $[\cC]_{\CC(q)}$, and we choose any ordering of this list so that $\lambda_i \prec \lambda_j \Longrightarrow i <j$, then there is a filtration $0=\cC'_0 \subset \cC'_1\subset \cdots\subset \cC'_N=\cC$ such that $\cC'_{k}/\cC'_{k-1}$ categorifies the isotypic component of $[\cC]_{\CC(q)}$ of highest weight $\lambda_k$.  To construct this isotypic filtration, consider the  Jordan-H\"older filtration  from the theorem.  From the proof of Theorem \ref{thm:JH}, it's easy to see that one can ensure that the subquotients which categorify the same simple representations appear in sequence.  Assuming then that our Jordan-H\"older filtration satisfies this property, a coarsening of it is  the desired isotypic filtration.
\end{Remark}

\begin{Remark}\label{rem:JHfromCrystal}
Note that one can read off the isotypic filtration of $\cC$  from the crystal structure on $\Irr(\cC)$.  Indeed, suppose that $\Irr(\cC)$ decomposes into  components 
$$
\Irr(\cC)=\bX(\la_1) \sqcup \cdots \sqcup \bX(\la_N),
$$
where $\la_1,\hdots,\la_N \in X_+$ are distinct dominant integral weights, and $\bX(\la_i)$ is a disjoint union of copies of $\bB(\la_i)$.  Further, we arrange the weights  as above so that  $\la_i \prec \la_j \Longrightarrow i < j$.  Let $\cC_i$ be the Serre subcategory of $\cC$ generated by simple objects $L$ such that $[L] \in \bX(\la_j)$, where $j \leq i$.  Then it follows from Remark \ref{rem:isofilts} that $\{0\} \subset \cC_1 \subset \cC_2 \subset \cdots \subset \cC_N$ is an isotypic filtration of $\cC$.
\end{Remark}

\subsection{The categorical braid group action}
Let $\cC$ be a categorical representation of $U_q(\fg), \mu \in X$ and $i\in I$.     
We define a complex of functors $\Theta_i\bone_\mu$, supported in nonpositive cohomological degrees, where for $r\geq0$ the $-r$ component is
\begin{align*}
(\Theta_i\bone_\mu)^{-r}=\begin{cases}\sE_i^{(-\mu_i+r)}\sF_i^{(r)}\bone_\mu\langle -s \rangle &\text{ if } \mu_i\leq0, \\
\sF_i^{(\mu_i+r)}\sE_i^{(r)}\bone_\mu\langle -r \rangle &\text{ if } \mu_i\geq0.
\end{cases}
\end{align*}
The differential $d^r:(\Theta_i\bone_\mu)^{-r} \to (\Theta_i\bone_\mu)^{-r+1}$ is defined using the counits of the bi-adjunctions relating $\sE_i$ and $\sF_i$ (see \cite[Section 4]{Cauclasp} for details).  This produces a functor $\Theta_i\bone_\mu:D^b(\cC_\mu)\to D^b(\cC_{s_i(\mu)})$, which following Chuang and Rouquier we call the \textbf{Rickard complex}.

It's straightforward to verify that the Rickard complex $\Theta_i\bone_\mu$ categorifies Lusztig's braid group operators $\st_i\sone_\mu$ (\cite[Section 2]{Cauclasp}).
On the level of categories we have the following two theorems of Chuang-Rouquier and Cautis-Kamnitzer, which are the fundamental results about  Rickard complexes.  Note that the latter theorem was conjectured in \cite[Conjecture 5.19]{Rou2KM}.

\begin{Theorem}\cite[Theorem 6.4]{CR}\label{thm:CR}
For any $\mu,i$, $\Theta_i\bone_\mu:D^b(\cC_\mu)\to D^b(\cC_{s_i(\mu)})$ is an equivalence of triangulated categories.
\end{Theorem}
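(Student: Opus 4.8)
The plan is to deduce this from the invertibility of the Rickard complex on \emph{minimal} categorifications together with a d\'evissage. First I would note that $\Theta_i\bone_\mu$ is assembled entirely out of the divided-power Chevalley functors $\sE_i^{(a)},\sF_i^{(b)}$ attached to the single node $i$ and the counits of their bi-adjunctions; hence one may forget the rest of the $U_q(\fg)$-structure and treat $\cC=\bigoplus_\mu\cC_\mu$, regraded so that $\cC_\mu$ sits in $\fsl_2$-weight $n:=\mu_i$, as a categorical representation of $\fsl_2$. So from now on assume $\fg=\fsl_2$ and write $\Theta$, $\cC=\bigoplus_{n\in\ZZ}\cC_n$; note that by the integrability hypothesis (condition (1) of Definition \ref{def:catn}) $\Theta\bone_n$ is a \emph{bounded} complex of exact functors, hence a well-defined triangulated functor $D^b(\cC_n)\to D^b(\cC_{-n})$.

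Second, I would introduce the ``opposite'' Rickard complex $\bar\Theta\bone_n$, defined with the same terms but with differentials built from the \emph{units} of the bi-adjunctions (i.e.\ the complex categorifying $\st^{-1}$), and reduce the theorem to producing homotopy equivalences of complexes of functors
\[
\Theta\bar\Theta\bone_n \;\simeq\; \bone_n,\qquad \bar\Theta\Theta\bone_n\;\simeq\;\bone_n,
\]
since these immediately make $\Theta\bone_n$ and $\bar\Theta\bone_n$ mutually inverse on $D^b$. The total unit/counit maps assembling these composites are natural in $\cC$ and are compatible with any filtration of $\cC$ by Serre subcategories that are subrepresentations, because they are built from adjunctions of exact functors preserving those subcategories. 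Therefore, applying the Jordan--H\"older Theorem \ref{thm:JH} to the $\fsl_2$-categorification $\cC$ and running a five-lemma argument up the resulting filtration, it suffices to treat $\cC=\cL(N)\otimes_\kk\cA$; and since $\Theta$ then acts as $\Theta_{\cL(N)}\otimes\Id_\cA$, we are reduced to $\cC=\cL(N)$, the minimal categorification of the $(N+1)$-dimensional irreducible, for each $N\geq 0$.

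The remaining step, which is the genuinely computational one, is to establish the claim on $\cL(N)$. Here the weight categories are explicit module categories over cyclotomic nilHecke algebras, with $\cL(N)_{\pm N}\cong\kk\mmod_{\ZZ}$, and the KLR/nilHecke action on the powers of $\sE,\sF$ is known concretely. One computes the terms and differentials of $\Theta\bone_n$ directly and shows the complex is homotopy equivalent to a single term, namely a cohomological shift of the identity functor $\bone_{-n}$ (after an overall grading shift) --- this is precisely the t-exactness-up-to-shift statement recorded in the excerpt as \cite[Theorem 6.6]{CR}. From invertibility of $\Theta\bone_n$ on each $\cL(N)$ one reads off the two homotopy equivalences above (identifying $\bar\Theta\bone_{-n}$ as the explicit inverse), which closes the reduction. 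I expect this last step --- the nilHecke bookkeeping showing the Rickard complex on $\cL(N)$ collapses to a shifted identity --- to be the main obstacle; everything preceding it is formal.

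An alternative to the Jordan--H\"older d\'evissage, which avoids invoking Theorem \ref{thm:JH} altogether, is to regard $\Theta\bar\Theta\bone_n$ and $\bar\Theta\Theta\bone_n$ as bounded complexes of $1$-morphisms in the $2$-category $\cU(\fsl_2)$ (independent of $\cC$) and to use faithfulness of its action on the minimal categorifications --- non-degeneracy of the KLR form --- to transport the homotopy equivalences from the $\cL(N)$ back to $\cU(\fsl_2)$, hence to $\cC$. This is arguably cleaner but rests on more machinery.
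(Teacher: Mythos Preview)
The paper does not prove this theorem: it is quoted from \cite[Theorem 6.4]{CR} and used as a black box, so there is no proof here to compare against. Your outline is close in spirit to Chuang--Rouquier's original argument, with two differences. First, their candidate inverse is the termwise \emph{right adjoint} $\Theta'$ (its explicit form is recorded in this paper just after Theorem~\ref{thm:CK}), and they show the adjunction unit and counit are isomorphisms; your ``same terms, unit differentials'' complex $\bar\Theta$ is not quite this object, and once you invoke ``total unit/counit maps'' you are tacitly using the adjoint anyway, so it is cleaner to work with $\Theta'$ from the start. Second, and more substantively, their d\'evissage does not go through a Jordan--H\"older filtration of $\cC$: they reduce directly to simple objects $L$ and, for each such $L$, produce a morphism of $\fsl_2$-categorifications $R_L:\cL(m)\to\cC$ with $R_L(\kk_{low})=L$ (\cite[Theorem 5.24]{CR}, invoked in this paper in the proof of Proposition~\ref{prop:lowestwt}); naturality of the unit/counit under $R_L$ then transports invertibility from $\cL(m)$ to $\cC$. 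Your filtration route is not logically circular (the proof of Theorem~\ref{thm:JH} does not rely on Theorem~\ref{thm:CR}) and can be made to work, but it is anachronistic relative to \cite{CR}, and the five-lemma step needs more care than you indicate---one must check that the cone of the unit, computed in $D^b(\cC)$, has cohomology supported in successively smaller $\cC_i$ and that the induced transformation on each subquotient agrees with the intrinsic one there. The morphism-from-minimal argument bypasses this entirely. You correctly identify the computation on $\cL(N)$ (\cite[Theorem 6.6]{CR}) as the real content.
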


\begin{Theorem}\cite[Theorem 6.3]{CK3}\label{thm:CK}
The Rickard complexes satisfy the braid relations:
\begin{align*}
\Theta_i\Theta_j\bone_\mu &\cong \Theta_j\Theta_i\bone_\mu \text{ if } (i,j)=0,\\
\Theta_i\Theta_j\Theta_i\bone_\mu &\cong \Theta_j\Theta_i\Theta_j\bone_\mu \text{ if } (i,j)=-1,
\end{align*}
thereby defining a weak action of $B$ on $D^b(\cC)$.
\end{Theorem}

This action is ``weak'' since we don't make any claim on the canonicity of the functorial isomorphisms.  Nevertheless, for $w \in W$ we define $\Theta_w\bone_\mu := \Theta_{i_1}\circ\cdots\circ\Theta_{i_\ell}\bone_\mu$, where $w=s_{i_1}\cdots s_{i_\ell}$ is a reduced expression.  Thus  $\Theta_w\bone_\mu$ is defined up to isomorphism, but not canonical isomorphism.  Luckily, everything we do in this paper only requires $\Theta_w\bone_\mu$ to be defined up to isomorphism.

As a consequence of their proof of Theorem \ref{thm:CR}, Chuang and Rouquier show that the inverse  of $\Theta_i\bone_\mu$ is its right adjoint.  We denote this functor by $\Theta_i'\bone_{\mu}:D^b(\cC_\mu)\to D^b(\cC_{s_i(\mu)})$, so that $\Theta_i\Theta_i'\bone_\mu \cong \Theta'_i\Theta_i\bone_\mu \cong \bone_\mu$.  As a complex of functors, $\Theta_i'\bone_\mu$ is supported in nonnegative cohomological degrees, where for $s\geq0$ the $s$ component is
\begin{align*}
(\Theta_i'\bone_\mu)^s =\begin{cases}
\sE_i^{(s)}\sF_i^{(\mu_i+s)}\bone_\mu\langle s(-2\mu_i-2s+1) \rangle &\text{ if } \mu_i \geq 0, \\
\sF_i^{(s)}\sE_i^{(-\mu_i+s)}\bone_\mu\langle s(-2\mu_i+2s+1) \rangle&\text{ if } \mu_i \leq 0.
\end{cases}
\end{align*}

\section{Perverse equivalences}
\subsection{General definition}\label{sec:pervdef}
Let $\cT$ be a triangulated category with shift functor $[1]:\cT \rightarrow \cT$. In the cases of most interest to us, $\cT$ is a subcategory of a derived category, in which case $[1]$ is the homological shift functor.
Suppose $\cT$ has a t-structure $t=(\cT^{\leq0},\cT^{\geq0})$, with heart $\cT^\heartsuit=\cT^{\leq0} \cap \cT^{\geq0}$ \cite{BBD}.  Recall that a triangulated functor $F:\cT \to \cS$ between triangulated categories with $t$-structure is \textbf{$t$-exact} if $F(\cT^{\leq0})\subseteq \cS^{\leq0}$ and $F(\cT^{\geq0})\subseteq \cS^{\geq0}$.  We let $F[p]: \cT \to \cS$ denote the pre-composition of $F$ with the $p$-shift $[p]$.

Now let $\cS \subset \cT$ be a thick triangulated subcategory, and consider the quotient functor $Q:\cT \to \cT/\cS$.  Following \cite{CRperv}, we say that $t$ is \textbf{compatible} with $\cS$ if $t_{\cT/\cS}=(Q(\cT^{\leq0}),Q(\cT^{\geq0}))$ is a t-structure on $\cT/\cS$.  By \cite[Lemmas 3.3 \& 3.9]{CRperv}, if $t$ is compatible with $\cS$ then $(\cT/\cS)^\heartsuit=\cT^\heartsuit/\cT^\heartsuit\cap \cS$, and  $t_\cS=(\cS\cap\cT^{\leq0},\cS\cap\cT^{\geq0})$ is a t-structure on $\cS$ such that $\cS^\heartsuit=\cT^\heartsuit\cap \cS$.

Now suppose that $\cT,\cT'$ are two triangulated categories with t-structures $t,t'$.  Suppose further that we have filtrations by thick triangulated subcategories:
\begin{align*}
0 \subset \cT_0 \subset \cT_1\subset \cdots\subset \cT_r=\cT ,\quad 0 \subset \cT_0' \subset \cT_1'\subset \cdots\subset \cT_r'=\cT', 
\end{align*}
such that for every $i$, $t$ is compatible with $\cT_i$ and $t'$ is compatible with $\cT'_i$. By \cite[Lemma 3.11]{CRperv}, $t_{\cT_i}$ is also compatible with $\cT_{i-1}$, and hence $\cT_i/\cT_{i-1}$ inherits a natural t-structure.  Let $p: \{0,\hdots,r\} \rightarrow \ZZ$.  The data $(\cT_\bullet, \cT'_\bullet, p)$ is termed a \textbf{perversity triple}.  

Although Chuang and Rouquier didn't formulate perverse equivalences for graded categories, it is straightforward to extend their definitions to this setting.

\begin{Definition}\label{def:PE}
A graded equivalence of graded triangulated categories $\sF:\cT\to\cT'$ is a \textbf{(graded) perverse equivalence} with respect to $(\cT_\bullet, \cT'_\bullet, p)$ if for every $i$, 
\begin{enumerate}
\item $\sF(\cT_i)= \cT_i'$, and
\item the induced equivalence $\sF[-p(i)]:\cT_i/\cT_{i-1}\to \cT_i'/\cT_{i-1}'$ is t-exact.
\end{enumerate}
\end{Definition}
For brevity, we say $\sF$ is \textbf{perverse} if it is a graded perverse equivalence with respect to some perversity datum.  Since we will be working exclusively in the graded setting, a perverse equivalence for us will always mean a graded perverse equivalence.    

A perverse equivalence $\sF:\cT\to\cT'$  induces a bijection $\varphi_\sF:\Irr(\cT^\heartsuit) \to \Irr(\cT^{\prime\heartsuit})$.  Indeed, by (2) $\sF[-p(i)]$ induces a bijection $\Irr(\cT_i^\heartsuit)\setminus \Irr(\cT_{i-1}^\heartsuit) \to \Irr(\cT_i^{\prime\heartsuit})\setminus \Irr(\cT_{i-1}^{\prime\heartsuit})$, and these yield $\varphi_\sF$.

Although the construction of $\varphi_\sF$ depends on a choice of perversity triple, the resulting bijection does not when $\cT^\heartsuit,\cT^{\prime\heartsuit}$ have finitely many simple objects.  This follows from the following lemma.

\begin{Lemma}[\cite{LosCacti}, Lemma 2.4]\label{lem:LosLem}
 Suppose that $\cT^\heartsuit,\cT^{\prime\heartsuit}$ have finitely many simple objects, and for $i=1,2$ let $\sF_i:\cT \to \cT'$ be a  perverse equivalence with respect to the perversity datum $(\cT_{i,\bullet}, \cT'_{i,\bullet}, p_i)$.  If the induced maps $[\sF_1],[\sF_2]:[\cT]_\ZZ \to [\cT']_\ZZ$ coincide then $\varphi_{\sF_1}=\varphi_{\sF_2}$.
 \end{Lemma}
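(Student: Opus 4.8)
Following \cite{LosCacti}, the plan is to reduce the statement to an elementary fact about matrices over $\ZZ[q,q^{-1}]$, after first recording precisely what a perverse equivalence does on Grothendieck groups. Note that $\varphi_\sF$ depends on $[\sF]$ only through its reduction to the free $\ZZ[q,q^{-1}]$-module $[\cT]_{\ZZ[q,q^{-1}]}$ (which has $\Irr(\cT^\heartsuit)$ as a basis once a shift representative of each simple is fixed, and likewise on the $\cT'$ side), so the hypothesis $[\sF_1]=[\sF_2]$ furnishes a single $\ZZ[q,q^{-1}]$-linear isomorphism $M$, with fixed source and target bases, realised by both of the perverse equivalences $\sF_1,\sF_2$.

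First I would establish a normal form for $M=[\sF]$ attached to a perverse equivalence $\sF\colon\cT\to\cT'$ with respect to $(\cT_\bullet,\cT'_\bullet,p)$. Reindex so the filtration is strict, and put $d(S)=i$ when the simple $S\in\cT^\heartsuit$ lies in $\cT_i^\heartsuit$ but not in $\cT_{i-1}^\heartsuit$ (similarly $d'$ on $\cT'$). Since $t$ is compatible with every $\cT_i$, by \cite[Lemmas 3.3, 3.9, 3.11]{CRperv} one has $(\cT_i/\cT_{i-1})^\heartsuit=\cT_i^\heartsuit/(\cT_i^\heartsuit\cap\cT_{i-1})$, and $[\cT_{i-1}]_{\ZZ[q,q^{-1}]}$ sits inside $[\cT]_{\ZZ[q,q^{-1}]}$ as the span of $\{[S]:d(S)\le i-1\}$; and since a $t$-exact equivalence of triangulated categories restricts to an equivalence of hearts, hence carries simples bijectively to simples, applying this to $\sF[-p(i)]\colon\cT_i/\cT_{i-1}\to\cT'_i/\cT'_{i-1}$ gives $d'(\varphi_\sF(S))=d(S)$ together with
\[
[\sF(S)]=(-1)^{p(d(S))}q^{m(S)}[\varphi_\sF(S)]+\bigl(\text{a }\ZZ[q,q^{-1}]\text{-combination of the }[S']\text{ with }d'(S')<d(S)\bigr)
\]
for a suitable $m(S)\in\ZZ$. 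Next, picking inside each level an arbitrary full flag of Serre subcategories of the subquotient heart and transporting it through $\varphi_\sF$, one refines the filtration so that every subquotient heart has a single simple object, without altering $\varphi_\sF$ --- a $t$-exact equivalence is perverse, inducing the same bijection on simples, with respect to any filtration of its domain heart by Serre subcategories. After this refinement the levels totally order $\Irr(\cT^\heartsuit)$, and ordering $\Irr(\cT'^\heartsuit)$ correspondingly via $\varphi_\sF$, the displayed formula says exactly that $M$, written in these ordered bases, is upper triangular with diagonal entries in $\ZZ[q,q^{-1}]^\times=\{\pm q^n:n\in\ZZ\}$, and that $\varphi_\sF$ is the bijection pairing the $k$-th column with the $k$-th row.

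The combinatorial core is then: if an invertible matrix $M$ over $\ZZ[q,q^{-1}]$ admits some permutation of its rows and columns bringing it to upper triangular form with unit diagonal, the resulting column-to-row pairing does not depend on the permutation. I would prove this by induction on the size of $M$. The key point is that in any such presentation the last row has a single nonzero entry; more generally, if a row $T$ of $M$ has a unique nonzero entry, in column $S$, then every valid presentation puts $T$ and $S$ in the same position --- the diagonal entry there is nonzero and lies in row $T$, so it must be the entry in column $S$ --- whence the pairing sends $S\mapsto T$ no matter which presentation is used. Distinct single-entry rows have distinct supporting columns (else two rows of $M$ are proportional, contradicting invertibility), so one may delete all single-entry rows together with their supporting columns; the resulting matrix is strictly smaller, still invertible, still admits such a presentation, and any valid presentation of $M$ restricts to one of it inducing the restricted pairing. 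Induction completes the claim, and applying it to $M=[\sF_1]=[\sF_2]$ yields $\varphi_{\sF_1}=\varphi_{\sF_2}$.

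I expect the only genuine work to lie in the first step: checking in the graded setting that compatibility of the $t$-structure with the refined filtration makes the Grothendieck groups of the subcategories and subquotients behave as asserted, and that the filtration really can be refined down to singleton levels without disturbing $\varphi_\sF$. The linear-algebra induction is elementary, as is the passage from $[\cT]_\ZZ$ to $[\cT]_{\ZZ[q,q^{-1}]}$ (all choices involved being harmless up to the action of $\{\pm q^n:n\in\ZZ\}$ on rows and columns).
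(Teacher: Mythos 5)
Your argument is correct, and since the paper simply cites \cite{LosCacti}, Lemma~2.4 without reproducing a proof, there is no in-text argument to compare against; what you have written is, as far as I can tell, a faithful reconstruction of Losev's original argument. The structure --- pass to $[\cT]_{\ZZ[q,q^{-1}]}$, which is the quotient of $[\cT]_\ZZ$ with basis $\Irr(\cT^\heartsuit)$ once shift-representatives are chosen; refine each perversity filtration to one whose subquotient hearts have a single simple (using that a $t$-exact equivalence is perverse, with the identity bijection, for any Serre filtration, and transporting the refined flag across by $\varphi_\sF$); read off that $[\sF]$ is upper triangular with unit diagonal in the resulting ordered bases; and then prove uniqueness of the column-to-row pairing by repeatedly peeling off rows with a single nonzero entry --- is exactly the expected unipotent-triangularity argument. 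The only points that deserve one more sentence each in a fully written version are: (i) the refinement step requires noting that the thick subcategories obtained by pulling back a Serre filtration of the subquotient heart remain $t$-compatible, which follows from \cite[Lemmas 3.3, 3.9, 3.11]{CRperv}; (ii) the deletion step of the induction preserves the hypothesis because removing matched diagonal positions from an upper triangular matrix with unit diagonal yields again such a matrix, so the inductive call is legitimate; and (iii) for ``distinct single-entry rows have distinct supporting columns'' the clean justification is that two such rows supported in the same column yield a nonzero row vector in the left kernel of $M$, contradicting invertibility over $\ZZ[q,q^{-1}]$.
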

 
 \begin{Corollary}
 Suppose that $\cT^\heartsuit,\cT^{\prime\heartsuit}$ have finitely many simple objects, and  $\sF:\cT \to \cT'$ is perverse.  Then $\varphi_\sF$ is independent of the choice of perversity triple.
 \end{Corollary}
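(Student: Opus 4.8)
The plan is to deduce this directly from Lemma \ref{lem:LosLem}, which does essentially all the work. Suppose $\sF$ is simultaneously perverse with respect to two perversity triples, say $(\cT_{1,\bullet},\cT'_{1,\bullet},p_1)$ and $(\cT_{2,\bullet},\cT'_{2,\bullet},p_2)$. Write $\varphi_1$ and $\varphi_2$ for the two bijections $\Irr(\cT^\heartsuit)\to\Irr(\cT^{\prime\heartsuit})$ constructed from these triples via the procedure described in the paragraph preceding Lemma \ref{lem:LosLem} (at each level $i$, restrict the shifted equivalence to the subquotient and read off the induced bijection on simple objects). To invoke Lemma \ref{lem:LosLem} with $\sF_1=\sF_2=\sF$ I only need to verify its hypothesis that the induced maps $[\sF_1],[\sF_2]:[\cT]_\ZZ\to[\cT']_\ZZ$ coincide; but this is immediate, since $\sF_1$ and $\sF_2$ are literally the same triangulated functor $\sF$, so both induced maps equal $[\sF]$. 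The standing finiteness assumption on simple objects is precisely the hypothesis of Lemma \ref{lem:LosLem}, so the lemma applies and yields $\varphi_1=\varphi_2$.

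Since the two perversity triples were arbitrary, this shows that the bijection $\varphi_\sF$ does not depend on the choice used to construct it, which is exactly the assertion of the corollary. I do not anticipate any real obstacle: the entire content lies in Lemma \ref{lem:LosLem} (imported from \cite{LosCacti}), and the corollary is just its specialization to a single functor equipped with two different perversity structures. The only point worth making explicit in the write-up is the trivial observation that the equality ``$[\sF_1]=[\sF_2]$'' required by the lemma holds automatically in this setting because $\sF_1=\sF_2$ as functors, so no comparison of Grothendieck-group classes is actually needed.
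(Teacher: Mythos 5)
Your proof is correct and follows exactly the paper's own argument: both apply Lemma \ref{lem:LosLem} to the two perversity triples with $\sF_1=\sF_2=\sF$, noting that the Grothendieck-group hypothesis holds trivially. The only difference is that you spell out the trivial verification that $[\sF_1]=[\sF_2]$, which the paper leaves implicit.
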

 
 \begin{proof}
Suppose $\sF$ is a graded perverse equivalence with respect to two choices of perversity triples $(\cT_{i,\bullet}, \cT'_{i,\bullet}, p_i)$, $i=1,2$.  Now apply the lemma.
 \end{proof}
 
 The proofs of the following lemmas are straightforward.
 
 \begin{Lemma}\label{lem:pervshift}
 Suppose $\sF:\cT \rightarrow \cT'$ is perverse.  Then for any $\ell\in\ZZ$, $\sF[\ell]$ is also perverse and $\varphi_{\sF[\ell]}=\varphi_\sF$.
 \end{Lemma}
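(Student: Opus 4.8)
The plan is to witness the perversity of $\sF[\ell]$ using the very same filtrations that witness the perversity of $\sF$, adjusting only the perversity function by a global shift. Suppose $\sF$ is perverse with respect to the perversity triple $(\cT_\bullet,\cT'_\bullet,p)$, so that $\sF(\cT_i)=\cT'_i$ and $\sF[-p(i)]\colon \cT_i/\cT_{i-1}\to \cT'_i/\cT'_{i-1}$ is $t$-exact for every $i$. I would first record that each $\cT'_i$, being a triangulated subcategory, is stable under the shift $[\ell]$, and likewise each $\cT_i$; in particular $(\cT_\bullet,\cT'_\bullet,p+\ell)$ is again a valid perversity triple, where I write $(p+\ell)(i):=p(i)+\ell$. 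Then $\sF[\ell](\cT_i)=\sF(\cT_i)[\ell]=\cT'_i[\ell]=\cT'_i$, which is condition (1) of Definition \ref{def:PE} for this triple.

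For condition (2), the key observation is the identity of functors $(\sF[\ell])[-(p+\ell)(i)]=\sF[\ell-p(i)-\ell]=\sF[-p(i)]\colon \cT_i/\cT_{i-1}\to \cT'_i/\cT'_{i-1}$, which is $t$-exact by hypothesis. Hence $\sF[\ell]$ is a (graded) perverse equivalence with respect to $(\cT_\bullet,\cT'_\bullet,p+\ell)$.

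Finally, to see $\varphi_{\sF[\ell]}=\varphi_{\sF}$ I would unwind the construction of the induced bijection recalled just after Definition \ref{def:PE}: $\varphi_\sF$ is assembled from the bijections $\Irr(\cT_i^\heartsuit)\setminus\Irr(\cT_{i-1}^\heartsuit)\to \Irr(\cT_i'^\heartsuit)\setminus\Irr(\cT_{i-1}'^\heartsuit)$ induced on hearts by the $t$-exact equivalences $\sF[-p(i)]$, and the corresponding data for $\sF[\ell]$ (with the triple $(\cT_\bullet,\cT'_\bullet,p+\ell)$) comes from the equivalences $(\sF[\ell])[-(p+\ell)(i)]$, which by the computation above are literally the same functors $\sF[-p(i)]$. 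Therefore the induced maps on $\Irr$ agree at each graded piece, giving $\varphi_{\sF[\ell]}=\varphi_\sF$.

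There is no real obstacle here; the only point requiring (minor) care is the bookkeeping of cohomological shifts and the elementary fact that triangulated subcategories are shift-stable, so that the shifted triple is legitimate.
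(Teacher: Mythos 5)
Your proof is correct and is exactly the "straightforward" argument the paper has in mind (the paper omits the proof, stating only that it is straightforward). Using the same filtrations with the perversity function shifted by $\ell$, noting shift-invariance of triangulated subcategories, and observing that $(\sF[\ell])[-(p(i)+\ell)]$ is literally the functor $\sF[-p(i)]$ on each subquotient, is the natural and intended route; the conclusion $\varphi_{\sF[\ell]}=\varphi_\sF$ then follows since the same functors induce the same bijections on simples.
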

 
 \begin{Lemma}\label{lem:pervcomp}
 Suppose $\sF:\cT \rightarrow \cT'$ is a  perverse equivalence with respect to $(\cT_\bullet, \cT'_\bullet, p)$, and $\sG:\cT' \rightarrow \cT''$ is a  perverse equivalence with respect to $(\cT'_\bullet, \cT''_\bullet, q)$.  Then $\sG\circ \sF$ is a  perverse equivalence with respect to $(\cT_\bullet, \cT''_\bullet, p+q)$, and $\varphi_{\sG\circ \sF}=\varphi_\sG\circ\varphi_\sF$.
 \end{Lemma}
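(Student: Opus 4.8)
The plan is to check the two conditions of Definition~\ref{def:PE} for $\sG\circ\sF$ relative to the perversity triple $(\cT_\bullet,\cT''_\bullet,p+q)$, and then to identify the induced bijection on simples.

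Condition~(1) follows at once from the corresponding conditions for $\sF$ and $\sG$: for each $i$ we have $\sF(\cT_i)=\cT_i'$ and $\sG(\cT_i')=\cT_i''$, hence $(\sG\circ\sF)(\cT_i)=\cT_i''$. In particular $\sF$ restricts to $\cT_i\to\cT_i'$ and sends $\cT_{i-1}$ into $\cT_{i-1}'$, so it descends to an equivalence of Verdier quotients $\bar\sF_i\colon\cT_i/\cT_{i-1}\to\cT_i'/\cT_{i-1}'$; likewise $\sG$ descends to $\bar\sG_i\colon\cT_i'/\cT_{i-1}'\to\cT_i''/\cT_{i-1}''$, and the functor induced by $\sG\circ\sF$ on the $i$-th subquotient is $\bar\sG_i\circ\bar\sF_i$. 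Here each subquotient carries the t-structure inherited from $t$, $t'$, $t''$ as in the discussion following \cite[Lemma~3.11]{CRperv}; note that the inherited t-structure on $\cT_i'/\cT_{i-1}'$ is the one entering both the hypothesis on $\sF$ and the hypothesis on $\sG$, so there is no clash.

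For condition~(2), recall that the triangulated shift $[\ell]$ commutes with every triangulated functor, so on the $i$-th subquotient $(\sG\circ\sF)[-(p+q)(i)]\cong\bigl(\bar\sG_i[-q(i)]\bigr)\circ\bigl(\bar\sF_i[-p(i)]\bigr)$. By the perversity hypotheses $\bar\sF_i[-p(i)]$ and $\bar\sG_i[-q(i)]$ are $t$-exact, and a composition of $t$-exact functors is $t$-exact; hence $(\sG\circ\sF)[-(p+q)(i)]$ is $t$-exact. Thus $\sG\circ\sF$ is perverse with respect to $(\cT_\bullet,\cT''_\bullet,p+q)$.

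Finally, write $\varphi_\sF=\bigsqcup_i\varphi_\sF^{(i)}$ where $\varphi_\sF^{(i)}\colon\Irr(\cT_i^\heartsuit)\setminus\Irr(\cT_{i-1}^\heartsuit)\to\Irr(\cT_i^{\prime\heartsuit})\setminus\Irr(\cT_{i-1}^{\prime\heartsuit})$ is the bijection obtained by restricting the $t$-exact equivalence $\bar\sF_i[-p(i)]$ to the hearts of the subquotients, using $\Irr((\cT_i/\cT_{i-1})^\heartsuit)=\Irr(\cT_i^\heartsuit)\setminus\Irr(\cT_{i-1}^\heartsuit)$ from \cite[Lemmas~3.3 \& 3.9]{CRperv} (with the convention $\cT_{-1}=0$); likewise for $\varphi_\sG$ and $\varphi_{\sG\circ\sF}$. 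Restriction of a $t$-exact equivalence to hearts respects composition, so $\varphi_{\sG\circ\sF}^{(i)}=\varphi_\sG^{(i)}\circ\varphi_\sF^{(i)}$; and since $\varphi_\sF^{(i)}$ maps $\Irr(\cT_i^\heartsuit)\setminus\Irr(\cT_{i-1}^\heartsuit)$ onto $\Irr(\cT_i^{\prime\heartsuit})\setminus\Irr(\cT_{i-1}^{\prime\heartsuit})$, which is precisely the domain of $\varphi_\sG^{(i)}$, the pieces compose compatibly and assemble to $\varphi_{\sG\circ\sF}=\varphi_\sG\circ\varphi_\sF$. The two points needing a moment's care --- the shift bookkeeping $(\sG\circ\sF)[-(p+q)(i)]\cong\sG[-q(i)]\circ\sF[-p(i)]$, and the unambiguity of the inherited t-structure on each $\cT_i'/\cT_{i-1}'$ --- are routine, so there is no genuine obstacle.
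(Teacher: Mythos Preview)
Your proof is correct and is exactly the straightforward verification the paper has in mind; indeed, the paper does not write out a proof at all, simply declaring the lemma ``straightforward.'' Your argument --- checking condition~(1) by composing the filtration-preservation, checking condition~(2) by using $(\sG\circ\sF)[-(p+q)(i)]\cong\sG[-q(i)]\circ\sF[-p(i)]$ on subquotients, and assembling the bijection piecewise --- is the natural unpacking of that omitted proof.
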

 
\begin{Lemma}\label{lem:quot}
Let $\cT,\cT'$ be triangulated with t-structures $t,t'$, and let $\cS\subset \cT, \cS'\subset \cT'$ be thick triangulated subcategories such that $t$ is compatible with $\cS$ and $t'$ is compatible with $\cS'$.  Suppose further that $\sF:\cT\to\cT'$ is a perverse equivalence with respect to $(\cT_\bullet, \cT'_\bullet, p)$, and $\sF(\cS)=\cS'$.  

Define $\cS_i=\cS\cap \cT_i, \cS_i'=\cS'\cap \cT_i'$ and  $(\cT/\cS)_i=Q(\cT_i), (\cT'/\cS')_i=Q'(\cT_i')$.  Let 
$\sG:\cS\to\cS'$ and $\sH:\cT/\cS \to \cT'/\cS'$ be the induced equivalences.  Then:
\begin{enumerate}
\item $\sG$ is a perverse equivalence with respect to $(\cS_\bullet, \cS'_\bullet,p)$, and $\varphi_{\sG}=\varphi_{\sF}|_{\Irr{\cS^\heartsuit}}$.
\item $\sH$ is a perverse equivalence with respect to $((\cT/\cS)_\bullet,(\cT'/\cS')_\bullet,p)$, and
$
\varphi_{\sH}=\varphi_{\sF}|_{\Irr{\cT^\heartsuit} \setminus \Irr{\cS^\heartsuit}}.
$
\end{enumerate}
\end{Lemma}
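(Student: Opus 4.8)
The plan is to reduce everything to a few standard facts: that an equivalence preserves intersections and essential images, that $t$-exactness passes to a thick subcategory with the induced $t$-structure and descends along a $t$-exact essentially surjective quotient functor, and the explicit descriptions of compatible $t$-structures and their hearts in \cite[Lemmas 3.3, 3.9 and 3.11]{CRperv}. First I would verify that $\cS_\bullet$ and $(\cT/\cS)_\bullet$ are honest filtrations by thick triangulated subcategories with the induced $t$-structures compatible with every term. For $\cS$ this is because $\cS_i=\cS\cap\cT_i$ is an intersection of thick triangulated subcategories, and compatibility of $t_\cS$ with $\cS_i$ follows from compatibility of $t$ with $\cS$ and with $\cT_i$; dually $(\cT/\cS)_i=Q(\cT_i)$ is a thick triangulated subcategory of $\cT/\cS$ and $t_{\cT/\cS}$ is compatible with it. Consequently $\cS_i/\cS_{i-1}$ and $(\cT/\cS)_i/(\cT/\cS)_{i-1}$ each acquire a canonical $t$-structure.

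The one point that is not purely formal --- and the step I expect to be the main obstacle --- is to identify the subquotients of the new filtrations inside the subquotients $\cT_i/\cT_{i-1}$ of the old one, compatibly with $t$-structures. For the sub: the kernel of the composite $\cS_i\to\cT_i\to\cT_i/\cT_{i-1}$ is $\cS_i\cap\cT_{i-1}=\cS\cap\cT_i\cap\cT_{i-1}=\cS_{i-1}$, so the induced functor $\cS_i/\cS_{i-1}\to\cT_i/\cT_{i-1}$ is fully faithful and exhibits $\cS_i/\cS_{i-1}$ as a thick triangulated subcategory of $\cT_i/\cT_{i-1}$; one then checks, from the formulas for compatible $t$-structures, that the $t$-structure on $\cS_i/\cS_{i-1}$ coming from the $\cS_\bullet$-filtration coincides with the one induced from $\cT_i/\cT_{i-1}$. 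For the quotient: $Q$ induces an essentially surjective $t$-exact functor $\cT_i/\cT_{i-1}\to(\cT/\cS)_i/(\cT/\cS)_{i-1}$ with kernel $\cS_i/\cS_{i-1}$, which realises the target (with its filtration $t$-structure) as the Verdier quotient of $\cT_i/\cT_{i-1}$ by $\cS_i/\cS_{i-1}$. Each of these is an exercise in iterated Verdier quotients plus the compatibility lemmas of \cite{CRperv}; the work is in assembling them and keeping track of $t$-structures.

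Granting this, the perversity of $\sG$ and $\sH$ is formal. For condition (1) of Definition \ref{def:PE}: $\sF$ is an equivalence with $\sF(\cS)=\cS'$ and, by perversity of $\sF$, $\sF(\cT_i)=\cT_i'$, so $\sG(\cS_i)=\sF(\cS)\cap\sF(\cT_i)=\cS'\cap\cT_i'=\cS_i'$, while $\sH((\cT/\cS)_i)=Q'(\sF(\cT_i))=Q'(\cT_i')=(\cT'/\cS')_i$ (using $Q'\sF\cong\sH Q$). For condition (2): by the previous step $\sG[-p(i)]:\cS_i/\cS_{i-1}\to\cS_i'/\cS_{i-1}'$ is the restriction of the $t$-exact equivalence $\sF[-p(i)]:\cT_i/\cT_{i-1}\to\cT_i'/\cT_{i-1}'$ to a thick subcategory with the induced $t$-structure, hence $t$-exact; and $\sH[-p(i)]$ is the equivalence induced by $\sF[-p(i)]$ on Verdier quotients, hence $t$-exact because the quotient functors are $t$-exact and essentially surjective and the resulting square commutes. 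So $\sG$ and $\sH$ are perverse with respect to $(\cS_\bullet,\cS'_\bullet,p)$ and $((\cT/\cS)_\bullet,(\cT'/\cS')_\bullet,p)$ respectively.

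For the statement about simple objects, recall that $\varphi_\sF$ is the union over $i$ of the bijections $\Irr(\cT_i^\heartsuit)\setminus\Irr(\cT_{i-1}^\heartsuit)\to\Irr((\cT_i')^\heartsuit)\setminus\Irr((\cT_{i-1}')^\heartsuit)$ induced by $\sF[-p(i)]$ on subquotients, via the identifications $\Irr((\cT_i/\cT_{i-1})^\heartsuit)=\Irr(\cT_i^\heartsuit)\setminus\Irr(\cT_{i-1}^\heartsuit)$ of \cite[Lemma 3.9]{CRperv}. Compatibility gives $\cS^\heartsuit=\cT^\heartsuit\cap\cS$ as a Serre subcategory of $\cT^\heartsuit$, so $\Irr(\cS^\heartsuit)\subseteq\Irr(\cT^\heartsuit)$ and $\Irr((\cT/\cS)^\heartsuit)=\Irr(\cT^\heartsuit)\setminus\Irr(\cS^\heartsuit)$; under the identifications of the previous step, $\Irr((\cS_i/\cS_{i-1})^\heartsuit)$ corresponds to $(\Irr(\cT_i^\heartsuit)\setminus\Irr(\cT_{i-1}^\heartsuit))\cap\Irr(\cS^\heartsuit)$, and likewise for the quotient. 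Since $\sG[-p(i)]$, respectively $\sH[-p(i)]$, is the restriction, respectively the induced quotient functor, of $\sF[-p(i)]$, its bijection on subquotient simples is the restriction of that of $\sF[-p(i)]$; assembling over $i$ gives $\varphi_\sG=\varphi_\sF|_{\Irr(\cS^\heartsuit)}$ and $\varphi_\sH=\varphi_\sF|_{\Irr(\cT^\heartsuit)\setminus\Irr(\cS^\heartsuit)}$, as claimed.
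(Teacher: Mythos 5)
Since the paper only asserts that this lemma is ``straightforward'' and gives no proof, there is no paper argument to compare against; the question is whether your argument is correct. Your overall strategy is right and you correctly identify the hinge: relating the subquotient $\cS_i/\cS_{i-1}$ (resp. $(\cT/\cS)_i/(\cT/\cS)_{i-1}$) of the new filtration to the subquotient $\cT_i/\cT_{i-1}$ of the old one, compatibly with $t$-structures. The deduction of condition (1) of Definition \ref{def:PE} and the final bookkeeping with $\Irr$ via \cite[Lemma 3.9]{CRperv} are both fine.

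The weak point is that, having flagged the identification of subquotients as ``the step I expect to be the main obstacle,'' you then resolve it by assertion. In particular you claim the natural functor $\iota:\cS_i/\cS_{i-1}\to\cT_i/\cT_{i-1}$ is fully faithful and exhibits $\cS_i/\cS_{i-1}$ as a thick subcategory. For Verdier quotients of general triangulated categories this is not automatic from $\cS_i\cap\cT_{i-1}=\cS_{i-1}$ alone (a roof over $X,Y\in\cS_i$ whose denominator has cone in $\cT_{i-1}$ need not be replaceable by one with cone in $\cS_{i-1}$), and you never prove it. Fortunately, full faithfulness is more than you need. What the argument actually requires is that $\iota$ is $t$-exact and detects the $t$-structure, i.e.\ for $X\in\cS_i$, $Q_{\cS_{i-1}}(X)\in(\cS_i/\cS_{i-1})^{\leq 0}$ iff $Q_{\cT_{i-1}}(X)\in(\cT_i/\cT_{i-1})^{\leq 0}$ (and likewise for $\geq 0$). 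The forward direction is immediate from $\cS_i\cap\cT^{\leq 0}\subseteq\cT_i\cap\cT^{\leq 0}$. For the converse, use that compatibility makes the truncation functors of $t$ preserve both $\cS$ and $\cT_{i-1}$ and compute truncations on quotients: if $Q_{\cT_{i-1}}(X)$ is in $(\cT_i/\cT_{i-1})^{\leq 0}$ then $Q_{\cT_{i-1}}(\tau^{\geq 1}X)\cong\tau^{\geq 1}Q_{\cT_{i-1}}(X)=0$, so $\tau^{\geq 1}X\in\cT_{i-1}\cap\cS_i=\cS_{i-1}$ and $Q_{\cS_{i-1}}(X)\cong Q_{\cS_{i-1}}(\tau^{\leq 0}X)$. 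Given this detection, $t$-exactness of $\sG[-p(i)]$ follows from that of $\sF[-p(i)]$ on $\cT_i/\cT_{i-1}$ via the commuting square $\iota'\circ\sG[-p(i)]\cong\sF[-p(i)]\circ\iota$, and similarly for $\sH$ using the essentially surjective $t$-exact quotient functor $\cT_i/\cT_{i-1}\to(\cT/\cS)_i/(\cT/\cS)_{i-1}$. So your proof is correct in structure, but the reliance on full faithfulness should be replaced by the weaker (and provable) $t$-structure-detection claim; as written the argument leaves the genuinely nontrivial step unverified and asserts something stronger than you need.
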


\begin{Lemma}[\cite{LosCacti}, Lemma 2.4]\label{lem:texactperv}
Suppose $\sF:\cT \rightarrow \cT'$ is perverse, and $\sG$ (respectively $\sG'$) is an autoequivalence of $\cT$ (respectively $\cT'$) which is t-exact up to shift.  Then $\sG'\circ \sF \circ \sG$ is perverse, and $\varphi_{\sG'\circ \sF \circ \sG}=\varphi_{\sG'}\circ\varphi_{\sF}\circ\varphi_{\sG'}$.  
\end{Lemma}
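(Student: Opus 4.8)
The plan is to exhibit $\sG$ and $\sG'$ themselves as perverse equivalences, with respect to transported filtrations, and then to conclude by two applications of Lemma~\ref{lem:pervcomp}.

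Fix a perversity triple $(\cT_\bullet,\cT'_\bullet,p)$ witnessing that $\sF$ is perverse, and fix integers $m,m'$ such that $\sG[-m]$ is $t$-exact and $\sG'[-m']$ is $t'$-exact (this is the meaning of ``$t$-exact up to shift''). Put $\cU_i:=\sG^{-1}(\cT_i)$ and $\cU'_i:=\sG'(\cT'_i)$; since $\sG,\sG'$ are equivalences, these are filtrations of $\cT$ and $\cT'$ by thick triangulated subcategories with $\cU_r=\cT$ and $\cU'_r=\cT'$. The first thing to check is that $t$ is compatible with each $\cU_i$ and $t'$ with each $\cU'_i$ in the sense of Section~\ref{sec:pervdef}: since $\sG[-m]$ is a $t$-exact equivalence it satisfies $\sG[-m](\cT^{\leq0})=\cT^{\leq0}$ and $\sG[-m](\cT^{\geq0})=\cT^{\geq0}$, so the equivalence $\cT/\cU_i\xrightarrow{\ \sim\ }\cT/\cT_i$ induced by $\sG$ identifies the pair $(Q(\cT^{\leq0}),Q(\cT^{\geq0}))$ for $\cU_i$ with the analogous pair for $\cT_i$, which is a t-structure by hypothesis; the same argument applies to $\sG'$ and the $\cU'_i$.

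Next I would verify that $\sG\colon\cT\to\cT$ is perverse with respect to $(\cU_\bullet,\cT_\bullet,m)$, where $m$ denotes the constant perversity function. Condition~(1) of Definition~\ref{def:PE} is immediate since $\sG(\cU_i)=\cT_i$. For condition~(2), $\sG[-m]$ is $t$-exact on $\cT$ and sends $\cU_i\mapsto\cT_i$ and $\cU_{i-1}\mapsto\cT_{i-1}$, so it restricts to a $t$-exact equivalence $\cU_i\to\cT_i$ and hence descends to a $t$-exact equivalence on the subquotients $\cU_i/\cU_{i-1}\to\cT_i/\cT_{i-1}$ (the nonpositive, resp.\ nonnegative, part of the inherited t-structure on a subquotient is the image under the quotient functor of the corresponding part of $\cU_i$, resp.\ $\cT_i$, and these images correspond under $\sG[-m]$). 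The associated bijection $\varphi_\sG$ is the one induced by $\sG[-m]$ on $\Irr(\cT^\heartsuit)$. Symmetrically, $\sG'\colon\cT'\to\cT'$ is perverse with respect to $(\cT'_\bullet,\cU'_\bullet,m')$, with $\varphi_{\sG'}$ induced by $\sG'[-m']$ on $\Irr(\cT'^\heartsuit)$.

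Finally, Lemma~\ref{lem:pervcomp} applied to $\sG$ (triple $(\cU_\bullet,\cT_\bullet,m)$) and $\sF$ (triple $(\cT_\bullet,\cT'_\bullet,p)$) shows that $\sF\circ\sG$ is perverse with respect to $(\cU_\bullet,\cT'_\bullet,p+m)$ and $\varphi_{\sF\circ\sG}=\varphi_\sF\circ\varphi_\sG$; a second application, to $\sF\circ\sG$ and $\sG'$ (triple $(\cT'_\bullet,\cU'_\bullet,m')$), shows that $\sG'\circ\sF\circ\sG$ is perverse with respect to $(\cU_\bullet,\cU'_\bullet,p+m+m')$ and $\varphi_{\sG'\circ\sF\circ\sG}=\varphi_{\sG'}\circ\varphi_\sF\circ\varphi_\sG$, which is the assertion (the final $\varphi_{\sG'}$ in the displayed statement should read $\varphi_\sG$). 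The only non-routine point is the compatibility and subquotient-$t$-exactness statements of the middle two paragraphs; everything else is bookkeeping with Definition~\ref{def:PE} and Lemma~\ref{lem:pervcomp}. If one prefers not to track the shifts $m,m'$ through Lemma~\ref{lem:pervcomp}, one may first invoke Lemma~\ref{lem:pervshift} to replace $\sG,\sG'$ by the genuinely $t$-exact equivalences $\sG[-m],\sG'[-m']$, run the same argument with $m=m'=0$, and reinstate the shifts at the end; this changes nothing of substance.
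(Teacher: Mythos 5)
Your proof is correct, and it fills in an argument the paper delegates entirely to a citation of \cite[Lemma~2.4]{LosCacti} together with the remark that the ``up to shift'' version follows from Lemma~\ref{lem:pervshift}. The route you take --- transport the perversity filtration through $\sG$ and $\sG'$ to exhibit them as perverse autoequivalences with constant perversity functions $m$ and $m'$, then compose via Lemma~\ref{lem:pervcomp} --- is the natural one and is sound. The only point that genuinely needs care, namely that $t$ is compatible with each $\cU_i=\sG^{-1}(\cT_i)$ and that $\sG[-m]$ descends to a $t$-exact equivalence of subquotients, you handle correctly: since a $t$-exact triangulated equivalence has $t$-exact inverse, $\sG[-m]$ sends $\cT^{\leq 0}$ and $\cT^{\geq 0}$ onto themselves, and the induced equivalence $\cT/\cU_i\to\cT/\cT_i$ carries the candidate $t$-structure on the source to the one on the target, which is a $t$-structure by hypothesis. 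The bijection produced by the perversity triple $(\cU_\bullet,\cT_\bullet,m)$ for $\sG$ then coincides with the bijection $\Irr(\cT^\heartsuit)\to\Irr(\cT^\heartsuit)$ induced by the abelian autoequivalence $\sG[-m]$, which is what $\varphi_\sG$ means in the statement. Your final alternative (first apply Lemma~\ref{lem:pervshift} to reduce to honestly $t$-exact $\sG,\sG'$, then compose) is exactly the reduction the paper's remark has in mind. You are also right that the last factor in the displayed formula of the Lemma is a typo and should read $\varphi_\sG$.
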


Note that in \cite[Lemma 2.4]{LosCacti}  is stated for functors which are t-exact.  Our formulation for functors which are t-exact up to shift follows by Lemma \ref{lem:pervshift}.

\subsection{Derived categories of graded abelian categories}
We now specialise to the case of derived categories.  We recall that if $\cA$ is an abelian category, then the bounded derived category $D^b(\cA)$ has a standard t-structure $(D^b(\cA)^{\leq0},D^b(\cA)^{\geq0})$ whose heart is $\cA$.  

Given a $\cB \subset \cA$ a Serre subcategory, we let $D_\cB^b(\cA) \subset D^b(\cA)$ denote the thick subcategory consisting of complexes with cohomology supported in $\cB$. The category $D_\cB^b(\cA)$ inherits a natural t-structure from the standard t-structure on $D^b(\cA)$: $D_\cB^b(\cA)^{\leq0}=D_\cB^b(\cA) \cap D^b(\cA)^{\leq0}$ and $D_\cB^b(\cA)^{\geq0}=D_\cB^b(\cA) \cap D^b(\cA)^{\geq0}$.  The heart of the t-structure on 
$D_\cB^b(\cA)$ is $\cB$.  Moreover, if $\cC \subset \cB$ is another Serre subcategory then 
the $t$-structure on $D_\cB^b(\cA)$ is compatible with $D_\cC^b(\cA)$.  In particular, the quotient  $D_\cB^b(\cA)/D_\cC^b(\cA)$ inherits a natural t-structure whose heart is $\cB/\cC$.

For the remainder of this section let $\cA,\cA'$ be graded abelian categories.  In the setting of derived categories of graded abelian categories, a perverse equivalence can be packaged as follows.  We can encode a perversity triple $(\mathcal{A}_\bullet, \mathcal{A}'_\bullet, p)$ using filtrations on the abelian categories: $\cA_\bullet$ and $\cA'_\bullet$ are filtrations by shift-invariant Serre subcategories:
\begin{align*}
	0 = \mathcal{A}_{-1} \subset \mathcal{A}_0 \subset \mathcal{A}_1 \subset \hdots \subset \mathcal{A}_r=\mathcal{A}, \quad
	0 = \mathcal{A}'_{-1} \subset \mathcal{A}'_0 \subset \mathcal{A}'_1 \subset \hdots \subset \mathcal{A}'_r=\mathcal{A}'.
	\end{align*}
Then a graded equivalence $\sF:D^b(\mathcal{A}) \to D^b(\mathcal{A}')$ is a perverse with respect to $(\mathcal{A}_\bullet, \mathcal{A}'_\bullet, p)$ if conditions (1) and (2) of Definition \ref{def:PE} hold for $\cT_i=D^b_{\mathcal{A}_i}(\mathcal{A})$ and $\cT_i'=D^b_{\mathcal{A}'_i}(\mathcal{A}')$


 As above, a graded perverse equivalence $\sF:D^b(\mathcal{A}) \to D^b(\mathcal{A}')$ induces a bijection $\varphi_\sF:\Irr(\cA)\to\Irr(\cA')$.  
 
The following standard lemma will be useful in the proof of our main result.  

\begin{Lemma}\label{lem:trianginv}
Let $\cA,\cA'$ be abelian categories, and $\cB,\cB'$ Serre subcategories.  Let $a\leq b$ be integers, and $\sF_i:\cA \to \cA'$ be exact functors for $a \leq i \leq b$.  Suppose these functors  fit into a  complex $\sF=(\sF_a \to \sF_{a+1} \to \cdots \to \sF_{b} )$, defining a functor 
$$
\sF:D^b(\cA) \to D^b(\cA').
$$
If $\sF_i(\cB)\subset \cB'$ for all $a \leq i \leq b$, then $\sF(D_\cB^b(\cA))\subseteq D_{\cB'}^b(\cA')$.
\end{Lemma}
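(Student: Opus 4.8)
The plan is a routine dévissage. First I would recall that for a Serre subcategory $\cB \subseteq \cA$, the thick subcategory $D^b_\cB(\cA) \subseteq D^b(\cA)$ coincides with the triangulated subcategory generated by the objects of $\cB$ placed in cohomological degree $0$: given $M \in D^b_\cB(\cA)$ with nonzero cohomology precisely in degrees $[m,n]$, the standard truncation triangle
\[
\tau_{\le m}M \to M \to \tau_{\ge m+1}M \to \tau_{\le m}M[1]
\]
has left term $\tau_{\le m}M \cong H^m(M)[-m]$ with $H^m(M) \in \cB$, while $\tau_{\ge m+1}M$ again lies in $D^b_\cB(\cA)$ but has strictly shorter cohomological amplitude; induction on the amplitude then writes $M$ as an iterated cone of shifts of objects of $\cB$.

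Next I would record two formal facts. The complex of exact functors $\sF = (\sF_a \to \cdots \to \sF_b)$ induces a triangulated functor $\sF\colon D^b(\cA) \to D^b(\cA')$ — this is precisely where exactness of each $\sF_i$ is used, so that forming the total complex of $\sF_i(X^j)$ preserves quasi-isomorphisms and sends distinguished triangles to distinguished triangles — and $D^b_{\cB'}(\cA')$ is a triangulated subcategory of $D^b(\cA')$. It follows that the full subcategory
\[
\cD = \{\, X \in D^b(\cA) \mid \sF(X) \in D^b_{\cB'}(\cA') \,\}
\]
is a triangulated subcategory of $D^b(\cA)$: it is shift-stable since $\sF$ commutes with shifts and $D^b_{\cB'}(\cA')$ is shift-stable, and given a distinguished triangle $X \to Y \to Z \to X[1]$ with $X,Y \in \cD$, applying $\sF$ exhibits $\sF(Z)$ inside a distinguished triangle whose other two vertices lie in $D^b_{\cB'}(\cA')$, forcing $\sF(Z) \in D^b_{\cB'}(\cA')$.

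Combining the two steps, it suffices to check that $\sF(B) \in D^b_{\cB'}(\cA')$ for each $B \in \cB$ viewed as a complex concentrated in degree $0$. For such $B$ the object $\sF(B)$ is represented by the honest complex $(\sF_a(B) \to \cdots \to \sF_b(B))$, all of whose terms lie in $\cB'$ by hypothesis; since $\cB'$ is Serre it is closed under subobjects and quotients, so every cohomology object $H^i(\sF(B))$ — a subquotient of $\sF_i(B) \in \cB'$ — lies in $\cB'$, whence $\sF(B) \in D^b_{\cB'}(\cA')$. There is no genuine obstacle here: the only step with any content is the reduction to single objects $B \in \cB$, which rests on the dévissage of $D^b_\cB(\cA)$ and on $\sF$ being a triangulated functor; once one is reduced to $\sF(B)$, the conclusion is immediate from $\cB'$ being closed under subobjects and quotients.
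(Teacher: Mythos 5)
The paper does not actually supply a proof of this lemma; it labels it ``standard'' and immediately moves on, so there is no argument of the authors' to compare yours against. Your d\'evissage proof is the natural one and is correct: you reduce to a single object $B\in\cB$ in degree zero by generating $D^b_\cB(\cA)$ under shifts and cones using truncation triangles, observe that $\{X : \sF(X)\in D^b_{\cB'}(\cA')\}$ is triangulated because $\sF$ is a triangulated functor and $D^b_{\cB'}(\cA')$ is a triangulated subcategory, and then note that $\sF(B)$ is represented by the honest complex $\sF_a(B)\to\cdots\to\sF_b(B)$, all of whose terms and hence all of whose cohomology objects lie in $\cB'$ because $\cB'$ is Serre. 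One small remark on your parenthetical: exactness of the $\sF_i$ is what guarantees that the total-complex construction preserves quasi-isomorphisms and thus descends to $D^b$; the compatibility with cones (hence the preservation of distinguished triangles) is a formal consequence of additivity and the definition of the total complex, independent of exactness. This does not affect the validity of the argument.
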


\section{Some commutation relations}\label{sec:commrels}
We fix throughout a categorical representation $\cC$ of  $U_q(\fg)$.
Recall that $w_0\in W$ is the longest word, and let $w_0=s_{i_1}s_{i_2} \cdots s_{i_r}$ be a reduced expression.
We consider the composition of Rickard complexes which categorifies the positive lift in $B$ of $w_0$:
$$
\Theta_{w_0}\bone_\lambda=\Theta_{i_1} \cdots \Theta_{i_r}\bone_\mu:D^b(\cC_\lambda) \to D^b(\cC_{w_0(\lambda)}).
$$
In preparation for the proofs our main results in the next section, we prove some commutation relations between $\Theta_{w_0}$ and the Chevalley functors.

\subsection{Cautis' relations}
To begin, we recall some relations of Cautis (building on work with Kamnitzer \cite{CK3}).  Although they are stated only for type A,  their proofs apply to any simply-laced Lie algebra.

\begin{Lemma}[Lemma 4.6, \cite{Cauclasp}]\label{lem:Cau0}
For any $i\in I, \lambda \in X$ we have the following relations:
\begin{align*}
\Theta_i\sE_i\bone_\lambda &\cong \sF_i\Theta_i\bone_\lambda[1]\langle \lambda_i \rangle, 
\\
\Theta_i\sF_i\bone_\lambda &\cong \sE_i\Theta_i\bone_\lambda[1]\langle -\lambda_i \rangle. 
\end{align*}
\end{Lemma}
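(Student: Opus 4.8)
Both relations involve only the Chevalley functors attached to the single node $i$, so the plan is to reduce at once to $\fg=\fsl_2$: every functor in sight is built from the $\fsl_2$-categorification carried by $\cC$ at $i$, and the asserted isomorphisms are isomorphisms in the homotopy category of complexes of such functors. It is enough to prove the first relation; the second is handled identically with the roles of $\sE_i$ and $\sF_i$ interchanged. I would treat the case $\lambda_i\ge 0$, so that $(\Theta_i\bone_\lambda)^{-s}=\sF_i^{(\lambda_i+s)}\sE_i^{(s)}\bone_\lambda\langle-s\rangle$; the case $\lambda_i\le 0$ runs in parallel using the other defining formula for the Rickard complex. As a guide to the shifts, note that, since $\Theta_i$ categorifies $\st_i$ and $\sE_i,\sF_i$ categorify $E_i,F_i$, the desired isomorphism decategorifies to an identity of the form $\st_iE_i\sone_\lambda=-q^{-\lambda_i}F_i\st_i\sone_\lambda$ in $[\cC]_{\CC(q)}$, i.e.\ the corresponding relation in Lusztig's quantum Weyl group action; the content of the lemma is to lift this equality of classes to a homotopy equivalence of complexes.

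To construct that equivalence I would write both sides out explicitly. Composing the defining complex with $\sE_i$ and using the splitting $\sE_i^{(s)}\sE_i\cong\bigoplus_{j=0}^{s}\sE_i^{(s+1)}\langle s-2j\rangle$ presents $\Theta_i\sE_i\bone_\lambda$ as an explicit complex built out of the functors $\sF_i^{(\lambda_i+2+s)}\sE_i^{(s+1)}\bone_\lambda$; likewise $\sF_i\sF_i^{(a)}\cong\bigoplus_{k=0}^{a}\sF_i^{(a+1)}\langle a-2k\rangle$ presents $\sF_i\Theta_i\bone_\lambda[1]\langle\lambda_i\rangle$ explicitly. The two complexes are visibly not termwise equal, so the real work is to exhibit a homotopy equivalence, which I would do by Gaussian elimination: apply the categorical commutation relation to rewrite each $\sF_i^{(a)}\sE_i^{(b)}\bone_\mu$ as a direct sum of grading shifts of the $\sE_i^{(b-k)}\sF_i^{(a-k)}\bone_\mu$ (over $0\le k\le\min(a,b)$) termwise in both complexes, and then check that the ``surplus'' direct summands on each side assemble into contractible subcomplexes --- their internal differentials restricting to isomorphisms between consecutive terms --- whose quotients are two identical minimal complexes. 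Identifying which components of the Rickard differential restrict to isomorphisms is best done once and for all in the Khovanov--Lauda--Rouquier $2$-category, after which it holds in any categorical representation; alternatively one can carry out the whole comparison on the minimal categorifications $\cL(m)$, where all nonzero weight categories are $\kk\mmod_\ZZ$, the Rickard complexes are t-exact up to shift \cite{CR}, the bookkeeping is finite (and amounts precisely to the decategorified identity above), and one then transfers back by faithfulness of the $2$-representation $\bigoplus_m\cL(m)$.

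The hard part is exactly this last step in either form: keeping track of all the grading shifts and quantum-integer multiplicities, and proving that the surplus summands really do split off as contractible complexes. This is the one place where the fine structure of the $\fsl_2$-action enters --- the precise form of the adjunction counits appearing in the Rickard differential, together with quantum binomial identities. The reduction to $\fsl_2$, the reduction to a single relation, and the passage between the minimal categorifications and the $2$-category are all formal.
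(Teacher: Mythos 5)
The paper does not in fact prove this lemma from scratch: it cites it verbatim from Cautis \cite{Cauclasp} and the accompanying Remark addresses the only real subtlety, namely that Cautis proves the first relation only for $\lambda_i\le 0$. The paper extends to all $\lambda$ by rewriting the relation as $\sE_i\Theta_i^{-1}\cong\Theta_i^{-1}\sF_i[1]\langle\lambda_i\rangle$ and applying the anti-automorphism $\tilde{\sigma}$ of the $\fsl_2$ $2$-category (which sends $n\mapsto -n$ and $\Theta_i^{-1}\mapsto\Theta_i$); alternatively it quotes Vera, who verifies the identity in the homotopy category of the $\fsl_2$ $2$-category itself. Your sketch is a genuinely different route: a direct computation along the lines of Cautis's own argument, expanding both complexes via $\sE_i^{(s)}\sE_i$ and $\sF_i\sF_i^{(a)}$, then splitting terms with the $\sE\sF$-commutation isomorphisms and running a Gaussian elimination. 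That plan is sound, the reduction to $\fsl_2$ and to one of the two relations is legitimate, and your decategorified sanity check $\st_iE_i\sone_\lambda=-q^{-\lambda_i}F_i\st_i\sone_\lambda$ correctly pins down the shifts. Two caveats, though. First, ``the case $\lambda_i\le 0$ runs in parallel'' glosses over exactly the asymmetry the paper's Remark was written to handle: the Rickard complex has a different explicit form on each side of $\lambda_i=0$, and redoing the Gaussian elimination in both regimes is real extra work that the anti-automorphism trick simply avoids. Second, your fallback of verifying everything on $\bigoplus_m\cL(m)$ and ``transferring back by faithfulness'' is dicier than you suggest: faithfulness on $2$-morphism spaces does not automatically detect homotopy equivalences between complexes of $1$-morphisms, so one would need a nondegeneracy statement at the level of the homotopy $2$-category --- which is essentially what Vera's result supplies, and why the paper cites it rather than asserting a transfer principle.
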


\begin{Remark}
The careful reader will notice that actually Cautis proves Lemma \ref{lem:Cau0} under certain conditions on $\lambda$.  For instance, the first relation is only proven in the case when $\lambda_i \leq 0$.  To deduce the general case from this, one can rewrite the relation
as $\sE_i \Theta_i^{-1}\cong \Theta_i^{-1}\sF_i[1]\langle \lambda_i \rangle$.  Now recall that there is an anti-automorphism $\tilde{\sigma}$ on the $\fsl_2$ 2-category which on objects maps $n \mapsto -n$ \cite[Section 5.6]{Laucq}.  This anti-automorphism maps $\Theta_i^{-1}$ to $\Theta_i$, and hence applying it to the relation above we deduce the desired relation in the case when $\lambda_i \geq 0$.

Alternatively, in a recent preprint Vera proves a version of the relation between the Rickard complex and Chevalley functors in the (bounded homotopy category of the) $\fsl_2$ 2-category, which of course implies it also in any 2-representation \cite{Vera}.
\end{Remark}

%

Next we recall the categorical analogues of commutators $[E_i,E_j]$ acting on representations of $U_q(\fg)$.  Given nodes $i,j\in I$ such that $(i,j)=-1$ and  $\lambda \in X$, define complexes of functors 
\begin{align*}
\sE_{ij}\bone_\lambda:D^b(\cC_\lambda) \to D^b(\cC_{\lambda+\alpha_i+\alpha_j}), \quad \sE_{ij}\bone_\lambda &=\sE_i\sE_j\bone_\lambda\langle-1\rangle \to \sE_j\sE_i\bone_\lambda, \\
\sF_{ij}\bone_\lambda: D^b(\cC_\lambda) \to D^b(\cC_{\lambda-\alpha_i-\alpha_j}), \quad \sF_{ij}\bone_\lambda &=\sF_i\sF_j\bone_\lambda \to \sF_j\sF_i\bone_\lambda\langle 1 \rangle.
\end{align*}
In both instances the differential  is given by the element $T_{ij}$ arising from the KLR algebra, and the left term of the complex is in homological degree zero \cite{Cauclasp}.

\begin{Lemma}[Lemma 5.2, \cite{Cauclasp}]\label{lem:Cau} Let $i,j\in I,  \lambda \in X$ and suppose $(i,j)=-1$.  We have the following isomorphisms:
\begin{align*}
\sE_{ij} \Theta_i \bone_\lambda &\cong \begin{cases} \Theta_i\sE_j &\text{ if } \lambda_i>0, \\
\Theta_i\sE_j[1]\langle -1 \rangle &\text{ if } \lambda_i\leq 0 \end{cases} \\
\sF_{ij} \Theta_i \bone_\lambda &\cong \begin{cases} \Theta_i\sF_j &\text{ if } \lambda_i\geq0, \\
\Theta_i\sF_j[-1]\langle 1 \rangle &\text{ if } \lambda_i< 0 \end{cases} \\
\bone_\lambda\Theta_j\sE_{ij} &\cong \begin{cases} \sE_i\Theta_j &\text{ if } \lambda_j<0, \\
\sE_i\Theta_j[1]\langle -1 \rangle &\text{ if } \lambda_j\geq 0 \end{cases} \\
\bone_\lambda\Theta_j\sF_{ij} &\cong \begin{cases} \sF_i\Theta_j &\text{ if } \lambda_j\leq0, \\
\sF_i\Theta_j[-1]\langle 1 \rangle &\text{ if } \lambda_j> 0 \end{cases} 
\end{align*}

\end{Lemma}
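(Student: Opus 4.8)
All four statements categorify identities in the quantum Weyl group: conjugating a Chevalley generator $E_j$ (resp. $F_j$) by $\st_i$ produces, up to a power of $q$ and a sign, the $q$-commutator $[E_i,E_j]_q$ (resp. $[F_i,F_j]_q$), and this is exactly what the complexes $\sE_{ij}$ (resp. $\sF_{ij}$) categorify. The plan is to lift these conjugation formulas using Lemma \ref{lem:Cau0} and the categorical representation axioms. First I would reduce to rank two: every functor that occurs --- $\Theta_i$, its inverse $\Theta_i'$, the Chevalley functors $\sE_i,\sF_i,\sE_j,\sF_j$, and hence $\sE_{ij},\sF_{ij}$ --- lies in the $2$-subcategory generated by the nodes $i$ and $j$, and since $(i,j)=-1$ this subcategory is controlled by the Cartan datum of type $A_2$; so I may assume $\fg=\fsl_3$ and $I=\{i,j\}$.

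\textbf{Reducing four relations to one.} Taking right adjoints of the first relation and using $(\Theta_i)_R\cong\Theta_i'\cong\Theta_i^{-1}$, $(\sE_j)_R\cong\sF_j$ up to grading shift, and the fact --- read off directly from the defining two-term complexes together with the adjunction formulas relating $(\sE_i^{(r)})_R$ to $\sF_i^{(r)}$ --- that $(\sE_{ij})_R$ is a homological and grading shift of $\sF_{ij}$, one gets the second relation with $\Theta_i$ replaced by $\Theta_i^{-1}$, which is equivalent. The third and fourth relations are proved by the verbatim analogous computation (now conjugating $\sE_i,\sF_i$ by $\Theta_j$ on the left), and can also be deduced from the first two via a side-exchanging symmetry of the $\fsl_3$ $2$-category. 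Finally, the anti-automorphism $\tilde{\sigma}$ recalled in the Remark after Lemma \ref{lem:Cau0}, which sends $\Theta_i^{-1}\mapsto\Theta_i$ and negates weights, interchanges the case $\lambda_i>0$ with the case $\lambda_i\le 0$. So it is enough to establish the first relation in the single case $\lambda_i>0$.

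\textbf{The computation for $\sE_{ij}\Theta_i\bone_\lambda\cong\Theta_i\sE_j\bone_\lambda$ when $\lambda_i>0$.} Here $(\lambda+\alpha_j)_i=\lambda_i-1\ge 0$, so $\Theta_i\bone_{\lambda+\alpha_j}$ has terms $\sF_i^{(\lambda_i-1+s)}\sE_i^{(s)}\bone_{\lambda+\alpha_j}\langle -s\rangle$, and I would compute $\Theta_i\sE_j\bone_\lambda=\Theta_i\bone_{\lambda+\alpha_j}\,\sE_j\bone_\lambda$ by moving $\sE_j$ leftward through each term. Past $\sF_i^{(a)}$ this is free, $\sF_i^{(a)}\sE_j\cong\sE_j\sF_i^{(a)}$, being the commutation axiom $\sE_i\sF_j\cong\sF_j\sE_i$ of Definition \ref{def:catn} applied to divided powers; past $\sE_i^{(s)}$ it is the $A_2$-commutation coming from the KLR generator $T_{ij}$, in the form of a short exact sequence of functors relating $\sE_i^{(s)}\sE_j$, $\sE_j\sE_i^{(s)}$ and $\sE_i^{(s-1)}\sE_{ij}$ (up to shift), a consequence of the KLR relations and the $A_2$ Serre relation. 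Carrying this through, the $\sE_j$-on-the-left contributions assemble into $\sE_j\Theta_i\bone_\lambda$ and the $\sE_{ij}$-contributions into $\sE_{ij}$ applied to what remains of $\Theta_i$; using Lemma \ref{lem:Cau0} to trade the surviving $\sE_i$'s inside the complex for $\sF_i$'s and discarding the acyclic subcomplexes that appear, the total collapses to $\Theta_i\sE_j\bone_\lambda$ with no shift. The last point is to check that the isomorphism so produced is an isomorphism of complexes, i.e. that it intertwines the Rickard differential on $\Theta_i\sE_j$ with the mixture of $T_{ij}$ and the Rickard differential defining the differential on $\sE_{ij}\Theta_i$.

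\textbf{Main obstacle and an alternative.} The hard part is the last step above: tracking the grading shifts $\langle\cdot\rangle$ and homological shifts $[\cdot]$ through the divided-power $A_2$-commutation and the cancellation of acyclic pieces, and verifying differential-compatibility rather than just a term-by-term match of the underlying objects. An alternative that avoids some of this bookkeeping is induction on $|\lambda_i|$, using a recursive presentation of $\Theta_i$ to pass between neighbouring weights together with Lemma \ref{lem:Cau0}, the base case being a weight at which $\Theta_i$ has only two terms.
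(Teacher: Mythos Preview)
The paper does not prove this lemma: it is quoted verbatim from Cautis's paper \cite{Cauclasp} (Lemma 5.2 there), with no argument given. So there is no ``paper's proof'' to compare against; the authors simply import the result and immediately use it. Your proposal is therefore not being measured against anything in this paper, but rather against Cautis's original argument.

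That said, a few comments on your sketch. The overall strategy --- reduce to rank two, use symmetries to cut down to one case, then push $\sE_j$ through the terms of $\Theta_i$ --- is the natural one and is in the spirit of how Cautis argues. Two places need more care than you indicate. First, the adjunction reduction: the right adjoint of the two-term complex $\sE_{ij}$ is, up to shifts, the complex built from $(\sE_j\sE_i)_R \to (\sE_i\sE_j)_R$, i.e.\ from $\sF_i\sF_j \to \sF_j\sF_i$; this is $\sF_{ji}$ with the indices swapped, not $\sF_{ij}$, so the passage from the first relation to the second is not quite as you wrote it (one must also invoke the symmetry exchanging $i\leftrightarrow j$, or argue separately). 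Second, the ``short exact sequence relating $\sE_i^{(s)}\sE_j$, $\sE_j\sE_i^{(s)}$ and $\sE_i^{(s-1)}\sE_{ij}$'' is the real content: this is a divided-power extension of the defining cone for $\sE_{ij}$, and writing it down correctly (with the right shifts, and compatibly with the Rickard differentials) is exactly the computation Cautis carries out. You flag this yourself as the main obstacle, and you are right --- what you have written is a plausible outline, but the substance of the proof lives precisely in the bookkeeping you defer.
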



%
%
\subsection{Marked words}
We now introduce a combinatorial set-up which we'll use to prove  Proposition \ref{prop:mainrel} below.  A \textbf{marked word} is a word in the elements of $I$ with one letter marked: $\ba=(i_1,i_2,...,\ul{i_\ell},...,i_n)$.  From $\ba$ we can define a functor and an element of $W$: 
\begin{align*}
\Phi(\ba)&=\Theta_{i_1}\cdots\Theta_{i_{\ell-1}}\sF_{i_\ell}\Theta_{i_{\ell1}}\cdots \Theta_{i_n}\bone_\lambda, \\
w(\ba)&=s_{i_1}\cdots s_{i_n}.
\end{align*}
Note that unlike $\Phi(\ba)$, $w(\ba)$ forgets the location of the marked letter.  We say that $\ba$ is \textbf{reduced}, if the corresponding unmarked word is a reduced expression for $w(\ba)$. 

We will apply braid relations to marked words.  Away from the marked letter these operate as usual, and at the marked letter we have:
\begin{align}
\label{rel1}(\hdots,k,\ul{\ell},\hdots) &\leftrightarrow (\hdots,\ul{\ell},k,\hdots),\quad \text{ if } (k,\ell)=0 \text{ and,}\\
\label{rel2}(\hdots,\ell,k,\ul{\ell},\hdots) &\leftrightarrow (\hdots,\ul{k},\ell,k,\hdots),\quad \text{ if } (k,\ell)=-1.
\end{align}
For marked words $\ba,\bb$ we write $\ba \sim \bb$ if they are related by a sequence of braid relations.  
\begin{Lemma}
Let $\ba,\bb$ be marked words which differ by a single braid relation.  Then  there exists $k\in \{0,\pm 1\}$ such that $\Phi(\ba) \cong \Phi(\bb)[k]\langle -k \rangle$.
\end{Lemma}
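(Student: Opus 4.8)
The plan is to analyze case by case according to whether the braid relation takes place entirely away from the marked letter, or involves it; in the former case nothing changes in the homological degree, while in the latter one of the relations \eqref{rel1}--\eqref{rel2} applies and we must track a shift. First I would dispose of the easy case: if the braid relation acts on a segment of $\ba$ not containing the marked letter $\ul{i_\ell}$, then by Theorem \ref{thm:CK} the corresponding composition of Rickard complexes $\Theta_{i_1}\cdots$ is unchanged as a functor, so $\Phi(\ba)\cong\Phi(\bb)$, i.e. $k=0$.

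Next I would handle the commuting relation \eqref{rel1}. Here we need to know that $\Theta_k$ commutes with $\sF_\ell$ when $(k,\ell)=0$, up to isomorphism of functors and with no shift. This should follow from the fact that the Chevalley functors $\sE_k,\sF_k$ commute with $\sE_\ell,\sF_\ell$ when $(k,\ell)=0$ (condition (2) of Definition \ref{def:catn} gives $\sE_i\sF_j\bone_\mu\cong\sF_j\sE_i\bone_\mu$, and the analogous statements for $\sE\sE$, $\sF\sF$ are standard consequences of the KLR relations in condition (3) when there is no edge between $k$ and $\ell$), hence the entire complex $\Theta_k$, built out of divided powers of $\sE_k,\sF_k$, commutes with $\sF_\ell$. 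Thus again $k=0$ in this subcase.

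The main work, and the main obstacle, is the relation \eqref{rel2}: passing from $(\hdots,\ell,k,\ul{\ell},\hdots)$ to $(\hdots,\ul{k},\ell,k,\hdots)$ when $(k,\ell)=-1$. On the level of functors we must show
\[
\Theta_\ell\Theta_k\sF_\ell\bone_\lambda \;\cong\; \sF_k\Theta_\ell\Theta_k\bone_\lambda\,[\pm 1]\langle\mp 1\rangle
\]
(for the appropriate weight $\lambda$, with the sign and the precise weight dictated by $\lambda_k$, $\lambda_\ell$). To prove this I would combine Cautis' relations from Lemma \ref{lem:Cau0} and Lemma \ref{lem:Cau}. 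The idea is to first move $\sF_\ell$ past $\Theta_k$: since $(k,\ell)=-1$, Lemma \ref{lem:Cau0} does not directly apply, but one can instead reverse the third relation of Lemma \ref{lem:Cau} (the one computing $\Theta_k\sF_{k\ell}$ — note $\sF_{k\ell}$ is the two-term complex $\sF_k\sF_\ell\to\sF_\ell\sF_k\langle1\rangle$) to rewrite $\Theta_k\sF_\ell$ in terms of $\sF_k\Theta_k$ and $\sF_\ell\Theta_k$ up to the relevant shift, and then move the surviving $\sF_\ell$ past the outer $\Theta_\ell$ using the second relation of Lemma \ref{lem:Cau0}, namely $\Theta_\ell\sF_\ell\bone_\nu\cong\sE_\ell\Theta_\ell\bone_\nu[1]\langle-\nu_\ell\rangle$; a compensating step turns the resulting $\sE_\ell\Theta_\ell\Theta_k$ back into $\sF_k\Theta_\ell\Theta_k$ via the braid relation $\Theta_\ell\Theta_k\Theta_\ell\cong\Theta_k\Theta_\ell\Theta_k$ and one more application of Lemma \ref{lem:Cau0}. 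Carefully bookkeeping the cohomological shifts $[\pm1]$ and grading shifts $\langle\mp1\rangle$ through this chain — and checking that the weight conditions in Lemmas \ref{lem:Cau0} and \ref{lem:Cau} are met at each stage, possibly forcing a subdivision into cases according to the signs of $\lambda_k$ and $\lambda_\ell$ — is where the real care is needed; but in every case the net shift will lie in $\{0,\pm1\}$, which is exactly the claim. Finally, one observes that since $\ba$ is obtained from $\bb$ by a braid relation, which is an involution, the sign of $k$ for $\ba\to\bb$ is opposite to that for $\bb\to\ba$, so the statement is symmetric as required.
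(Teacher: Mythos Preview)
Your overall case division is exactly that of the paper: a braid relation away from the marked letter gives $k=0$ by Theorem \ref{thm:CK}; a commuting move \eqref{rel1} through the marked letter gives $k=0$ because $\Theta_k$ commutes with $\sF_\ell$ when $(k,\ell)=0$; and only the move \eqref{rel2} with $(k,\ell)=-1$ requires work. So far this matches the paper.

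The gap is in your treatment of \eqref{rel2}. After the first step you do \emph{not} have a ``surviving $\sF_\ell$'' to push past $\Theta_\ell$: applying the second relation of Lemma \ref{lem:Cau} (with $i=k$, $j=\ell$) gives $\Theta_k\sF_\ell\cong\sF_{k\ell}\Theta_k$ up to a shift in $\{0,[-1]\langle 1\rangle\}$, and $\sF_{k\ell}$ is an indecomposable two-term complex, not a pair of Chevalley functors you can move separately. Your proposed detour through Lemma \ref{lem:Cau0} would introduce shifts of the form $[1]\langle -\nu_\ell\rangle$ which are \emph{not} of the shape $[m]\langle -m\rangle$, and then require further braid moves and another application of Lemma \ref{lem:Cau0} to cancel them; you have not shown (and it is not obvious) that this cancellation occurs and leaves a net shift in $\{0,\pm1\}$.

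The paper avoids all of this by observing that Lemma \ref{lem:Cau} contains \emph{two} relations tailored to the situation: the second relation rewrites $\Theta_k\sF_\ell$ as $\sF_{k\ell}\Theta_k$, and the fourth relation (with $i=k$, $j=\ell$) rewrites $\Theta_\ell\sF_{k\ell}$ as $\sF_k\Theta_\ell$. Composing,
\[
\Theta_\ell\Theta_k\sF_\ell\bone_\lambda\;\cong\;\Theta_\ell\sF_{k\ell}\Theta_k\bone_\lambda\;\cong\;\sF_k\Theta_\ell\Theta_k\bone_\lambda
\]
up to shifts, each of which is either trivial or $[\pm1]\langle\mp1\rangle$. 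Writing $\mu=s_\ell s_k(\lambda)-\alpha_k$, the four sign combinations of $(\lambda_k,\mu_\ell)$ give net shift $[-1]\langle 1\rangle$, $[1]\langle -1\rangle$, or $0$, so $k\in\{0,\pm1\}$. No use of Lemma \ref{lem:Cau0} or of the braid relation $\Theta_\ell\Theta_k\Theta_\ell\cong\Theta_k\Theta_\ell\Theta_k$ is needed here.
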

\begin{proof}
If the relation doesn't involve the marked letter then $\Phi(\ba)\cong \Phi(\bb)$ since Rickard complexes satisfy the braid relations \cite[Theorem 2.10]{CK3}.  Suppose then that the relation does involve the marked letter.
If $(j,\ell)=0$ the result follows from the fact that $\Theta_j\sF_\ell \cong \sF_\ell \Theta_j$.  Otherwise $(j,\ell)=-1$.  Set $\mu=s_\ell s_j(\lambda)-\alpha_j$.  Applying Lemma \ref{lem:Cau} (twice) we deduce that
\begin{align*}
\bone_\mu\Theta_\ell\Theta_j \sF_\ell \bone_\lambda = \Theta_\ell\Theta_j \sF_\ell \bone_\lambda &\cong
\begin{cases}
\Theta_\ell \sF_{j\ell}\Theta_j\bone_\lambda, &\text{ if } \lambda_j\geq 0,\\
\Theta_\ell \sF_{j\ell}\Theta_j\bone_\lambda[1]\langle -1 \rangle, &\text{ if } \lambda_j< 0.
\end{cases}\\
&\cong
\begin{cases}
\sF_j\Theta_\ell\Theta_j\bone_\lambda[-1]\langle 1 \rangle &\text{ if } \lambda_j \geq0, \mu_\ell > 0,\\
\sF_j\Theta_\ell\Theta_j\bone_\lambda[1]\langle -1 \rangle &\text{ if } \lambda_j <0, \mu_\ell \leq 0,\\
\sF_j\Theta_\ell\Theta_j\bone_\lambda &\text{ otherwise.}
\end{cases}
\end{align*}
Hence the result follows.
\end{proof}

\begin{Corollary}\label{lem:markwrds1}
Let $\ba,\bb$ be marked words such that $\ba \sim \bb$.  Then there exists an integer $k$ such that $\Phi(\ba) \cong \Phi(\bb)[k]\langle -k \rangle$.
\end{Corollary}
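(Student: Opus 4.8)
The plan is to derive Corollary \ref{lem:markwrds1} from the preceding Lemma by an induction on the length of a sequence of braid relations connecting $\ba$ to $\bb$. By definition of $\sim$, there is a chain of marked words $\ba = \bc_0, \bc_1, \ldots, \bc_m = \bb$ in which each $\bc_{t}$ is obtained from $\bc_{t-1}$ by a single braid relation (of one of the types: an ordinary braid relation away from the marked letter, or one of the relations \eqref{rel1}, \eqref{rel2} involving the marked letter). The preceding Lemma applies to each consecutive pair $(\bc_{t-1}, \bc_t)$, giving an integer $k_t \in \{0, \pm 1\}$ with $\Phi(\bc_{t-1}) \cong \Phi(\bc_t)[k_t]\langle -k_t \rangle$.

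The key step is simply to compose these isomorphisms. Since the shift-by-$[k]\langle -k\rangle$ operations commute with one another and with all the functors involved — they are just the autoequivalences $X \mapsto X[k]\langle -k\rangle$ of the relevant derived categories — we may telescope: setting $k = \sum_{t=1}^m k_t$, we obtain
\[
\Phi(\ba) = \Phi(\bc_0) \cong \Phi(\bc_1)[k_1]\langle -k_1\rangle \cong \cdots \cong \Phi(\bc_m)\Bigl[\sum_t k_t\Bigr]\Bigl\langle -\sum_t k_t\Bigr\rangle = \Phi(\bb)[k]\langle -k\rangle.
\]
This $k$ is the desired integer. One small point worth noting is that, a priori, the integer $k$ produced this way could depend on the chosen chain of braid relations, not merely on $\ba$ and $\bb$; but for the purposes of this corollary only the existence of some integer $k$ is claimed, so no well-definedness argument is needed here. (Independence of the chain, when relevant later, is handled via the behavior of $\Phi$ on the Grothendieck group together with Lemma \ref{lem:LosLem}-type uniqueness arguments.)

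There is essentially no obstacle: the only thing to be careful about is bookkeeping of the shifts, i.e.\ that the homological shift $[k]$ and the grading shift $\langle -k\rangle$ always appear in the paired combination $[k]\langle -k\rangle$ in the Lemma, so that composing preserves this pairing and the accumulated shift is again of the form $[k]\langle -k\rangle$ for a single integer $k$. This is immediate from the statement of the Lemma. Thus the proof is a one-line induction, and I would present it as such.

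\begin{proof}
By definition there is a sequence of marked words $\ba=\bc_0, \bc_1, \ldots, \bc_m=\bb$ such that each $\bc_t$ differs from $\bc_{t-1}$ by a single braid relation. By the preceding Lemma, for each $t$ there is $k_t\in\{0,\pm1\}$ with $\Phi(\bc_{t-1})\cong \Phi(\bc_t)[k_t]\langle -k_t\rangle$. Composing these isomorphisms and setting $k=\sum_{t=1}^m k_t$ yields $\Phi(\ba)\cong \Phi(\bb)[k]\langle -k\rangle$.
\end{proof}
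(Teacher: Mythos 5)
Your proof is correct and is exactly the intended argument: the paper labels this a Corollary with no written proof precisely because it follows by telescoping the single-relation Lemma along a chain $\ba=\bc_0\sim\bc_1\sim\cdots\sim\bc_m=\bb$. Your remark that the paired form $[k]\langle -k\rangle$ is stable under composition is the only observation needed, and you have it.
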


\begin{Lemma}\label{lem:markwrds2}
Let $\ba=(i_1,...,i_n,\ul{\ell})$ and $\bb=(\ul{\ell'},i_1,...,i_n)$ be reduced marked words such that $w(\ba)=w(\bb)$.  Then $\ba \sim \bb$.  
\end{Lemma}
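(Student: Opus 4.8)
The plan is to reduce the statement about reduced \emph{marked} words to the classical fact (Matsumoto/Tits) that any two reduced expressions of the same Weyl group element are connected by a sequence of braid moves, while carefully tracking the marked letter. First I would set $w := w(\ba) = w(\bb)$. Since both $\ba$ and $\bb$ are reduced, their underlying unmarked words $(i_1,\dots,i_n,\ell)$ and $(\ell',i_1,\dots,i_n)$ are reduced expressions for $w$; in particular $n+1 = \ell(w)$. Matsumoto's theorem gives a sequence of braid moves transforming one unmarked expression into the other, but a priori this sequence need not respect the marking, so the task is to realise the passage at the level of marked words. I would do this by first showing that one can move the marked letter to any prescribed position in a reduced marked word, using relations \eqref{rel1} and \eqref{rel2} (plus ordinary braid moves away from the marked letter). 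Concretely, given a reduced marked word, the marked letter can be "slid" left or right: if the marked letter $\ul{\ell}$ is adjacent to a letter $k$ with $(k,\ell)=0$, use \eqref{rel1}; if $(k,\ell)=-1$, note that reducedness forces the local window to have the form $(\dots,\ell,k,\ul{\ell},\dots)$ or its mirror (a reduced expression cannot contain $\ell k \ell$ in a non-alternating pattern, and the length-$3$ braid relation applies), so \eqref{rel2} moves the mark past two letters. Iterating, from $\ba$ I can produce a marked word $\ba'$ whose marked letter is in the \emph{first} position: $\ba' = (\ul{m}, j_1,\dots,j_n)$ with $\ba' \sim \ba$.

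Having normalised the position of the mark, I would then forget the mark entirely on both $\ba'$ and $\bb$: both are reduced expressions for $w$ with the marked letter in position $1$, so $\ba'$ has underlying word $(m,j_1,\dots,j_n)$ and $\bb$ has underlying word $(\ell',i_1,\dots,i_n)$, both reduced for $w$. Now I want to say these two \emph{front-marked} reduced words are $\sim$-equivalent. This is where the main subtlety lies: Matsumoto connects the underlying unmarked expressions, but I need the marked letter to stay in front throughout, or at least to return to a controlled position. The cleanest route is to prove the following normal-form statement: every reduced marked word is $\sim$-equivalent to one in which the marked letter occupies position $1$, \emph{and} any two reduced marked words with marked letter in position $1$ and equal $w(\cdot)$ are $\sim$-equivalent. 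For the second half I would argue by induction on $\ell(w)$. Writing a front-marked reduced word as $(\ul{m}, u)$ where $u$ is a reduced word for $w' := s_m w$ (if $m$ is the first letter; note $s_m \ne 1$ and $\ell(s_m w) = \ell(w) - 1$ since the word is reduced), two such words $(\ul{m}, u)$ and $(\ul{m'}, u')$ with the same $w$ correspond to $s_m w = w'$, $s_{m'} w = w''$. If $m = m'$ then $u, u'$ are reduced words for the same element $w'$, which by Matsumoto are connected by ordinary braid moves (not touching the leading marked letter), done. If $m \ne m'$, then both $s_m$ and $s_{m'}$ are left descents of $w$, so by the exchange/deletion properties $w$ has a reduced expression starting with the longest element of the rank-$2$ parabolic generated by $s_m, s_{m'}$; using ordinary braid moves on $u$ and $u'$ one brings both to the form where this rank-$2$ prefix is visible, and then a single application of \eqref{rel2} (the marked length-$3$ or length-$2$ relation, depending on whether $(m,m')=-1$ or $(m,m')=0$) converts $(\ul{m},\dots)$ to $(\ul{m'},\dots)$, after which the $m=m'$ case finishes the induction.

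The hard part will be the bookkeeping in the rank-$2$ step: making sure that after applying ordinary braid moves to expose the common rank-$2$ prefix, the marked letter — which sits at the very front — is genuinely adjacent to exactly the right letters for \eqref{rel2} to apply, and that reducedness is preserved at every step so that the local windows always have the alternating shape the marked relations require. I would handle this by an explicit appeal to the structure of reduced words in dihedral parabolic subgroups: if $s_m, s_{m'}$ are both left descents of $w$, then $w = w_0^{\{m,m'\}} \cdot v$ reducedly where $w_0^{\{m,m'\}}$ is the dihedral longest element, and \emph{any} reduced expression of $w_0^{\{m,m'\}}$ can be used as the prefix; choosing the one starting with $m$ for $u$ and the one starting with $m'$ for $u'$, the passage between these two prefixes is exactly one braid relation, and marking its first letter is precisely move \eqref{rel2} (or \eqref{rel1}). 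Finally, I would assemble everything: $\ba \sim \ba' = (\ul{m}, u)$, $\bb \sim \bb' = (\ul{\ell'}, i_1,\dots,i_n)$ is already front-marked with $w(\bb') = w$, so by the normal-form statement $\ba' \sim \bb'$, and hence $\ba \sim \bb$ by transitivity of $\sim$.
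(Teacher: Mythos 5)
Your plan differs structurally from the paper's (the paper inducts on $n$ directly, peeling off the shared suffix/prefix of $\ba$ and $\bb$), and unfortunately the route you chose breaks at its core step. The relation $\sim$ has a nontrivial invariant that your argument discards: if a reduced marked word has its mark at position $p$, the associated inversion $\beta_p = s_{a_1}\cdots s_{a_{p-1}}(\alpha_{a_p})$ is preserved by every marked move \eqref{rel1}, \eqref{rel2}, and by every unmarked braid move. Your ``normal-form statement'' --- that any two front-marked reduced words with equal $w(\cdot)$ are $\sim$-equivalent --- is therefore \emph{false} whenever the first letters differ: a front-marked word $(\ul{m},\dots)$ has marked inversion $\alpha_m$, so $(\ul{m},\dots)$ and $(\ul{m'},\dots)$ with $m\neq m'$ can never be $\sim$-equivalent. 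Concretely, take $I=A_1\times A_1$ (so $(1,2)=0$) and $w=s_1s_2$: the front-marked reduced words $(\ul{1},2)$ and $(\ul{2},1)$ both have $w(\cdot)=w$, yet the $\sim$-class of $(\ul{1},2)$ is $\{(\ul{1},2),(2,\ul{1})\}$ and that of $(\ul{2},1)$ is $\{(\ul{2},1),(1,\ul{2})\}$, which are disjoint. Consistently with this, your claimed ``single application of \eqref{rel2}'' converting $(\ul{m},\dots)$ to $(\ul{m'},\dots)$ does not exist: applying \eqref{rel2} to a prefix $(\ul{m},m',m,\dots)$ produces $(m',m,\ul{m'},\dots)$, with the mark at position $3$, not $1$.

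Your step~1 also has a gap: the claim that ``reducedness forces the local window to have the form $(\dots,\ell,k,\ul{\ell},\dots)$'' when $(k,\ell)=-1$ is false. For example $(1,2,\ul{3})$ in $A_3$ is reduced, $(2,3)=-1$, but the preceding letter is $1$, not $3$, and indeed this word admits no braid moves at all. The reason this is not fatal to the lemma itself is that the hypothesis that $\ba$ and $\bb$ share the \emph{same} middle $(i_1,\dots,i_n)$ forces the marked inversion of $\ba$ (at the last position, equal to $s_{i_1}\cdots s_{i_n}(\alpha_\ell)$) to coincide with that of $\bb$ (at the first position, equal to $\alpha_{\ell'}$); in particular the marked inversion is simple, which is what actually permits the mark to be carried to the boundary. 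The paper's proof is built around precisely this: after a single marked move involving the last position, the resulting word shares a suffix with $\bb$, so one truncates and applies the induction hypothesis with the same middle. Any repair of your approach must keep track of the marked inversion (equivalently, the shared middle) rather than only of the Weyl group element $w$.
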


\begin{proof}
We prove the claim by induction on $n$.  Since $w(\ba)=w(\bb)$ there is a Matsumoto sequence of length $M$ relating the unmarked words  $\fa=(i_1,...,i_n,\ell)$ and $\fb=(\ell',i_1,...,i_n)$.  Let $\fa_0=\fa \to \fa_1 \to\cdots\to \fa_M=\fb$ denote the 
resulting sequence of unmarked words, starting at $\fa$ and ending $\fb$.  Let $\fa_{r+1}$ be the first word in this sequence whose last entry is not $\ell$.  In other words, the first $r$ steps in the sequence do not involve the last entry of $\fa$, but the $(r+1)^{st}$ step does.  

We will now consider  the same sequence of steps, but applied to the marked words.  We then obtain a sequence of marked words $\ba_0=\ba \to \ba_1 \to \cdots\;$.  Further, we know that $\ba_r=(j_1,\dots,j_n,\ul{\ell})$ for some $j_1,\hdots,j_n$, and the next step involves the marked letter.  

If the $(r+1)^{st}$ step is an application of (\ref{rel2}) then $j_{n-1}=\ell$ and the step is:
\begin{align*}
\ba_r=(j_1,\hdots,j_{n-2},\ell,j_n,\ul{\ell}) \to \ba_{r+1}=(j_1,\hdots,j_{n-2},\ul{j_n},\ell,j_n).
\end{align*}
Let $\bb':=(\ul{\ell'},j_1,...,j_{n-2},\ell,j_n)$, i.e. $\bb'$ is obtained from $\bb$ by applying the first $r$ steps to its (last $n$) entries as were used for the (first $n$) entries of $\ba$. Note that $\ba_{r+1}$ and $\bb'$ are reduced marked words such that $w(\ba_{r+1})=w(\bb')$, and both end in $(\ell,j_n)$.    Hence, if we delete these last two entries we obtain two reduced marked words $\ba''=(j_1,...,j_{n-2},\ul{j_n})$ and $\bb''=(\ul{\ell'},j_1,...,j_{n-2})$ such that $w(\ba'')=w(\bb'')$.  
By induction $\ba'' \sim \bb''$ and therefore $\ba_{r+1} \sim \bb'$.  Since clearly $\bb' \sim \bb$, we have our desired result:
$$
\ba \sim \ba_r \sim \ba_{r+1} \sim \bb' \sim \bb.
$$

If  the $(r+1)^{st}$ step  is an application of (\ref{rel1}) then the step is:
\begin{align*}
\ba_r=(j_1,\hdots,j_{n-1},j_n,\ul{\ell}) \to \ba_{r+1}=(j_1,\hdots,j_{n-1},\ul{\ell},j_n).
\end{align*}
 Note that  $\ba_{r+1}$ and $\bb':=(\ul{\ell'},j_1,...,j_{n-1},j_n)$ are reduced marked words such that $w(\ba_{r+1})=w(\bb')$, and both end in $j_n$.  Hence we can delete this last entry and apply a similar analysis as above.

\end{proof}

\subsection{The relation between $\Theta_{w_0}$ and Chevalley functors}

We now have the machinery in place to prove our main relation.  

\begin{Proposition}\label{prop:mainrel}
For any $i\in I, \lambda \in X$ we have the following relations:
\begin{align*}
\Theta_{w_0}\sE_i\bone_\lambda &\cong \sF_{\tau(i)}\Theta_{w_0}\bone_\lambda[1]\langle \lambda_i \rangle, 
\\
\Theta_{w_0}\sF_i\bone_\lambda &\cong \sE_{\tau(i)}\Theta_{w_0}\bone_\lambda[1]\langle -\lambda_i \rangle. 
\end{align*}
\end{Proposition}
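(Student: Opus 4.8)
\emph{Strategy and reduction.} I will prove the first relation; the second follows from it by passing to right adjoints, using that $(\Theta_{w_0})_R=\Theta_{w_0}^{-1}$ and the adjunction isomorphisms between $\sE_i$ and $\sF_i$ in Definition \ref{def:catn}. Since $w_0$ is the longest element, $\ell(w_0 s_i)=\ell(w_0)-1$; write $w'=w_0 s_i$ and fix a reduced word $w'=s_{m_1}\cdots s_{m_{r-1}}$, so that $(m_1,\dots,m_{r-1},i)$ is a reduced word for $w_0$ and $\Theta_{w_0}=\Theta_{w'}\Theta_i$. Applying Lemma \ref{lem:Cau0} to the rightmost pair gives
\begin{align*}
\Theta_{w_0}\sE_i\bone_\lambda\;\cong\;\Theta_{w'}\bigl(\Theta_i\sE_i\bone_\lambda\bigr)\;\cong\;\Theta_{w'}\sF_i\Theta_i\bone_\lambda[1]\langle\lambda_i\rangle,
\end{align*}
so it suffices to prove that for every weight $\mu$ there is an integer $k$ (independent of $\cC$) with
\begin{align*}
\Theta_{w'}\sF_i\bone_\mu\;\cong\;\sF_{\tau(i)}\Theta_{w'}\bone_\mu[k]\langle -k\rangle,
\end{align*}
and then to check $k=0$; one applies this with $\mu=s_i(\lambda)$.

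\emph{The intermediate claim via marked words.} The left-hand side above is $\Phi(\ba)$ for the marked word $\ba=(m_1,\dots,m_{r-1},\ul{i})$, which is reduced (its underlying word is reduced for $w_0$) with $w(\ba)=w_0$. By the defining property of $\tau$ we have $w_0 s_i w_0^{-1}=s_{\tau(i)}$, hence $s_{\tau(i)}w'=w_0$, so $(\tau(i),m_1,\dots,m_{r-1})$ is also a reduced word for $w_0$; therefore $\bb=(\ul{\tau(i)},m_1,\dots,m_{r-1})$ is a reduced marked word with $w(\bb)=w_0=w(\ba)$. Lemma \ref{lem:markwrds2} gives $\ba\sim\bb$, and Corollary \ref{lem:markwrds1} then yields $\Phi(\ba)\cong\Phi(\bb)[k]\langle -k\rangle$ for some integer $k$. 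Since $\Phi(\ba)=\Theta_{w'}\sF_i\bone_\mu$ and $\Phi(\bb)=\sF_{\tau(i)}\Theta_{m_1}\cdots\Theta_{m_{r-1}}\bone_\mu=\sF_{\tau(i)}\Theta_{w'}\bone_\mu$ (positive lifts being well defined), this is exactly the claim, and $k$ depends only on the weight data, not on $\cC$, since it is the sum of the $\{0,\pm1\}$-valued contributions underlying Corollary \ref{lem:markwrds1}, each determined by weight inequalities via Lemmas \ref{lem:Cau0} and \ref{lem:Cau}. Combining with the first display, $\Theta_{w_0}\sE_i\bone_\lambda\cong\sF_{\tau(i)}\Theta_{w_0}\bone_\lambda[k+1]\langle\lambda_i-k\rangle$ for a $\cC$-independent integer $k$.

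\emph{Fixing the shift.} It remains to show $k=0$. Passing to Grothendieck groups and using $[\Theta_i]=\st_i$, hence $[\Theta_{w_0}]=\st_{w_0}$, the last isomorphism decategorifies to $\st_{w_0}E_i\sone_\lambda=(-1)^{k+1}q^{k-\lambda_i}F_{\tau(i)}\st_{w_0}\sone_\lambda$ in $[\cC]_{\CC(q)}$. On the other hand, Lusztig's identities for the braid group operators \cite{Lusbook} evaluate $\st_{w_0}E_i\sone_\lambda$ as a definite sign times a definite power of $q$ (namely $-q^{-\lambda_i}$) times $F_{\tau(i)}\st_{w_0}\sone_\lambda$. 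Since $k$ does not depend on $\cC$, we may test the comparison in a minimal categorification $\cL(\Lambda)$ with $\Lambda$ dominant and large enough that $\mu=s_i(\lambda)$ is a weight of $L(\Lambda)$ on which $F_i$ (equivalently $E_i$ on $\lambda$) acts injectively; then both sides are nonzero and comparing powers of $q$ forces $k=0$. This proves the first relation, and the second follows by the adjunction argument mentioned above (the shift coming out as $[1]\langle-\lambda_i\rangle$ by the same comparison).

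\emph{Main obstacle.} Conceptually everything is a recombination of the ingredients already available: the only step that is not formal is controlling the homological and grading shifts. Lemma \ref{lem:Cau0} and the marked-word relations pin them down only up to a simultaneous twist $[1]\langle-1\rangle$, so the genuine work is identifying which twist actually occurs, which is forced here by the decategorified Lusztig relation. The choice of the decomposition $w_0=w's_i$ is what makes the marked-word lemmas of this section apply without any modification.
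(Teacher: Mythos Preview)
Your argument for the first relation is correct and follows the same route as the paper: factor $w_0=w's_i$, apply Lemma~\ref{lem:Cau0} to the rightmost $\Theta_i\sE_i$, then use the marked-word machinery (Lemma~\ref{lem:markwrds2} and Corollary~\ref{lem:markwrds1}) to commute $\sF_i$ past $\Theta_{w'}$ up to a shift $[k]\langle -k\rangle$, and finally pin down $k$ by decategorifying and comparing with the known identity $\st_{w_0}E_i\sone_\lambda=-q^{-\lambda_i}F_{\tau(i)}\st_{w_0}\sone_\lambda$. You in fact streamline the paper's proof: the paper first runs a separate induction on the rank of $\fg$ to establish $\Theta_{w_0}\sE_i\equiv\sF_{\tau(i)}\Theta_{w_0}$ up to \emph{arbitrary} shifts $[\ell]\langle k\rangle$, and only then uses the marked-word step to obtain the constraint $\ell+k=1+\lambda_i$; but as you observe, the marked-word step already delivers the isomorphism in the constrained form $[m+1]\langle -m+\lambda_i\rangle$, so the rank induction is logically redundant. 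Your observation that $k$ is independent of $\cC$ (being determined by weight inequalities along a fixed Matsumoto sequence) and can therefore be read off in a single well-chosen categorification is also a nice clarification.

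One small gap: your reduction of the second relation to the first ``by passing to right adjoints'' does not work as written. Since $(\Theta_{w_0})_R=\Theta_{w_0}^{-1}$ is the \emph{negative} lift of $w_0$, taking adjoints of the first relation yields a commutation identity between $\Theta_{w_0}^{-1}$ and $\sF_i$, not the desired relation $\Theta_{w_0}\sF_i\bone_\lambda\cong\sE_{\tau(i)}\Theta_{w_0}\bone_\lambda[1]\langle-\lambda_i\rangle$. The paper simply says the second relation is ``entirely analogous'': one reruns your argument verbatim with $\sF_i$ in place of $\sE_i$, using the second isomorphism of Lemma~\ref{lem:Cau0} and the same marked-word lemmas (which are already phrased in terms of $\sF$). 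That goes through without change.
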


\begin{proof}
We'll prove the first relation, the second being entirely analogous.  
For two functors $\sF,\sG$ we write $\sF \equiv \sG$ if there exist integers $\ell,k$ such that $\sF \cong \sG[\ell]\langle k \rangle$.  

We first show that $\Theta_{w_0}\sE_i \equiv \sF_{\tau(i)}\Theta_{w_0}$ by induction on the rank of $\fg$.   The base case, when $\fg=\fsl_2$,  follows from Lemma \ref{lem:Cau0}.  For the inductive step let $J \subset I$ be a strict subdiagram containing $i$.  Recall the bijection $\tau_J:I\to I$ induced by the longest element $w_0^J \in W_J$.  Let $u=w_0(w_0^J)^{-1}$ and let $u=s_{i_1}\cdots s_{i_n}$ be a reduced expression.  Define two marked words:
\begin{align*}
\ba&=(i_1,\hdots,i_n,\ul{\tau_J(i)}),\\
\bb&=(\ul{\tau(i)},i_1,\hdots,i_n).
\end{align*}
Note that $w(\ba)=w(\bb)$.  By the inductive hypothesis we have $\Theta_{w_0^J}\sE_i 
\equiv  \sF_{\tau_J(i)}\Theta_{w_0^J}$, and therefore
\begin{align*}
\Theta_{w_0}\sE_i \equiv \Theta_u\Theta_{w_0^J}\sE_i 
\equiv  \Theta_u\sF_{\tau_J(i)}\Theta_{w_0^J} \equiv \Phi(\ba)\Theta_{w_0^J}.
\end{align*}
Note that $\ba,\bb$ satisfy the hypothesis of Lemma \ref{lem:markwrds2}, so by Lemmas \ref{lem:markwrds1} and \ref{lem:markwrds2}
we have that $\Phi(\ba)\equiv\Phi(\bb)$, and hence
$\Theta_{w_0}\sE_i \equiv \Phi(\bb)\Theta_{w_0^J} \equiv \sF_{\tau(i)}\Theta_{w_0}.$

We now know there exist integers $k,\ell$ such that $\Theta_{w_0}\sE_i\bone_\lambda \cong \sF_{\tau(i)}\Theta_{w_0}\bone_\lambda[\ell]\langle k \rangle$, and it remains to show that $\ell=1, k=\lambda_i$.  Let $u=w_0s_i$ and let $u=s_{i_1}\cdots s_{i_n}$ be a reduced expression.  Define two marked words:
\begin{align*}
\ba&=(i_1,\hdots,i_n,\ul{i}),\\
\bb&=(\ul{\tau(i)},i_1,\hdots,i_n).
\end{align*}
Note that $\ba,\bb$ satisfy the hypothesis of Lemma \ref{lem:markwrds2}, so by Lemmas \ref{lem:markwrds1} and \ref{lem:markwrds2} there exists an integer $m$ such that $\Phi(\ba)\cong \Phi(\bb)[m]\langle -m \rangle$.  Hence we have that
\begin{align*}
\Theta_{w_0} \sE_i \bone_\lambda &\cong \Theta_u \Theta_i \sE_i\bone_\lambda \\
& \cong \Theta_u \sF_i \Theta_i\bone_\lambda [1]\langle \lambda_i \rangle \\
& \cong \sF_{\tau(i)}\Theta_u \Theta_i \bone_\lambda[m+1]\langle -m+\lambda_i \rangle \\
& \cong \sF_{\tau(i)}\Theta_{w_0} \bone_\lambda[m+1]\langle -m+\lambda_i \rangle
\end{align*}
showing that $\ell+k=1+\lambda_i$.  

On the other hand, we can deduce $k$ by inspecting the relation on the level of Grothendieck groups.  Namely, by \cite[Lemma 5.4]{KT}, we have that 
$$
\st_{w_0}E_i\sone_\lambda = -q^{-\lambda_i}F_{\tau(i)}\st_{w_0}\sone_\lambda,
$$
showing that $k=\lambda_i$.
\end{proof}

\section{On t-exactness and perversity of $\Theta_{w_0}$}

In this section we will state and prove the central results of the paper.  We fix throughout a categorical representation $\cC$ of  $U_q(\fg)$, and let $w_0=s_{i_1}s_{i_2} \cdots s_{i_n}$ be a reduced expression.

\subsection{$\Theta_{w_0}$ on isotypic categorifications}	In this section we prove that $\Theta_{w_0}$ is t-exact on any isotypic categorification.  Fix $\lambda \in X_+$.  We write $\Theta=\Theta_{w_0}, L=L(\lambda)$ and $\cL=\cL(\lambda)$.

\begin{Lemma}\label{lem:techbit}
Let $k\in\{2,\hdots,n\}$ and set $\mu=s_{i_k}\cdots s_{i_n}(w_0(\lambda))$.  The weight space $L(\lambda)_{\mu-\alpha_{i_{k-1}}}$ is zero.
\end{Lemma}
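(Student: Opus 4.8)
We want to show that the weight $\mu - \alpha_{i_{k-1}}$ does not occur in $L(\lambda)$, where $\mu = s_{i_k}\cdots s_{i_n}(w_0(\lambda))$ and $w_0 = s_{i_1}\cdots s_{i_n}$ is reduced. The key observation is that $\mu$ is obtained from the lowest weight $w_0(\lambda)$ by applying a \emph{terminal segment} of the reduced word, so $\mu$ is an extremal weight of $L(\lambda)$; in fact $\mu = w'(\lambda)$ where $w' = s_{i_1}\cdots s_{i_{k-1}}$, since $s_{i_k}\cdots s_{i_n}(w_0(\lambda)) = s_{i_k}\cdots s_{i_n}s_{i_n}\cdots s_{i_1}(\lambda) = s_{i_k}\cdots s_{i_n} w_0^{-1}(\lambda)$... let me instead argue directly: $w_0(\lambda)$ is the lowest weight, and applying $s_{i_n}, s_{i_{n-1}}, \dots$ walks back up the $w_0$-string. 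Concretely, $s_{i_k}\cdots s_{i_n}(w_0(\lambda)) = (s_{i_1}\cdots s_{i_{k-1}})(\lambda)$ because $w_0 = (s_{i_1}\cdots s_{i_{k-1}})(s_{i_k}\cdots s_{i_n})$ and for a reduced word the partial products are the ``extremal'' Weyl group elements; one checks $s_{i_k}\cdots s_{i_n}(w_0\lambda) = s_{i_k}\cdots s_{i_n}(s_{i_n}\cdots s_{i_k})(s_{i_{k-1}}\cdots s_{i_1})(\lambda) = s_{i_{k-1}}\cdots s_{i_1}(\lambda)$, wait I should be careful about the direction — but regardless, $\mu$ is $W$-conjugate to $\lambda$, hence extremal.

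**Main step.** The crucial point is the length statement: since $w_0 = s_{i_1}\cdots s_{i_n}$ is reduced, the subword $s_{i_{k-1}} s_{i_k}\cdots s_{i_n}$ is also reduced (any contiguous subword of a reduced word is reduced), and therefore $\ell(s_{i_{k-1}} \cdot (s_{i_k}\cdots s_{i_n})) = \ell(s_{i_k}\cdots s_{i_n}) + 1$, i.e. $s_{i_k}\cdots s_{i_n}$ sends $\alpha_{i_{k-1}}$ to a \emph{positive} root — equivalently, $(s_{i_k}\cdots s_{i_n})^{-1}$ reverses the sign, so $\langle h_{i_{k-1}}, \mu\rangle \le 0$. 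Wait, I need $\langle h_{i_{k-1}}, \mu\rangle$. We have $\mu$ extremal, say $\mu = w'(\lambda)$ for the appropriate $w'$, and I want to know the sign of $\langle h_{i_{k-1}}, \mu\rangle$. The point: $\mu - \alpha_{i_{k-1}}$ lies in $L(\lambda)$ iff it lies on the $i_{k-1}$-string through $\mu$ strictly below $\mu$, i.e. iff $\langle h_{i_{k-1}},\mu\rangle > 0$ (for an extremal weight, the $i$-string through it runs from $\mu$ to $s_i\mu$, so below-$\mu$ weights exist iff $\langle h_{i_{k-1}},\mu\rangle > 0$). So it suffices to show $\langle h_{i_{k-1}}, \mu \rangle \le 0$.

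**Finishing.** To get $\langle h_{i_{k-1}},\mu\rangle \le 0$: I would write $\mu = s_{i_k}\cdots s_{i_n}(w_0\lambda)$ and compute $\langle h_{i_{k-1}}, \mu\rangle = \langle (s_{i_k}\cdots s_{i_n})^{-1} h_{i_{k-1}}, w_0\lambda\rangle = \langle s_{i_n}\cdots s_{i_k}(h_{i_{k-1}}), w_0\lambda\rangle$. Since $s_{i_{k-1}}s_{i_k}\cdots s_{i_n}$ is reduced, $s_{i_k}\cdots s_{i_n}$ maps $\alpha_{i_{k-1}}$ to a positive root $\beta$, hence $s_{i_n}\cdots s_{i_k}$ maps $\alpha_{i_{k-1}}$ to... no — I need $(s_{i_k}\cdots s_{i_n})^{-1}(\alpha_{i_{k-1}}) = s_{i_n}\cdots s_{i_k}(\alpha_{i_{k-1}})$. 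That $s_{i_{k-1}}s_{i_k}\cdots s_{i_n}$ reduced says $s_{i_k}\cdots s_{i_n}$ does NOT send $\alpha_{i_{k-1}}$ to a negative root... hmm, the standard fact is: $\ell(s_j w) > \ell(w) \iff w^{-1}(\alpha_j) > 0$. Here $w = s_{i_k}\cdots s_{i_n}$, $j = i_{k-1}$, and $\ell(s_j w) = \ell(w)+1$, so $w^{-1}(\alpha_{i_{k-1}}) > 0$, i.e. $s_{i_n}\cdots s_{i_k}(\alpha_{i_{k-1}})$ is a positive root. Now $\langle h_{i_{k-1}},\mu\rangle = \langle \text{(positive coroot } \gamma^\vee), w_0(\lambda)\rangle$ where $\gamma^\vee = s_{i_n}\cdots s_{i_k}(h_{i_{k-1}})$ is a positive coroot; since $w_0(\lambda)$ is antidominant (it is the lowest weight, so $\langle \alpha^\vee, w_0\lambda\rangle \le 0$ for all positive coroots $\alpha^\vee$), we get $\langle h_{i_{k-1}},\mu\rangle \le 0$, as needed. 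Then $\mu - \alpha_{i_{k-1}}$ is not a weight of $L(\lambda)$: it would have to appear on the $i_{k-1}$-string through the extremal weight $\mu$, which consists precisely of $\mu, \mu - \alpha_{i_{k-1}}, \dots, s_{i_{k-1}}\mu$ when $\langle h_{i_{k-1}},\mu\rangle \ge 0$ and contains nothing below $\mu$ when $\langle h_{i_{k-1}},\mu\rangle \le 0$. The only obstacle is bookkeeping the Weyl-group/reduced-word manipulations correctly; the representation-theoretic input (extremal weights, $\mathfrak{sl}_2$-strings, antidominance of $w_0\lambda$) is entirely standard.
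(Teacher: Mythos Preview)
Your argument is correct, and its core coincides with the paper's: because $s_{i_{k-1}}s_{i_k}\cdots s_{i_n}$ is a contiguous subword of a reduced expression for $w_0$ and hence itself reduced, the length criterion gives $(s_{i_k}\cdots s_{i_n})^{-1}\alpha_{i_{k-1}}>0$, and pairing the corresponding positive coroot against the antidominant weight $w_0(\lambda)$ yields $\langle h_{i_{k-1}},\mu\rangle\le 0$. (The paper records this as a strict inequality, but only $\le 0$ follows in general, and only $\le 0$ is needed.) The conclusion is packaged slightly differently: the paper appeals to the characterisation of weights of $L(\lambda)$ as those $\nu$ with $u\nu\succeq w_0(\lambda)$ for every $u\in W$ and exhibits a violating $u$, whereas you use that $\mu\in W\lambda$ is extremal together with the standard fact that an extremal weight sits at an endpoint of each of its root strings (for instance because $\lVert\mu-\alpha_{i_{k-1}}\rVert^2=\lVert\lambda\rVert^2-2(\mu,\alpha_{i_{k-1}})+2>\lVert\lambda\rVert^2$ would contradict $\lVert\nu\rVert\le\lVert\lambda\rVert$ for weights $\nu$). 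Both closings encode the same convexity of the weight polytope; yours is arguably a touch more self-contained.
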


\begin{proof}
By \cite[Proposition 21.3]{Humph}, it suffices to find $u\in W$ such that $u(\mu-\alpha_{i_{k-1}}) \nsucc w_0(\lambda)$.  Take $u=s_{i_{k-1}}$ and, noting that $(\mu,\alpha_{i_{k-1}})<0$, the result follows.
\end{proof}

Recall that $v_\lambda$ and $v_\lambda^{low}$ are the highest and lowest weight elements of the canonical basis $\bB(\lambda)$.

\begin{Lemma}\cite[Comment 5.10]{KT}\label{lemma:KT}
We have $\st_{w_0}(v_\lambda^{low})=v_\lambda$.
\end{Lemma}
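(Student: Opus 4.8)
Both $\st_{w_0}(v_\lambda^{low})$ and $v_\lambda$ lie in the weight space $L(\lambda)_\lambda$, which is one‑dimensional, so at once $\st_{w_0}(v_\lambda^{low})=c\,v_\lambda$ for a unique $c\in\CC(q)^\times$; the entire content of the lemma is that $c=1$. The plan is to prove this by peeling off a reduced expression $w_0=s_{i_1}\cdots s_{i_n}$ from the right and tracking the action of each $\st_{i_k}$ on extremal weight vectors, the point being that every step moves in the ``favourable'' direction where $\st_i$ reduces to a pure divided power of $E_i$.

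Concretely, for $1\le k\le n+1$ set $v^{(k)}:=\st_{i_k}\cdots\st_{i_n}(v_\lambda^{low})$, so $v^{(n+1)}=v_\lambda^{low}$ and $v^{(1)}=\st_{w_0}(v_\lambda^{low})$. Then $v^{(k)}$ lies in the one‑dimensional weight space of the extremal weight $\nu_k:=s_{i_k}\cdots s_{i_n}(w_0(\lambda))$, hence $v^{(k)}=c_k\,b_{\nu_k}$, where $b_{\nu_k}$ denotes the canonical basis element of $\bB(\lambda)$ of weight $\nu_k$ (note $b_{\nu_{n+1}}=v_\lambda^{low}$, $b_{\nu_1}=v_\lambda$, and $c_{n+1}=1$, $c_1=c$). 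It therefore suffices to prove $\st_{i_k}(b_{\nu_{k+1}})=b_{\nu_k}$ for each $k$, and downward induction finishes the job.

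For the key step, write $i=i_k$, $w=s_{i_{k+1}}\cdots s_{i_n}$, $\nu=\nu_{k+1}=w(w_0(\lambda))$. Since $s_i w$ is longer than $w$, the root $w^{-1}(\alpha_i)$ is positive, so $\langle h_i,\nu\rangle=\langle w^{-1}h_i,\,w_0(\lambda)\rangle\le 0$ because $w_0(\lambda)$ is antidominant and $w^{-1}h_i$ is a positive coroot; put $m=-\langle h_i,\nu\rangle\ge 0$. An extremal weight vector is annihilated by $F_i$ whenever the $h_i$‑pairing of its weight is $\le 0$, so $F_i b_\nu=0$; feeding $\mu=\nu$ into the definition $\st_i\sone_\mu=\sum_{-a+b=(\mu,\alpha_i)}(-q)^{-b}E_i^{(a)}F_i^{(b)}\sone_\mu$, only the $b=0$ summand survives and $\st_i(b_\nu)=E_i^{(m)}b_\nu$. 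The remaining claim $E_i^{(m)}b_\nu=b_{\nu_k}$ I would justify as follows: the left‑hand side lies in $L(\lambda)_{\nu_k}=\CC(q)\,b_{\nu_k}$, so it equals $f(q)\,b_{\nu_k}$; it lies in the $\ZZ[q,q^{-1}]$‑span of the canonical basis (which is stable under $E_i^{(m)}$), so $f\in\ZZ[q,q^{-1}]$; it is bar‑invariant, since $b_\nu$ and $E_i^{(m)}$ are, so $f$ is bar‑invariant; and $E_i^{(m)}b_\nu\equiv\tilde{E}_i^{\,m}b_\nu=b_{\nu_k}$ modulo the crystal lattice, by Kashiwara's comparison of $E_i^{(m)}$ with $\tilde{E}_i^{\,m}$ at the bottom of an $i$‑string, so $f$ is regular at $q=0$ with value $1$ there. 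A bar‑invariant Laurent polynomial that is regular at $q=0$ with value $1$ there must be the constant $1$ (it is then a polynomial both in $q$ and in $q^{-1}$), so $f=1$ and $\st_i(b_\nu)=b_{\nu_k}$, giving $c_k=c_{k+1}$ and hence $c=1$.

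The only part that is not routine bookkeeping is the very last one: upgrading ``$\st_i(b_\nu)$ is a nonzero scalar times the extremal canonical basis vector $b_{\nu_k}$'' to ``the scalar equals $1$''. Without this crystal/canonical‑basis input one only obtains $\st_{w_0}(v_\lambda^{low})=\pm q^{r}v_\lambda$ for some $r\in\ZZ$. As a sanity check, in rank one the whole assertion is exactly the identity $E_i^{(m)}F_i^{(m)}v_\lambda=v_\lambda$ with $m=\langle h_i,\lambda\rangle$, which follows directly from the commutation of divided powers; alternatively one could run an induction on the rank of $\fg$ using the factorization $w_0=u\,w_0^J$ of the proof of Proposition~\ref{prop:mainrel}, but the step‑by‑step argument above seems the most economical.
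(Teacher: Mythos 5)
The paper does not prove this lemma; it is quoted from Kamnitzer--Tingley \cite{KT}, Comment 5.10. Your argument is a complete, self-contained proof, and it is correct. The key observations are all sound: peeling off a reduced word for $w_0$ from the right, the iterate $\st_{i_k}\cdots\st_{i_n}(v_\lambda^{low})$ lives in the one-dimensional extremal weight space $L(\lambda)_{\nu_k}$ with $\nu_k = s_{i_k}\cdots s_{i_n}(w_0\lambda)$; because $w^{-1}(\alpha_{i_k})$ is a positive root for $w = s_{i_{k+1}}\cdots s_{i_n}$, the pairing $\langle h_{i_k},\nu_{k+1}\rangle$ is $\le 0$, so $F_{i_k}$ kills the extremal vector and the formula for $\st_{i_k}$ collapses to a single divided power $E_{i_k}^{(m_k)}$; and the scalar is then pinned to $1$ by bar-invariance, integrality, and the crystal-lattice reduction. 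In fact the last step can be streamlined slightly: since $b_{\nu_{k+1}}$ sits at the bottom of an \emph{irreducible} $i$-string (both endpoints being one-dimensional extremal weight spaces), the rank-one identity $E_i^{(j)}F_i^{(m)}u_0 = F_i^{(m-j)}u_0$ shows $E_i^{(j)}b_{\nu_{k+1}} = \tilde e_i^{\,j}b_{\nu_{k+1}}$ exactly, not merely modulo $q\mathcal L$, so $E_i^{(m)}b_{\nu_{k+1}}$ is a bar-invariant element of the $\ZZ[q,q^{-1}]$-lattice congruent to $\bar b_{\nu_k}$ and hence equals $b_{\nu_k}$ without any further discussion of the scalar $f$. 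Compared with simply invoking \cite{KT}, your argument has the virtue of being phrased entirely in the convention the paper already fixes for $\st_i = \st''_{i,-1}$, so no normalisation translation is needed; the rank-one sanity check is also exactly right. The alternative rank induction you mention does not close as cleanly, since $w_0 (w_0^J)^{-1}$ is not a parabolic longest element, so the letter-by-letter argument is indeed the better route.
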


\begin{Proposition}\label{prop:lowestwt}
The equivalence $\Theta\bone_{w_0(\lambda)}:D^b(\cL_{w_0(\lambda)})\to D^b(\cL_{\lambda})$ satisfies $\kk_{low}\mapsto\kk_{high}$, where both are considered as complexes concentrated in degree zero.  In particular, under the equivalences $\cL_\lambda \cong \cL_{w_0(\lambda)}\cong \kk\mmod_\ZZ$,  $\Theta\bone_{w_0(\lambda)}$ is isomorphic to the identity autofunctor of $D^b(\kk\mmod_\ZZ)$.
\end{Proposition}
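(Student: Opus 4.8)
The plan is to track the image of $\kk_{low}$ through the composition $\Theta = \Theta_{i_1}\cdots\Theta_{i_n}$ one Rickard complex at a time, starting from the lowest weight space $\cL_{w_0(\lambda)}$. The key observation is that at the lowest weight space, many of the Chevalley functors appearing in a Rickard complex $\Theta_{i_k}$ must vanish, so the complex collapses to a single term. Concretely, write $\mu^{(k)} = s_{i_k}\cdots s_{i_n}(w_0(\lambda))$ (so $\mu^{(n+1)} = w_0(\lambda)$ and $\mu^{(1)} = \lambda$), and apply $\Theta_{i_n}, \Theta_{i_{n-1}}, \ldots, \Theta_{i_1}$ in turn. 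At the $k$-th step we are applying $\Theta_{i_{k}}\bone_{\mu^{(k+1)}}$; since $w_0=s_{i_1}\cdots s_{i_n}$ is reduced, the partial product $s_{i_k}\cdots s_{i_n}$ sends $\alpha_{i_{k}}$ to a negative root evaluated against $w_0(\lambda)$ in a controlled way. I will check that $(\mu^{(k+1)})_{i_{k}} = \langle h_{i_{k}}, \mu^{(k+1)}\rangle \le 0$, so that $(\Theta_{i_{k}}\bone_{\mu^{(k+1)}})^{-s} = \sE_{i_{k}}^{(-(\mu^{(k+1)})_{i_{k}}+s)}\sF_{i_{k}}^{(s)}\bone_{\mu^{(k+1)}}\langle -s\rangle$. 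Applied to the simple object sitting in weight $\mu^{(k+1)}$, the terms with $s\ge 1$ involve $\sF_{i_{k}}$ landing in weight $\mu^{(k+1)} - \alpha_{i_{k}}$, and Lemma \ref{lem:techbit} says exactly this weight space of $L(\lambda)$ is zero (note $\mu^{(k+1)} = \mu$ in the notation there with $k$ replaced appropriately, and $\alpha_{i_{k}} = \alpha_{i_{k-1}}$ in that lemma's indexing after shifting). Hence only the $s=0$ term survives, and $\Theta_{i_{k}}\bone_{\mu^{(k+1)}}$ acts on this object as $\sE_{i_{k}}^{(-(\mu^{(k+1)})_{i_{k}})}\bone_{\mu^{(k+1)}}$, a genuine functor (no complex), landing in $\cL_{\mu^{(k)}}$.

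Next I would identify this surviving object. Since $\cL = \cL(\lambda)$ is the minimal categorification of the simple $L(\lambda)$, each weight space $\cL_{\mu^{(k)}}$ categorifies the weight space $L(\lambda)_{\mu^{(k)}}$, which is spanned by a single canonical basis vector when $\mu^{(k)}$ lies on the $W$-orbit of $\lambda$ (these are extremal weights, so the weight space is one-dimensional). Thus at each stage the image is, up to grading shift, the unique (up to shift) simple object of $\cL_{\mu^{(k)}}$. Feeding this through all $n$ steps, $\Theta\bone_{w_0(\lambda)}$ sends $\kk_{low}$ — the simple generator of $\cL_{w_0(\lambda)} \cong \kk\mmod_\ZZ$ — to a grading shift of $\kk_{high}$, the simple generator of $\cL_\lambda \cong \kk\mmod_\ZZ$. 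To pin down that the grading shift is trivial (i.e. that it is $\kk_{high}$ on the nose, not $\kk_{high}\langle m\rangle$ for some $m\ne 0$), I would pass to Grothendieck groups: the functor $\Theta\bone_{w_0(\lambda)}$ categorifies Lusztig's operator $\st_{w_0}\sone_{w_0(\lambda)}$, and by Lemma \ref{lemma:KT} we have $\st_{w_0}(v_\lambda^{low}) = v_\lambda$ exactly — no power of $q$. Since $[\kk_{low}] = v_\lambda^{low}$ and $[\kk_{high}] = v_\lambda$ under the identification $[\cL]_{\CC(q)} \cong L(\lambda)$, the class of $\Theta(\kk_{low})$ equals $v_\lambda = [\kk_{high}]$, forcing the shift to be zero. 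The final sentence of the proposition is then immediate: under $\cL_\lambda \cong \cL_{w_0(\lambda)} \cong \kk\mmod_\ZZ$, the equivalence $\Theta\bone_{w_0(\lambda)}$ of $D^b(\kk\mmod_\ZZ)$ sends the generator to the generator in degree zero, so it is isomorphic to the identity (any $\kk$-linear graded exact autoequivalence of $D^b(\kk\mmod_\ZZ)$ fixing the simple module is the identity up to nothing).

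The main obstacle I anticipate is the bookkeeping in the collapsing argument: verifying that at \emph{every} stage $k$ the relevant weight $(\mu^{(k+1)})_{i_{k}}$ is genuinely $\le 0$ so that the "$\mu_i \le 0$" branch of the Rickard complex applies, and that Lemma \ref{lem:techbit} is invoked with the correct indices (the lemma is stated with $\mu = s_{i_k}\cdots s_{i_n}(w_0(\lambda))$ and kills $L(\lambda)_{\mu - \alpha_{i_{k-1}}}$, which is precisely the weight that the $s\ge 1$ terms of $\Theta_{i_{k-1}}\bone_{\mu}$ would land in). One must be careful that the reducedness of $w_0 = s_{i_1}\cdots s_{i_n}$ is what guarantees the sign condition — this is a standard fact about reduced words and the action on extremal weights, and I would cite \cite[Proposition 21.3]{Humph} or argue directly as in the proof of Lemma \ref{lem:techbit}. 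Everything else (one-dimensionality of extremal weight spaces, the Grothendieck group computation) is routine given the cited results.
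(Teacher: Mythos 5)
Your proof is correct and takes a genuinely different route from the paper's.

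Both proofs share the same skeleton: iterate $\Theta_{i_n}, \Theta_{i_{n-1}}, \ldots, \Theta_{i_1}$ on $\kk_{low}$, use Lemma~\ref{lem:techbit} at each step to show the intermediate image stays in a single extremal weight category in homological degree zero, and finally use Lemma~\ref{lemma:KT} on the Grothendieck group to nail down the internal grading shift. The difference is in \emph{how} the homological-degree-zero claim is established at each step. The paper invokes \cite[Theorem 5.24]{CR} to produce the universal morphism $R_X:\cL(m)\to\cL$ from the $\fsl_2$ minimal categorification, then imports \cite[Theorem 6.6]{CR} (the $\fsl_2$ t-exactness theorem) through $R_X$; extending from simple $X$ to general $X$ with $\sF_iX=0$ then needs a Jordan--H\"older argument. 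You instead observe directly that every term $\sE_{i_k}^{(-\mu_{i_k}+s)}\sF_{i_k}^{(s)}\langle-s\rangle$ of the Rickard complex with $s\geq 1$ vanishes on the whole weight category $\cL_{\mu^{(k+1)}}$ (since the target weight $\mu^{(k+1)}-\alpha_{i_k}$ is not a weight of $L(\lambda)$ by Lemma~\ref{lem:techbit}), so the complex collapses to the single functor $\sE_{i_k}^{(-\mu_{i_k})}$ applied to an honest object. This is more elementary: it replaces the heavy machinery of \cite[Theorems 5.24, 6.6]{CR} with a purely formal observation about the Rickard complex, essentially re-deriving the $\fsl_2$ statement for free in the special (lowest-weight) situation you are actually in. The only loose thread is your assertion that $(\mu^{(k+1)})_{i_k}\leq 0$, which you defer rather than verify; this does follow from the reducedness of the word and the dominance of $\lambda$ --- one has $(\mu^{(k+1)},\alpha_{i_k})=-(\lambda,\,s_{i_1}\cdots s_{i_{k-1}}(\alpha_{i_k}))\leq 0$, since the right-hand root is positive by the standard fact about reduced decompositions --- but you should spell this out (it is where the reducedness of $w_0=s_{i_1}\cdots s_{i_n}$ enters, and it also guarantees you are consistently using the $\mu_i\leq 0$ branch of the Rickard complex definition). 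With that gap filled, your argument is complete and somewhat lighter than the one in the paper; note, however, that the paper's remark that ``we use the fact that $\Theta_i$ is invertible throughout'' still applies to the ``in particular'' sentence of the proposition, which does rely on $\Theta\bone_{w_0(\lambda)}$ being an equivalence.
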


\begin{proof}
Consider first the case $\fg=\fsl_2$.  On the minimal categorification of highest weight $m$, we have that $\Theta\bone_{-m}(\kk_{low})=\kk_{high}\langle \ell \rangle$ for some $\ell$ by \cite[Theorem 6.6]{CR}.  Since $[\Theta\bone_{-m}]=\st_{1}\sone_{-m}$, by Lemma \ref{lemma:KT} we conclude that $\ell=0$, and hence $\Theta\bone_{-m}(\kk_{low})=\kk_{high}$.

For general $\fg$, suppose $X \in \cL_\nu$ is simple and  $\sF_iX=0$ for some $i \in I$.  Consider $\cL$ as a categorical representation of $\fsl_2$ by restriction to the $i$-th root subalgebra. 
Then for some $m$ we have a morphism of categorical $\fsl_2$ representations
$
R_X:\cL(m) \to \cL,
$
such that $R_X(\kk_{low})=X$ \cite[Theorem 5.24]{CR}.  

The functor $R_X$ is equivariant for the categorical $\fsl_2$ action on $\cC$ determined by $\sE_i,\sF_i$ (in fact it is strongly equivariant in the sense of \cite[Definition 3.1]{LoWe}), and hence commutes with $\Theta_i\bone_\nu$.  Therefore we have that 
\begin{align*}
\Theta_i\bone_{\nu}(X) &\cong  \Theta_i\bone_{\nu}(R_X(\kk_{low})) \\
&\cong R_X(\Theta_i\bone_{\nu}(\kk_{low})) \\
&\cong R_X(\kk_{high}) \in \cL,
\end{align*}
and so $\Theta_i\bone_{\nu}(X)$ is in homological degree zero.  
It follows that in the case when $X$ is not necessarily simple (but still assume that $\sF_iX=0$), $\Theta_i\bone_{\nu}(X)$ is still in homological degree zero.  Indeed, by induction on the length of a Jordan-H\"older filtration of $X$ one deduces this since $\cL \subset D^b(\cL)$ is extension closed. 

%
%
%
Now we study $\Theta\bone_{w_0(\lambda)}$ applied to $\kk_{low}$.  For $k=2,\hdots,n$, by Lemma \ref{lem:techbit}, $$\sF_{i_{k-1}}(\Theta_{i_k}\cdots\Theta_{i_n}\bone_{w_0(\lambda)}(\kk_{low}))=0.$$ By the previous paragraph, it follows that $\Theta_{i_{k-1}}\cdots\Theta_{i_n}\bone_{w_0(\lambda)}(\kk_{low})$ is in homological degree zero, and in particular, $\Theta\bone_{w_0(\lambda)}(\kk_{low})$ is supported in homological degree zero.  Since in addition $$[\Theta\bone_{w_0(\lambda)}]=\st_{w_0}\sone_{w_0(\lambda)},$$ by Lemma \ref{lemma:KT} we conclude that $\Theta\bone_{w_0(\lambda)}(\kk_{low})\cong\kk_{high}$.
\end{proof}

\begin{Theorem}\label{thm:mincat}
Let $\lambda \in X_+$ and set $\cL=\cL(\lambda)$.  For any $\mu \in X$, $$\Theta\bone_\mu[-n]:D^b(\cL_\mu) \to D^b(\cL_{w_0(\mu)})$$ is t-exact, where $n=ht(\mu-w_0(\lambda))$.
\end{Theorem}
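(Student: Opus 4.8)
The plan is to prove t-exactness of $\Theta\bone_\mu[-n]$ by descending induction on $n = \mathrm{ht}(\mu - w_0(\lambda))$, using the commutation relation from Proposition \ref{prop:mainrel} to push $\Theta_{w_0}$ past a Chevalley functor and thereby reduce the height. First I would handle the base case $n = 0$, i.e. $\mu = w_0(\lambda)$: this is exactly Proposition \ref{prop:lowestwt}, which identifies $\Theta\bone_{w_0(\lambda)}$ with the identity on $D^b(\kk\mmod_\ZZ)$, hence t-exact with no shift (and indeed $[-n] = [0]$ here). More generally, the top of the weight poset has to be treated slightly carefully: if $\mu$ is such that $\cL_\mu = 0$ there is nothing to prove, and otherwise one uses that the weights $\mu$ with $\cL_\mu\neq 0$ and $\mathrm{ht}(\mu - w_0(\lambda))$ maximal are the highest weight $\lambda$; there $\Theta\bone_\lambda$ is (by the braid relation and Proposition \ref{prop:lowestwt} applied to $w_0^{-1} = w_0$, or directly) again an equivalence $D^b(\kk\mmod_\ZZ)\to D^b(\kk\mmod_\ZZ)$, t-exact up to the shift $[n]$ with $n = \mathrm{ht}(\lambda - w_0(\lambda))$.

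For the inductive step, fix $\mu$ with $\cL_\mu \neq 0$ and $\mu \neq \lambda$. Then there is some $i \in I$ with $\sE_i\cL_\mu \neq 0$, so every object $M$ of $\cL_\mu$ embeds into (or, better, since $\cL = \cL(\lambda)$ is a minimal categorification, is a quotient/sub of) an object in the image of $\sF_i$ from $\cL_{\mu + \alpha_i}$ — more precisely, I would use that the Chevalley functors $\sF_i \colon \cL_{\mu+\alpha_i} \to \cL_\mu$ together with the $\sE_i$ are enough to generate $D^b(\cL_\mu)$ from the higher weight spaces, so it suffices to check t-exactness of $\Theta\bone_\mu[-n]$ after precomposing with $\sF_i\bone_{\mu+\alpha_i}$ for a suitable $i$ (one with $(\mu,\alpha_i) > 0$, so that $\sF_i$ is "injective enough" on the relevant piece — this is where Lemma \ref{lem:techbit}-type positivity is used). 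By Proposition \ref{prop:mainrel},
\begin{align*}
\Theta_{w_0}\sF_i\bone_{\mu+\alpha_i} \cong \sE_{\tau(i)}\Theta_{w_0}\bone_{\mu+\alpha_i}[1]\langle -(\mu_i + 2) \rangle
\end{align*}
(indices shifted appropriately), and $\sE_{\tau(i)}$ and $\sF_i$ are exact functors of abelian categories, hence t-exact. Writing $n' = \mathrm{ht}((\mu+\alpha_i) - w_0(\lambda)) = n - 1$, the inductive hypothesis says $\Theta_{w_0}\bone_{\mu+\alpha_i}[-n']$ is t-exact; combining with the displayed isomorphism and the shift bookkeeping $n' + 1 = n$ gives that $\sE_{\tau(i)}\Theta_{w_0}\bone_{\mu+\alpha_i}[1][-n] \cong \Theta_{w_0}\sF_i\bone_{\mu+\alpha_i}[-n]$ is t-exact. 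A standard argument then lets one "cancel" the exact functor $\sF_i$ on the source: because every object of $\cL_\mu$ has a finite resolution built from the essential image of $\sF_i\bone_{\mu+\alpha_i}$ (using that $\sE_i$ detects everything and the adjunction counit $\sF_i\sE_i \to \mathrm{id}$ behaves well by Definition \ref{def:catn}(2)), t-exactness on that image forces t-exactness of $\Theta\bone_\mu[-n]$.

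The main obstacle is the last cancellation step: deducing t-exactness of a triangulated functor $G \colon D^b(\cA) \to D^b(\cB)$ from t-exactness of $G \circ \sF$ for a fixed exact $\sF \colon \cA' \to \cA$. This needs $\sF$ to be, in a suitable sense, "resolving" — every object of $\cA$ sits in a bounded complex of objects of $\sF(\cA')$, with controlled cohomological amplitude, and one runs an induction on the length of such a resolution using the long exact cohomology sequence (the key point being that $G$ is already known to be an equivalence by Theorems \ref{thm:CR}, \ref{thm:CK}, so one only needs the one-sided statement $G(\cA^{\leq 0}) \subseteq \cB^{\leq 0}$, the other inclusion following by applying the argument to $G^{-1} = \Theta_{w_0}'\bone_{w_0(\mu)}$, whose analogous commutation relations hold by adjunction). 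I would isolate this as a general lemma about derived functors and exact "resolving subcategories," phrased so that it applies simultaneously at every weight. The choice of $i$ with $(\mu,\alpha_i)>0$ (equivalently $\mu_i > 0$), guaranteeing that $\sF_i$ has enough image and no $[1]$-amplitude issues, is exactly Lemma \ref{lem:techbit}-style combinatorics and should be routine once set up; the bookkeeping of the $\langle - \rangle$ grading shifts is automatic since t-exactness ignores $\langle 1 \rangle$.
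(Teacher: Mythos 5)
There is a genuine gap in the inductive step: the shift bookkeeping is wrong, and this is forced by the direction you chose. You write $n' = \mathrm{ht}\bigl((\mu+\alpha_i) - w_0(\lambda)\bigr) = n - 1$, but in fact $n' = n + 1$, since adding $\alpha_i$ raises the height by $1$. With the correct value, the inductive hypothesis gives that $\Theta_{w_0}\bone_{\mu+\alpha_i}[-(n+1)]$ is t-exact, whereas Proposition \ref{prop:mainrel} gives
\begin{equation*}
\Theta_{w_0}\bone_\mu[-n]\circ \sF_i \;\cong\; \sE_{\tau(i)}\circ\Theta_{w_0}\bone_{\mu+\alpha_i}[1-n]\langle\cdots\rangle \;\cong\; \sE_{\tau(i)}\circ\bigl(\Theta_{w_0}\bone_{\mu+\alpha_i}[-(n+1)]\bigr)[2]\langle\cdots\rangle,
\end{equation*}
so the right-hand side is t-exact only after a further $[2]$ shift, and the induction does not close. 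The $[1]$ in Proposition \ref{prop:mainrel} only cancels the height step if you push $\Theta_{w_0}$ past $\sE_i$ from \emph{below}, i.e.\ use $\Theta_{w_0}\sE_i\bone_{\mu-\alpha_i}\cong\sF_{\tau(i)}\Theta_{w_0}\bone_{\mu-\alpha_i}[1]\langle\cdots\rangle$ in an \emph{ascending} induction with $\mathrm{ht}\bigl((\mu-\alpha_i)-w_0(\lambda)\bigr)=n-1$. This is exactly what the paper does.

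Even after fixing the direction, your proof still relies on a ``cancellation'' lemma (from t-exactness of $G\circ\sF_i$ deduce t-exactness of $G$) that is both unproven and problematic. A fixed Chevalley functor $\sF_i\bone_{\mu+\alpha_i}$ is not ``resolving'' in your sense in general: the weight categories of a minimal categorification can have infinite global dimension (already $\cL(2)_0 \cong \kk[x]/(x^2)\mmod$ for $\fsl_2$), so one cannot resolve every object by a bounded complex of objects in the essential image of $\sF_i$. The paper avoids this issue entirely. Its inductive statement is not ``$\Theta\bone_\mu[-n]$ is t-exact'' but rather the concrete claim
\begin{equation*}
\Theta\bigl(\sE_{i_1}\cdots\sE_{i_\ell}(\kk_{low})\bigr)[-n]\;\cong\;\sF_{\tau(i_1)}\cdots\sF_{\tau(i_\ell)}(\kk_{high})\langle k\rangle,
\end{equation*}
proved by ascending induction on $n$ peeling off $\sE_{i_1}$ via Proposition \ref{prop:mainrel}. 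Since every indecomposable projective in $\cL_\mu$ is a summand of such an object, this shows $\Theta\bone_\mu[-n]$ sends projectives to projectives, and t-exactness then follows from the standard fact that a derived equivalence carrying projectives to projectives is t-exact --- no resolving hypothesis needed. Finally, your claim that the case $\mu=\lambda$ ``follows from Proposition \ref{prop:lowestwt} applied to $w_0^{-1}=w_0$, or directly'' is not justified as stated; Proposition \ref{prop:lowestwt} only computes $\Theta$ at the \emph{lowest} weight, and identifying $\Theta\bone_\lambda$ would require the full-twist computation (cf.\ Lemma \ref{lem:fulltwist}), which you do not invoke. Lemma \ref{lem:techbit} is likewise not the ingredient you need for the inductive step; in the paper it is used only inside the proof of Proposition \ref{prop:lowestwt}.
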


\begin{proof}
Consider  $P=\sE_{i_1}\cdots \sE_{i_\ell}(\kk_{low}) \in \cL_\mu$.  We will first prove by induction on $n$ that there exists an integer $k$ such that 
\begin{align}\label{eq:ind}
\Theta(P)[-n] \cong \sF_{j_1}\cdots \sF_{j_\ell}(\kk_{high})\langle k \rangle \in \cL_{w_0(\mu)},
\end{align}
where $j_r=\tau(i_{r})$.

The base case when $n=0$ follows by Proposition \ref{prop:lowestwt}.  For the inductive step write $P=\sE_{i_1}(Q)$. Note that $Q\in \cL_{\mu-\alpha_{i_1}}$.  By Proposition \ref{prop:mainrel} we have that 
\begin{align*}
\Theta\bone_\mu(P)[-n] &= \Theta \sE_{i_1}\bone_{\mu-\alpha_{i_1}}(Q)[-n] \\
&\cong \sF_{\tau(i_1)}\Theta\bone_{\mu-\alpha_{i_1}}(Q)[-n+1]\langle (\mu-\alpha_{i_1},\alpha_{i_1}) \rangle.
\end{align*}
By hypothesis $$\Theta\bone_{\mu-\alpha_{i_1}}(Q)[-n+1] \cong \sF_{j_2}\cdots \sF_{j_\ell}(\kk_{high})\langle k \rangle \in \cL_{w_0(\mu-\alpha_{i_1})}$$ for some $k$, and hence Equation (\ref{eq:ind}) follows.

Since up to grading shift, any projective indecomposable object in $\cL_\mu$, respectively $\cL_{w_0(\mu)}$, is a summand of an object of the form $\sE_{i_1}\cdots \sE_{i_\ell}(\kk_{low})$ (respectively $\sF_{j_1}\cdots \sF_{j_\ell}(\kk_{high})$), it follows that $\Theta\bone_\mu[-n]$ takes projective objects in $\cL_\mu$ to projective objects in $\cL_{w_0(\mu)}$.  Since $\Theta\bone_\mu[-n]$ is a derived equivalence it follows that it is t-exact.

\end{proof}

\begin{Remark}
Theorem \ref{thm:mincat} is a generalisation of  \cite[Theorem 6.6]{CR}, which covers the $\fsl_2$ case.  Note that \cite[Theorem 6.6]{CR} is crucial in the work of Chuang and Rouquier, since it's one of the main technical results needed to prove that Rickard complexes are invertible.  Our proof in the general case follows a completely different approach, but   it does not give a new proof in the case of $\fsl_2$. Indeed we use \cite[Theorem 6.6]{CR} explicitly in the proof of Proposition \ref{prop:lowestwt}, and more generally we use the fact the $\Theta_i$ is invertible throughout.  
\end{Remark}

\begin{Corollary}\label{cor:mainthm}
Suppose $\cC$ is an isotypic categorification of type $\lambda$, for some $\lambda \in X_+$, and let $\mu \in X$.  Then $\Theta\bone_\mu[-n]:D^b(\cC_\mu) \to D^b(\cC_{w_0(\mu)})$ is a t-exact equivalence, where $n=ht(\mu-w_0(\lambda))$.  
\end{Corollary}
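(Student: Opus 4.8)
The plan is to reduce Corollary~\ref{cor:mainthm} to Theorem~\ref{thm:mincat} using the structure theorem for isotypic categorifications. By Lemma~\ref{lem:isocat}, since $\cC$ is an isotypic categorification of type $\lambda$, there is an abelian category $\cA$ and an equivalence $\cC \cong \cL(\lambda) \otimes_\kk \cA$ compatible with the categorical $U_q(\fg)$-structure; in particular each weight category satisfies $\cC_\mu \cong \cL_\mu \otimes_\kk \cA$, and the Chevalley functors act as $\sE_i \otimes \bone_\cA$, $\sF_i \otimes \bone_\cA$. Consequently the Rickard complexes, being built functorially out of (divided powers of) the Chevalley functors and the unit/counit maps of their adjunctions, also factor: $\Theta_i\bone_\mu$ on $\cC$ is identified with $(\Theta_i\bone_\mu)_{\cL} \boxtimes \Id_{D^b(\cA)}$, and hence so is the composite $\Theta = \Theta_{w_0}$.

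The key steps, in order, are as follows. First I would record precisely the compatibility in Lemma~\ref{lem:isocat}: the equivalence $\cC \cong \cL(\lambda)\otimes_\kk\cA$ intertwines $\sE_i,\sF_i$ (and their left/right adjoints, hence their divided powers) with $\sE_i\otimes\bone_\cA$ etc. Second, I would note that the differentials in the Rickard complex $\Theta_i\bone_\mu$ are given by counits of the biadjunctions between $\sE_i$ and $\sF_i$; since the adjunction data on $\cC$ is the Deligne tensor of the adjunction data on $\cL(\lambda)$ with the identity adjunction on $\cA$, the whole complex of functors $\Theta_i\bone_\mu$ on $\cC$ is $(\Theta_i\bone_\mu)_\cL \otimes \bone_\cA$ under the identification of $D^b(\cC_\mu)$-level functors. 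Third, composing over a reduced word for $w_0$ gives $\Theta\bone_\mu \cong (\Theta\bone_\mu)_\cL \otimes \bone_\cA$ as functors $D^b(\cC_\mu) \to D^b(\cC_{w_0(\mu)})$. Fourth, by Theorem~\ref{thm:mincat}, $(\Theta\bone_\mu)_\cL[-n] : D^b(\cL_\mu) \to D^b(\cL_{w_0(\mu)})$ is t-exact, with $n = ht(\mu - w_0(\lambda))$. Finally, since tensoring a t-exact functor with the (exact, t-exact) identity functor on $D^b(\cA)$ keeps it t-exact — one checks directly on the standard t-structures of $D^b(\cL_\mu \otimes_\kk \cA)$ that a complex has cohomology in degrees $\leq 0$ (resp. $\geq 0$) iff its image does, because the heart is $\cL_\mu \otimes_\kk \cA$ and $\boxtimes$ is exact in each variable — we conclude $\Theta\bone_\mu[-n]$ is t-exact on $D^b(\cC_\mu)$. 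That it is an equivalence is immediate from Theorems~\ref{thm:CR} and~\ref{thm:CK}.

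The main obstacle I anticipate is the second step: verifying cleanly that the \emph{complexes} $\Theta_i\bone_\mu$ (not merely the Chevalley functors) are compatible with the Deligne tensor decomposition. This requires knowing that the Deligne tensor product of the biadjunction $(\sE_i, \sE_{i,R})$ on $\cL(\lambda)$ with the trivial biadjunction $(\bone_\cA,\bone_\cA)$ on $\cA$ realises the biadjunction of $\sE_i\otimes\bone_\cA$ on $\cL(\lambda)\otimes_\kk\cA$, so that the counit maps — and therefore the differentials $d^s$ — match up. This is essentially functoriality of the Deligne tensor product together with the uniqueness of adjoints, but it is the one place where one must be careful rather than formal. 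An alternative, perhaps slicker, route that sidesteps this: argue as in the proof of Theorem~\ref{thm:mincat} directly, showing $\Theta\bone_\mu[-n]$ sends projectives to projectives. Every projective indecomposable of $\cC_\mu$ is, up to grading shift, a summand of $\sE_{i_1}\cdots\sE_{i_\ell}(\kk_{low}\otimes A)$ for $A$ projective in $\cA$, and the argument of Theorem~\ref{thm:mincat} via Proposition~\ref{prop:mainrel} applies verbatim once one knows $\Theta\bone_{w_0(\lambda)}(\kk_{low}\otimes A) \cong \kk_{high}\otimes A$ (up to shift), which follows from Proposition~\ref{prop:lowestwt} applied factor-wise; then being a derived equivalence preserving projectives, it is t-exact. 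I would present the tensor-factorisation proof as primary and mention the projective-object argument as a remark.
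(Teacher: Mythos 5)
Your proposal is correct and takes essentially the same approach as the paper: both reduce to Theorem~\ref{thm:mincat} via Lemma~\ref{lem:isocat} and the factorisation of the Rickard complex across the Deligne tensor product $\cC \cong \cL(\lambda)\otimes_\kk\cA$. The paper's own proof is a one-liner that leaves the compatibility $\Theta\bone_\mu \cong (\Theta\bone_\mu)_{\cL}\otimes\bone_\cA$ implicit, which is exactly the subtlety you flag and justify (and your alternative projectives-to-projectives argument is a reasonable way to avoid it entirely).
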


\begin{proof}
By Lemma \ref{lem:isocat}, there exists an abelian $\kk$-linear category $\cA$ such that $\cC \cong \cL(\lambda)\otimes_\kk \cA$ as categorical representations.  We have that 
$$
\Theta\bone_\mu[-n](\cL(\lambda)_\mu\otimes_\kk \cA) \cong \Theta\bone_\mu[-n](\cL(\lambda)_\mu) \otimes_\kk \cA \cong \cL(\lambda)_{w_0(\mu)}\otimes_\kk \cA,
$$
proving that $\Theta\bone_\mu[-n]:D^b(\cC_\mu) \to D^b(\cC_{w_0(\mu)})$ is a t-exact equivalence.
\end{proof}

\subsection{$\Theta_{w_0}$ on general categorical representations}\label{sec:mainpf}
In this section we prove that $\Theta_{w_0}$ is a perverse equivalence on an arbitrary categorical representation.
Fix $\mu \in X$ such that $\cC_\mu$ is nonzero.  For ease of notation, set $\cA=\cC_\mu$ and $\cA'=\cC_{w_0(\mu)}$.

Consider a filtration by Serre subcategories
\begin{align*}
0=\cC_0\subset \cC_1\subset \cdots \subset \cC_r=\cC,
\end{align*}
which can be either the Jordan-H\"older filtration (Theorem \ref{thm:JH}) or the isotypic filtration (Remark \ref{rem:isofilts}).  So for every $i$,  $\cC_i$ is a subrepresentation of $\cC$, and $\cC_i/\cC_{i-1}$ is either a simple categorification or an isotypic one.  Define $\lambda_i \in X_+$ by requiring that $[\cC_i/\cC_{i-1}]_{\CC(q)}$ is a representation of type $\lambda_i$.

Construct filtrations of $\cA$ and $\cA'$ by $\cA_i=\cC_i\cap \cA, \cA_i'=\cC_i \cap \cA'$.  These are Serre subcategories of $\cA$ and $\cA'$ respectively.  Let $p:\{0,...,r\}\to \ZZ$ be given by 
$p(i)=ht(\mu-w_0(\lambda_i))$.  

\begin{Theorem}
\label{thm:perveq}
$\Theta_{w_0}\bone_\mu:D^b(\cA) \to D^b(\cA')$ is a perverse equivalence with respect to $(\cA_\bullet, \cA_\bullet',p)$ for either the Jordan-H\"older or the isotypic filtration. 
\end{Theorem}

%
\begin{proof}
Since $\cC_i \subset \cC$ is a categorical subrepresentation, the terms of the functor $\Theta_{w_0}\bone_\mu$ leave $\cC_i$ invariant, and in particular take objects in $\cA_i$ to $\cA_i'$.  By Lemma \ref{lem:trianginv} this implies that $\Theta_{w_0}\bone_\mu (D^b_{\cA_i}(\cA)) \subseteq D^b_{\cA_i'}(\cA')$.

Now, $\cC_i/\cC_{i-1}$ is a simple or isotypic categorification (of type $\lambda_i$).  By Corollary \ref{cor:mainthm}, $\Theta_{w_0}\bone_\mu[-p(i)]$ restricts to an abelian equivalence $\cA_i/\cA_{i-1} \to \cA_i'/\cA_{i-1}'$, i.e. the functor 
$$
\Theta_{w_0}\bone_\mu[-p(i)]:D^b_{\cA_i}(\cA)/D^b_{\cA_{i-1}}(\cA) \to D^b_{\cA'_i}(\cA')/D^b_{\cA'_{i-1}}(\cA')
$$
is a t-exact equivalence.  This shows that $\Theta_{w_0}\bone_\mu$ is a perverse equivalence with respect to $(\cA_\bullet, \cA'_\bullet, p)$.
\end{proof}

\begin{Remark}
The $\fsl_2$ case of Theorem \ref{thm:perveq} appears as \cite[Proposition 8.4]{CRperv}, by a different argument relying on a technical lemma \cite[Lemma 4.12]{CRperv}.  
\end{Remark}

\section{Crystalising the braid group action}\label{sec:cacti}

Already in the work of Chuang and Rouquier, a close connection is established between categorical representation theory and the theory of crystals (although it is not phrased in this language, cf. Proposition \ref{prop:cryst} below).  In this section we describe a new component of this theory.  More precisely, let $V$ be an integrable representation of $U_q(\fg)$.  Recall that Lusztig has defined a braid group action on $V$ \cite{Lusbook}.
In this section we explain how to use our results to ``crystalise'' this braid group action to obtain a cactus group action on the crystal of $V$, recovering the recently discovered action by generalised Sch\"utzenberger involutions. 

\subsection{Cactus groups}\label{sec:cactus}
The cactus group associated to the Dynkin diagram $I$ has several incarnations.  Geometrically, it appears  as the fundamental group of a  space associated to the Cartan subalgebra $\fh$ of $\fg$. Namely, let $\fh^{\text{reg}}\subseteq \fg$ denote the regular elements of $\fh$. The cactus group $C=C_I$ is the $W$-equivariant fundamental group of the real locus of the de Concini-Procesi wonderful compactification of $\fh^{\text{reg}}$ (see \cite{DJS03}, \cite[Section 2]{HKRW} for further details):
$$C=\pi^W_1(\overline{\PP(\fh^{\text{reg}})}(\RR)).$$
There is a surjective map $C \to W$, and the kernel of this map is called the pure cactus group.  In type A it is  the fundamental
group of the Deligne-Mumford compactification of the moduli space of real genus
$0$ curves with $n + 1$ marked points \cite{HK06}. 

The cactus group has a presentation using Dynkin diagram combinatorics. For any subdiagram $J \subseteq I$, recall that $\tau_J:J\to J$ is the diagram automorphism induced by the longest element $w_0^J \in W_J$.
\begin{Definition}
	The \textbf{cactus group} $C=C_I$ is generated by $c_J$, where $J \subseteq I$ is a connected subdiagram, subject to the following relations:
	\begin{itemize}
		\item[(i)] $c_J^2=1$ for all $J \subseteq I$,
		\item[(ii)] $c_Jc_K=c_Kc_J$, if $J \cap K=\emptyset$ and there are no edges connecting any $j \in J$ to any $k\in K$, and
		\item[(iii)] $c_Jc_K=c_Kc_{\tau_K(J)}$ if $J \subseteq K$.
	\end{itemize}
\end{Definition}
The surjective map $C \to W$ mentioned above is given by $c_J \mapsto w_0^J$. We are interested in the cactus group in connection to the theory of crystals. 

A $\fg$-crystal $\bB$ is called \textbf{normal} if it is isomorphic to a disjoint union $\sqcup_\la \bB(\la)$ for some collection of highest weights $\la$. The category of normal $\fg$-crystals has the structure of a coboundary category analogous to the braided tensor category structure on $U_q(\fg)$-representations. It is realized through an ``external'' cactus group action of $C_{A_{n-1}}$ on $n$-tensor products of $\fg$-crystals, described by Henriques and Kamnitzer \cite[Theorems 6,7]{HK06}. 

We are interested in the ``internal'' cactus group action of $C$ on any $\fg$-crystal $\bB$. Both the internal and external actions 
rely on the following combinatorially defined maps, which are generalisations of the partial Sch\"utzenberger involutions in type $A$.

\begin{Definition} \label{def:genSchutz}
The \textit{generalised Sch\"utzenberger involution} $\xi_\la$ on $\bB(\la)$ is the set map defined uniquely by the following properties. For all $b \in \bB(\la)$ and $i \in I$:
	\begin{enumerate}
		\item $\wt(\xi_\la(b))=w_0\wt(b)$,
		\item $\xi_\la \tilde{e}_i(b)=\tilde{f}_{\tau(i)} \xi_\la(b)$,
		\item $\xi_\la \tilde{f}_i(b)=\tilde{e}_{\tau(i)} \xi_\la(b)$.
	\end{enumerate}	
	Note that from (1), $\xi_\la$ swaps the lowest and highest weight element of $\bB(\la)$, and the rest of its behavior is then determined from (2) and (3). The generalised Sch\"utzenberger involution $\xi$ on $\bB=\sqcup_\la \bB(\la)$ is the set map which acts as $\xi_\la$ on each irreducible component $\bB(\la)$.
\end{Definition}

Note that (1) implies that $\xi_\la$ maps the lowest  weight element to the highest weight element (and vice-versa), and then (2) and (3) ensure that it is uniquely defined.  

For $J \subseteq I$, denote by $\bB_J$ the crystal $\bB$ restricted to the subdiagram $J$.  We denote the corresponding Sch\"utzenberger involution by $\xi_J$. 

\begin{Theorem}(\cite[Theorem 5.19]{HKRW})\label{thm:cactact}
	For any $\fg$-crystal $\bB$, the assignment $c_J \mapsto \xi_{J}$ defines a (set-theoretic) action of $C$ on $\bB$.
\end{Theorem}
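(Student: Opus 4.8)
The plan is to check directly that the assignment $c_J \mapsto \xi_J$ respects the three families of defining relations of $C$. Since $\xi$ (and hence the statement) makes sense only for normal crystals, and this is the case relevant to crystals of integrable $U_q(\fg)$-modules, I assume $\bB = \sqcup_\la \bB(\la)$; moreover, as $\xi_L$ acts separately on each $L$-connected component and each relation involves only the subdiagrams $J, K$ and $J\cup K$, it is enough to argue one component at a time. The workhorse is a uniqueness principle, immediate from the structure of normal crystals: for a subdiagram $L\subseteq I$, the map $\xi_L$ is the \emph{unique} bijection of $\bB$ that preserves every $L$-connected component and satisfies $\xi_L\tilde{e}_i = \tilde{f}_{\tau_L(i)}\xi_L$ for all $i \in L$. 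Indeed, such a map must send the $L$-highest weight element of each component to the unique $L$-lowest weight element, and is then determined everywhere else by the intertwining relation; conversely $\xi_L$ has these properties by construction, and $\xi_L\tilde{f}_i = \tilde{e}_{\tau_L(i)}\xi_L$ follows automatically from the crystal axiom $\tilde{e}_i(b)=b' \Leftrightarrow \tilde{f}_i(b')=b$.

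Relations (i) and (ii) are then easy. For $c_J^2 = 1$: the bijection $\xi_J^2$ preserves $J$-components and, since $\tau_J^2 = \mathrm{id}$, commutes with every $\tilde{e}_i$, $i \in J$; it fixes the $J$-highest weight element of each component, hence equals the identity. For $c_Jc_K = c_Kc_J$ when $J, K$ are disjoint with no edge between them: then the operators indexed by $J$ and those indexed by $K$ act ``independently'', so each $(J\cup K)$-component of $\bB$ is an external product of a normal $J$-crystal and a normal $K$-crystal, on which $\xi_J$ acts only through the first factor and $\xi_K$ only through the second; these commute.

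The substantive relation is (iii), $c_Jc_K = c_Kc_{\tau_K(J)}$ for $J \subseteq K$. Using $\xi_K^2 = 1$, this is equivalent to $\xi_K\xi_J\xi_K = \xi_{\tau_K(J)}$, which I would establish by verifying the hypotheses of the uniqueness principle for $L = \tau_K(J)$. From $\xi_K\tilde{e}_i = \tilde{f}_{\tau_K(i)}\xi_K$ for $i\in K$ one sees that conjugation by $\xi_K$ interchanges the crystal operators indexed by $\tau_K(J)$ with those indexed by $\tau_K(\tau_K(J)) = J$, so $\xi_K\xi_J\xi_K$ preserves $\tau_K(J)$-components; and a three-step chain of the intertwining identities gives $\xi_K\xi_J\xi_K\,\tilde{e}_i = \tilde{f}_{\tau_K\tau_J\tau_K(i)}\,\xi_K\xi_J\xi_K$ for every $i\in\tau_K(J)$. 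It remains to confirm the purely combinatorial identity $\tau_{\tau_K(J)} = \tau_K\tau_J\tau_K$ on $\tau_K(J)$, equivalently $w_0^{\tau_K(J)} = w_0^Kw_0^Jw_0^K$, which holds because $W_{\tau_K(J)} = w_0^KW_Jw_0^K$. The uniqueness principle then forces $\xi_K\xi_J\xi_K = \xi_{\tau_K(J)}$.

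I expect the only real difficulty to lie in relation (iii): one must carefully identify the way $\xi_K$ conjugates the $J$-crystal structure into the $\tau_K(J)$-crystal structure, keeping precise track of the three diagram automorphisms $\tau_J$, $\tau_K$, $\tau_{\tau_K(J)}$ and the orientation (weight versus lowest-weight) of each Sch\"utzenberger involution involved. Everything else is a routine application of the uniqueness principle.
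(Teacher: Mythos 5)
The paper does not give its own proof of this statement; it is quoted as \cite[Theorem 5.19]{HKRW}. Your argument is correct and is essentially the standard combinatorial verification found there: a uniqueness principle for bijections that preserve $L$-components and intertwine $\tilde e_i \leftrightarrow \tilde f_{\tau_L(i)}$, reducing relation (iii) to $\xi_K\xi_J\xi_K=\xi_{\tau_K(J)}$ and to the Weyl-group identity $w_0^{\tau_K(J)}=w_0^Kw_0^Jw_0^K$, with (i) and (ii) handled by the same principle and by the product decomposition of $(J\cup K)$-components, respectively.
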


\subsection{The cactus group action arising from Rickard complexes}
We now explain how  cactus group actions arise from categorical representations, analogous to the construction of the crystal on $\Irr(\cC)$ in Proposition \ref{prop:cryst}.

Let $\cC$ be a categorical representation of $U_q(\fg)$.  For any weight $\mu \in X$, by Theorem \ref{thm:perveq} $\Theta_{w_0}\bone_\mu:D^b(\cC_\mu) \to D^b(\cC_{w_0(\mu)})$ is a perverse equivalence, and hence it induces a bijection $\varphi_I1_\mu:\Irr(\cC_\mu) \to \Irr(\cC_{w_0(\mu)})$.  By varying $\mu$  we obtain a bijection 
$
\varphi_I:\Irr(\cC) \to \Irr(\cC).
$  

Now let $J\subseteq I$ be a connected subdiagram, and let $\fg_J \subset \fg$ be the corresponding subalgebra.  By restriction, $\cC$ is also a categorical representation of $U_q(\fg_J)$, and hence by the above discussion we also obtain a bijection $
\varphi_J:\Irr(\cC) \to \Irr(\cC).
$

We will prove that this family of bijections defines an action of the cactus group $\Irr(\cC)$.  First we need the following technical result.  The important point here is just that there exists an integer $n$ such that 
$\st^2_{w_0}\sone_\mu=\pm q^n\sone_\mu$.

\begin{Lemma}\label{lem:fulltwist}
Let $\lambda\in X_+, \mu \in X$, and let $\mu-w_0(\lambda)=\sum_{r=1}^\ell \alpha_{i_r}$, where $i_r\in I$.  Set $j_r=\tau(i_r)$ and 
define $n(\la,\mu)\in\ZZ$ by 
    $$
    n(\la,\mu)=2\left(\sum_{r=1}^\ell \la_{j_r}+1-\sum_{1\leq r\leq s\leq \ell}a_{j_rj_s}+(\la,\rho)\right)
    $$
    Then on  $L(\lambda)_\mu$ we have \begin{align}\label{eq:fulltwist}\st^2_{w_0}\sone_\mu=(-1)^{\langle 2\lambda ,\rho^\vee \rangle}q^{n(\la,\mu)}\sone_\mu.\end{align}
\end{Lemma}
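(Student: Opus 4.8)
The plan is to prove the two facts that together give the statement: first, that $\st_{w_0}^2$ acts on each weight space $L(\lambda)_\mu$ by a scalar of the form $\pm q^{\text{integer}}$ — this is the essential point — and second, to identify that scalar. The scalar claim is the qualitative content; the closed formula $n(\lambda,\mu)$ is then bookkeeping.

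For the first fact, I would pass Proposition \ref{prop:mainrel} to Grothendieck groups (equivalently, invoke \cite[Lemma 5.4]{KT} together with its $\sF$-analogue) to obtain, on any integrable module, intertwining relations of the form
\[
\st_{w_0}E_i\sone_\nu = -q^{c_i(\nu)}F_{\tau(i)}\st_{w_0}\sone_\nu,\qquad \st_{w_0}F_i\sone_\nu = -q^{-c_i(\nu)}E_{\tau(i)}\st_{w_0}\sone_\nu
\]
with $c_i$ an explicit affine-linear function of the weight. Composing these and using $w_0(h_{\tau(i)})=-h_i$, the two signs cancel and one gets $\st_{w_0}^2 E_i\sone_\nu = q^{d_i(\nu)}E_i\st_{w_0}^2\sone_\nu$ for a suitable affine-linear $d_i$, and similarly with $F_i$ in place of $E_i$.

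Now $L(\lambda)$ is irreducible and generated over $U_q^+$ by its one-dimensional lowest weight space $\CC(q)v_\lambda^{low}$, so every vector of $L(\lambda)_\mu$ is a linear combination of monomials $E_{i_1}\cdots E_{i_k}v_\lambda^{low}$ with $\sum_r\alpha_{i_r}=\mu-w_0(\lambda)$. Since $\st_{w_0}^2$ preserves weight spaces and acts by a scalar on $L(\lambda)_{w_0(\lambda)}$, the relation above lets me evaluate $\st_{w_0}^2$ on each such monomial; the resulting prefactor is a product of $q$-powers which telescopes and, using linearity of the $d_i$ and the symmetry $a_{ij}=a_{ji}$, depends only on $\mu$ and $\lambda$ (not on the monomial used to reach $\mu$). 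Hence $\st_{w_0}^2\sone_\mu$ is scalar multiplication, which already proves the qualitative statement $\st_{w_0}^2\sone_\mu=\pm q^n\sone_\mu$. (Alternatively, rescaling $\st_{w_0}^2$ by a suitable weight-dependent monomial in $q$ produces a genuine $U_q(\fg)$-endomorphism of $L(\lambda)$, which is a scalar by Schur's lemma.)

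To pin the scalar down I would use Lemma \ref{lemma:KT}: $\st_{w_0}^2(v_\lambda^{low})=\st_{w_0}(v_\lambda)$. Fixing a reduced word $w_0=s_{i_1}\cdots s_{i_N}$ and applying the $\st_{i_k}$ in turn, at each stage the current vector sits at the top of the relevant $\fsl_2$-string (because $v_\lambda$ is dominant and extremal), and $\st_i$ multiplies the top of a string of length $\ell$ by $(-q)^{-\ell}$, carrying it to the bottom. Thus $\st_{w_0}(v_\lambda)=\big(\prod_{k=1}^N(-q)^{-m_k}\big)v_\lambda^{low}$ with $m_k=\langle h_{i_k},s_{i_{k+1}}\cdots s_{i_N}(\lambda)\rangle$, and as $k$ varies the $m_k$ run over $\{\langle\beta^\vee,\lambda\rangle:\beta\text{ a positive root}\}$ (the standard reduced-word inversion fact), so $\sum_k m_k=\langle 2\rho^\vee,\lambda\rangle=\langle 2\lambda,\rho^\vee\rangle$. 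This produces the sign $(-1)^{\langle 2\lambda,\rho^\vee\rangle}$ and fixes the scalar on $L(\lambda)_{w_0(\lambda)}$. Combining this with the telescoped formula of the previous step and rewriting via $\langle h_{i_r},w_0(\lambda)\rangle=-\lambda_{j_r}$, $a_{i_ri_s}=a_{j_rj_s}$, and $\langle 2\lambda,\rho^\vee\rangle$ in terms of $(\lambda,\rho)$ gives the claimed value $n(\lambda,\mu)=2\big(\sum_r\lambda_{j_r}+1-\sum_{1\le r\le s\le\ell}a_{j_rj_s}+(\lambda,\rho)\big)$.

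The conceptual step — that $\st_{w_0}^2$ is a scalar on each weight space — is short once the intertwining relations are in hand. The main obstacle is the bookkeeping: tracking the precise $q$-powers through the intertwining relations and through the reduced-word computation, and matching the telescoped exponent to the closed form, all while keeping the normalization of the $\st_i$ and the grading/bilinear-form conventions consistent (a wrong sign in $c_i(\nu)$, or in the grading shift, propagates into $n(\lambda,\mu)$). Checking the $\fsl_2$ case $\cL(m)$ by hand is a useful way to fix the conventions before doing the general bookkeeping.
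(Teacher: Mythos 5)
Your proposal is correct and follows essentially the same route as the paper: commute $\st_{w_0}^2$ past the $E_i$'s using the Grothendieck-group shadow of Proposition~\ref{prop:mainrel} (equivalently \cite[Lemma 5.4]{KT}), reduce to the one-dimensional lowest weight space, and telescope — the paper packages the telescoping as an induction on $ht(\mu-w_0(\lambda))$, which is the same computation. The one cosmetic difference is that for the base case the paper simply cites \cite[Equation (7)]{KT} together with Lemma~\ref{lemma:KT} to pin down $\st_{w_0}^2(v_\lambda^{low})$, whereas you rederive the extremal-vector scalar from the reduced-word/$\fsl_2$-string computation; both are fine.
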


\begin{proof}
We will prove the  claim by induction on $ht(\mu-w_0(\la))$.  For $\mu=w_0(\la)$, we have that $\ell=0$ so  $n(\la,\mu)=(\la,\rho)$.  By \cite[Equation (7)]{KT}  $\st_{w_0}(v_\la)=(-1)^{\langle \lambda ,\rho^\vee \rangle}q^{(\la,\rho)}v_\la^{low}$, which, combined with Lemma \ref{lemma:KT}, implies that $\st_{w_0}^2(v_\la^{low})=(-1)^{\langle \lambda ,\rho^\vee \rangle}q^{(\la,\rho)}v_\la^{low}$.  Since $\dim(L(\la)_{w_0(\la)})=1$, this proves the base case.  

Now choose any $\mu$ and suppose (\ref{eq:fulltwist}) holds for any weight $\mu'$ such that $ht(\mu'-w_0(\la))<ht(\mu-w_0(\la))$.  Consider  $v=E_{j_1}\cdots  E_{j_\ell}v_\la^{low} \in L(\la)_\mu$.  First note that by \cite[Lemma 5.4]{KT} we have that
\begin{align}\label{lem:ftcomm}
\st_{w_0}^2E_i=q^2K_i^{-2}E_i\st_{w_0}^2.
\end{align}
Setting $v'=E_{j_2}\cdots  E_{j_\ell}v_\la^{low}$, by induction we have
\begin{align*}
\st_{w_0}^2v &= q^2K_{j_1}^{-2}E_{j_1}\st_{w_0}^2v' \\
&= (q^2K_{j_1}^{-2}E_{j_1})(-1)^{\langle \lambda ,\rho^\vee \rangle}q^{n(\la,\mu-\alpha_{j_1})}v' \\
&= (-1)^{\langle \lambda ,\rho^\vee \rangle}q^{2+n(\la,\mu-\alpha_{j_1})-2(\mu,\alpha_{j_1})}v
\end{align*}
One checks easily that $n(\la,\mu)=2+n(\la,\mu-\alpha_{j_1})-2(\mu,\alpha_{j_1})$, proving that $\st_{w_0}^2v=(-1)^{\langle \lambda ,\rho^\vee \rangle}q^{n(\la,\mu)}v$.  Since this holds for any vector of the form $E_{j_1}\cdots  E_{j_\ell}v_\la^{low}$ in $L(\la)_\mu$, this completes the inductive step.
\end{proof}

\begin{Theorem}\label{thm:cactus}
The assignment $c_J \mapsto \varphi_J$ defines an action of $C$ on $\Irr(\cC)$.
\end{Theorem}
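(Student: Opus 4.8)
The plan is to verify that the bijections $\varphi_J$ satisfy the three defining relations of $C$; since $C$ has exactly that presentation, this suffices. Two facts will be used throughout. First, for $i\in J$ the Rickard complex $\Theta_i$ is literally the same functor whether $\cC$ is viewed over $U_q(\fg)$ or over $U_q(\fg_J)$, so for every subdiagram $J'$ the functor $\Theta_{w_0^{J'}}\bone_\mu$ is a perverse equivalence by Theorem~\ref{thm:perveq} applied to $\fg_{J'}$; moreover the same proof shows it is perverse with respect to \emph{any} filtration by $\fg_{J'}$-subrepresentations whose subquotients are $\fg_{J'}$-isotypic. Second, $\varphi$ does not depend on the chosen perversity triple and is multiplicative for composable perverse equivalences with matching middle filtration (Lemma~\ref{lem:LosLem}, its corollary, and Lemma~\ref{lem:pervcomp}; all applicable since each block of $\cC$ is finite). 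The general strategy for relations (ii) and (iii) is: translate the cactus relation into a braid-group identity, promote it to an isomorphism of compositions of Rickard complexes via Theorem~\ref{thm:CK}, and then cut along a Jordan--H\"older filtration on which the Rickard complex built from the \emph{largest} Levi in sight becomes t-exact up to a constant shift (Corollary~\ref{cor:mainthm}), so that the composition arguments apply subquotient by subquotient.

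Relation (ii) ($J,K$ disjoint and non-adjacent): the generators $\theta_j$ $(j\in J)$ commute with the $\theta_k$ $(k\in K)$ in $B$, so $\Theta_{w_0^J}\Theta_{w_0^K}\bone_\mu\cong\Theta_{w_0^K}\Theta_{w_0^J}\bone_\mu$ by Theorem~\ref{thm:CK}. Since $\fg_{J\cup K}=\fg_J\times\fg_K$, every subquotient of the $\fg_{J\cup K}$-Jordan--H\"older filtration of $\cC$ is both $\fg_J$-isotypic and $\fg_K$-isotypic, so on each of them \emph{both} $\Theta_{w_0^J}$ and $\Theta_{w_0^K}$ are t-exact up to a constant shift (Corollary~\ref{cor:mainthm}) and commute; hence on each subquotient the induced bijections commute, and since (by triple-independence and Lemma~\ref{lem:quot}) $\varphi_J$ and $\varphi_K$ decompose over these subquotients, this gives $\varphi_J\varphi_K=\varphi_K\varphi_J$ on all of $\Irr(\cC)$.

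Relation (iii) ($J\subseteq K$): the needed braid identity is $\theta_{w_0^J}\theta_{w_0^K}=\theta_{w_0^K}\theta_{w_0^{\tau_K(J)}}$ in $B_K$, which follows from the fact that conjugation by the half-twist $\theta_{w_0^K}$, and also by $\theta_{w_0^K}^{-1}$ (since $\tau_K^2=\Id$), sends $\theta_i\mapsto\theta_{\tau_K(i)}$ for $i\in K$: from $w_0^Ks_i=s_{\tau_K(i)}w_0^K$ together with $\ell(w_0^K)=\ell(w_0^Ks_i)+1=\ell(s_{\tau_K(i)}w_0^K)+1$ one gets $\theta_{w_0^K}=\theta_v\theta_i=\theta_{\tau_K(i)}\theta_v$ with $v:=w_0^Ks_i=s_{\tau_K(i)}w_0^K$, hence $\theta_{w_0^K}\theta_i\theta_{w_0^K}^{-1}=\theta_v\theta_i\theta_v^{-1}=\theta_{\tau_K(i)}$; applying this along a reduced word for $w_0^J$ and using $\tau_K(w_0^J)=w_0^{\tau_K(J)}$ yields the identity. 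By Theorem~\ref{thm:CK}, $\sF:=\Theta_{w_0^J}\Theta_{w_0^K}\bone_\mu\cong\Theta_{w_0^K}\Theta_{w_0^{\tau_K(J)}}\bone_\mu$. Since $J,\tau_K(J)\subseteq K$, all three of $\Theta_{w_0^J},\Theta_{w_0^K},\Theta_{w_0^{\tau_K(J)}}$ are compatible with the $\fg_K$-Jordan--H\"older filtration of $\cC$, and (by triple-independence and Lemma~\ref{lem:quot}) $\varphi_J,\varphi_K,\varphi_{\tau_K(J)}$ decompose over its subquotients. On each subquotient $\cM_i$ --- a simple, hence $\fg_K$-isotypic, categorification --- $\Theta_{w_0^K}\bone_\mu$ is a t-exact equivalence up to a constant shift (Corollary~\ref{cor:mainthm}), while $\Theta_{w_0^J}\bone$ and $\Theta_{w_0^{\tau_K(J)}}\bone$ are perverse equivalences; so $\sF|_{\cM_i}$, being realized both as (perverse)$\,\circ\,$(t-exact up to shift) and as (t-exact up to shift)$\,\circ\,$(perverse), is perverse with $\varphi_{\sF|_{\cM_i}}=\varphi_J|_{\cM_i}\circ\varphi_K|_{\cM_i}=\varphi_K|_{\cM_i}\circ\varphi_{\tau_K(J)}|_{\cM_i}$ (Lemma~\ref{lem:pervcomp}, after transporting the perversity filtration through the t-exact part, cf.\ Lemma~\ref{lem:texactperv}). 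Summing over the subquotients, and using $w_0^Kw_0^{\tau_K(J)}=w_0^Jw_0^K$ to identify the weight spaces, gives $\varphi_J\varphi_K=\varphi_K\varphi_{\tau_K(J)}$.

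The main obstacle is relation (i), $\varphi_J^2=\Id$. Here source and target are both $\cC_\mu$ and the relevant filtration on each is the $\fg_J$-Jordan--H\"older filtration, so Lemma~\ref{lem:pervcomp} applies directly and $\varphi_J^2=\varphi_{\Theta_{w_0^J}^2\bone_\mu}$. Decomposing over the $\fg_J$-Jordan--H\"older subquotients, on each simple $\fg_J$-categorification $\cM_i=\cL(\nu_i)\otimes_\kk\cB_i$ the functor $\Theta_{w_0^J}^2\bone_\mu$ is t-exact up to a constant shift (Corollary~\ref{cor:mainthm} twice), hence restricts to an abelian autoequivalence of the heart sending each simple to \emph{some} simple up to grading shift; but on the Grothendieck group of $\cM_i$ its class is $\st^2_{w_0^J}\sone_\mu$, which by Lemma~\ref{lem:fulltwist} (applied to $U_q(\fg_J)$ and $L(\nu_i)$) equals the scalar $(-1)^{\bullet}q^{\bullet}$, and comparing these two descriptions in the free module on the classes of the simples forces the autoequivalence to fix every simple, i.e.\ to induce the identity on $\Irr(\cM_i)$. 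Propagating up the filtration (Lemma~\ref{lem:quot}) gives $\varphi_{\Theta_{w_0^J}^2\bone_\mu}=\Id$. I expect this to be the delicate step --- and the reason Lemma~\ref{lem:fulltwist} is indispensable --- since t-exactness up to shift alone does not determine the induced bijection; it is the full-twist computation that forces it to be trivial.
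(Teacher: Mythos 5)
Your proof is correct and follows essentially the same route as the paper: for each cactus relation, lift the braid-group identity to an isomorphism of Rickard complexes (Theorem~\ref{thm:CK}), reduce via the Jordan--H\"older filtration (Theorem~\ref{thm:JH}) and Lemma~\ref{lem:quot} to the simple case, use Corollary~\ref{cor:mainthm} to make the relevant $\Theta_{w_0^K}$ t-exact up to shift there, assemble with Lemmas~\ref{lem:pervcomp}/\ref{lem:texactperv}, and for relation~(i) invoke Lemma~\ref{lem:fulltwist} to identify the full twist. The only departures are cosmetic: you spell out the conjugation identity $\theta_{w_0^K}\theta_i\theta_{w_0^K}^{-1}=\theta_{\tau_K(i)}$ which the paper quotes as known, you phrase (iii) as $\Theta_{w_0^J}\Theta_{w_0^K}\cong\Theta_{w_0^K}\Theta_{w_0^{\tau_K(J)}}$ where the paper writes the equivalent conjugated form with $K=I$, and your treatment of (i) is somewhat more explicit about passing to Jordan--H\"older subquotients, where the paper's invocation of a single $\lambda$ and the shift functor $[\langle 2\lambda,\rho^\vee\rangle]\langle n(\lambda,\mu)\rangle$ implicitly presumes that reduction.
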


\begin{proof} We need to show that the bijections $\varphi_J$ satisfy the cactus group relations. 
%
%
%
%

\textit{Relation (i)}: Without loss of generality we may assume $J=I$.   Fix a weight $\mu$.  Our aim is to show that 
\begin{equation}
\label{eq:rel1}
\varphi_I \varphi_I1_\mu=\Id_{\Irr(\cC_\mu)}.
\end{equation}

Since the filtration of $\cC_{w_0(\mu)}$ which we use in the perversity data of $\Theta_{w_0}\bone_\mu$, agrees with the filtration of $\cC_{w_0(\mu)}$ which we use in the perversity data of $\Theta_{w_0}\bone_{w_0(\mu)}$, by Lemma \ref{lem:pervcomp} the composition $\Theta_{w_0} \Theta_{w_0}\bone_\mu$ is a perverse autoequivalence of $D^b(\cC_\mu)$. 

The  functor $[\langle 2\lambda ,\rho^\vee \rangle]\langle n(\lambda,\mu)\rangle$ is also a perverse autoequivalence of $D^b(\cC_\mu)$.   By Lemma \ref{lem:fulltwist} these two perverse equivalences induce the same map on Grothendieck groups, and hence by Lemma \ref{lem:LosLem} they also induce the same bijection.  Since the bijection induced by $[\langle 2\lambda ,\rho^\vee \rangle]\langle n(\lambda,\mu)\rangle$ is the identity, this proves  relation (i).



\textit{Relation (ii)}:  Let $J,K \subset I$ be disjoint subdiagrams with no connecting edges.  Our aim is to show that
\begin{align}\label{eq:rel2}
\varphi_J\varphi_K1_\mu = \varphi_K \varphi_J1_\mu.
\end{align}
We prove a slightly more general statement, namely that for any categorical representation $\cC$ of $U_q(\fg_J \times\fg_K)$, relation (\ref{eq:rel2}) holds.

Note that $\Theta_J\Theta_K\bone_\mu\cong \Theta_K\Theta_J\bone_\mu$ are isomorphic perverse equivalences, so they induce the same bijections.  It remains to show that
\begin{align}\label{eq:rel2again}
\varphi_{\Theta_J\Theta_K\bone_\mu} =\varphi_J\circ\varphi_K 1_\mu.
\end{align}

Consider first the case when $\cC$ categorifies an simple representation of $U_q(\fg_J\times\fg_K)$.  A minimal categorification of $U_q(\fg_J\times\fg_K)$ is of the form $\cL(\lambda)\otimes_\kk\cL(\mu)$, where $\lambda$ is a highest weight for $\fg_J$ and $\mu$ is a highest weight for $\fg_K$.  Hence by Lemma \ref{lem:isocat}, a simple categorification of $U_q(\fg_J\times\fg_K)$ is of the form $\cL(\lambda)\otimes_\kk\cL(\mu)\otimes_\kk\cA$ for some abelian category $\cA$.  

This implies that as a categorical representation of $U_q(\fg_J)$ (respectively $U_q(\fg_K)$), $\cC$  categorifies an isotypic representation.  
By Corollary \ref{cor:mainthm} $\Theta_J\bone_{w_0^K(\mu)}$ and $\Theta_K\bone_\mu$ are t-exact up shift on isotypics categorifications.  Hence Equation (\ref{eq:rel2again}) follows by Lemma \ref{lem:pervcomp}.

Now consider a Jordan-H\"older filtration (Theorem \ref{thm:JH}):
\begin{align*}
0=\cC_0\subset \cdots\subset \cC_n=\cC,
\end{align*}
where for every $i$, $\cC_i$ is a  subrepresentation of $\cC$, and $\cC_i/\cC_{i-1}$ is a simple categorification of $U_q(\fg_J\times\fg_K)$.  Equation (\ref{eq:rel2again}) now follows by an easy induction on $i$.
Indeed the base case when $i=1$ holds by the paragraph above, and the inductive step by Lemma \ref{lem:quot}.


\textit{Relation (iii)}: We need to show that $\varphi_J\varphi_K1_\mu=\varphi_K\varphi_{\tau_K(J)}1_\mu$, where $J\subset K$.  Again, we may assume that $K=I$.  Note that we have an isomorphism at the level of functors:
\begin{align*}
\Theta_{w_0}^{-1}\Theta_{w_0^J}\Theta_{w_0}\bone_\mu \cong \Theta_{w_0^{\tau_K(J)}}\bone_\mu,
\end{align*}
which lifts the corresponding relation in $B$.  Since this is an isomorphism of perverse equivalences, they must induce the same bijections by Lemma \ref{lem:LosLem}.  

It remains to show that 
\begin{align}\label{eq:compo}
\varphi_{\Theta_{w_0}^{-1}\Theta_{w_0^J}\Theta_{w_0}\bone_\mu} =\varphi_I^{-1}\circ \varphi_J\circ \varphi_I.
\end{align}
When $\cC$ is a simple categorification, by Corollary \ref{cor:mainthm} $\Theta_{w_0}\bone_\mu$ is t-exact (up to shift).  Hence Equation (\ref{eq:compo}) follows by Lemma \ref{lem:texactperv}.  Now apply the same reasoning as in the proof of Relation (ii) to deduce equation (\ref{eq:compo}) in the general case.

%

\end{proof}

\subsection{Reconciling the two cactus group actions}

Let $\cC$ be a categorical representation of $U_q(\fg)$, and consider the $\fg$-crystal $\bB=\Irr(\cC)$.   There are two actions of the cactus group on $\bB$, the first arising combinatorially via  Sch\"utzenberger involutions (Theorem \ref{thm:cactact}) and the other categorically via Theorem \ref{thm:cactus}. 

\begin{Theorem}\label{thm:agree}
The two actions of the cactus group on $\bB$  agree.
\end{Theorem}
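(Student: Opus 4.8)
The plan is to prove the equivalent statement that $\varphi_J=\xi_J$ for every connected subdiagram $J\subseteq I$, where $\varphi_J$ is the bijection of $\Irr(\cC)$ produced in Theorem \ref{thm:cactus} and $\xi_J$ is the generalised Sch\"utzenberger involution of Theorem \ref{thm:cactact}; since both assignments $c_J\mapsto\varphi_J$ and $c_J\mapsto\xi_J$ are already known to define $C$-actions, checking them on the generators suffices. Restricting $\cC$ to $U_q(\fg_J)$ identifies $\varphi_J$ with the ``$\varphi_I$'' attached to $\cC$ viewed as a $U_q(\fg_J)$-representation, and $\xi_J$ with the Sch\"utzenberger involution $\xi$ of the $\fg_J$-crystal $\bB_J$, so it is enough to treat the case $J=I$: we must show $\varphi_I=\xi$ on $\bB=\Irr(\cC)$.

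Recall that $\xi$ acts on each connected component $\bB(\lambda)$ of the normal crystal $\bB$ as $\xi_\lambda$, and that $\xi_\lambda$ is the unique self-map of the \emph{connected} crystal $\bB(\lambda)$ satisfying $\wt\circ\xi_\lambda=w_0\circ\wt$, $\xi_\lambda\widetilde{\sE}_i=\widetilde{\sF}_{\tau(i)}\xi_\lambda$ and $\xi_\lambda\widetilde{\sF}_i=\widetilde{\sE}_{\tau(i)}\xi_\lambda$ (uniqueness because such a map is forced to send the lowest weight vertex $v_\lambda^{low}$ to $v_\lambda$, and $\bB(\lambda)$ is generated from $v_\lambda^{low}$ by the $\widetilde{\sE}_i$). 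So I would verify that $\varphi_I$ has these three intertwining properties. Fix a Jordan--H\"older filtration $0=\cC_0\subset\cdots\subset\cC_r=\cC$ as in Theorem \ref{thm:JH}. Each $\cC_i$ is a subrepresentation, so $\Irr(\cC_i)$ is a subcrystal of $\bB$, hence a union of connected components; consequently $\bB=\bigsqcup_i\Irr(\cC_i/\cC_{i-1})$ as crystals, with $\Irr(\cC_i/\cC_{i-1})\cong\bB(\lambda_i)\times\Irr(\cA_i)$ a disjoint union of copies of $\bB(\lambda_i)$ on each of which $\xi$ acts as $\xi_{\lambda_i}$. The perverse equivalence $\Theta_{w_0}\bone_\mu$ preserves this filtration, so $\varphi_I$ preserves each $\Irr(\cC_i/\cC_{i-1})$, and the induced bijection there is the one coming from the genuinely t-exact equivalence $\Theta_{w_0}\bone_\mu[-p(i)]$ on the subquotient; by the proof of Corollary \ref{cor:mainthm} this functor is $\bigl(\Theta_{w_0}\bone_\mu[-p(i)]\big|_{\cL(\lambda_i)}\bigr)\otimes_\kk\bone_{\cA_i}$, so $\varphi_I$ restricts to $\varphi_I^{\cL(\lambda_i)}\times\mathrm{id}$ on $\bB(\lambda_i)\times\Irr(\cA_i)$, in particular preserving each copy of $\bB(\lambda_i)$. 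It therefore suffices to prove that on the minimal categorification $\cL(\lambda)$ the bijection $\varphi_I$ satisfies the three intertwining properties.

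This last point is where Theorem A and the commutation relations enter. Fix a weight $\mu$, put $n=ht(\mu-w_0(\lambda))$, and let $\Psi=\Theta_{w_0}\bone_\mu[-n]$ and $\Psi'=\Theta_{w_0}\bone_{\mu+\alpha_i}[-(n+1)]$; by Theorem \ref{thm:mincat} both are t-exact equivalences of derived categories, hence restrict to equivalences of abelian categories $\cL(\lambda)_\mu\to\cL(\lambda)_{w_0(\mu)}$ and $\cL(\lambda)_{\mu+\alpha_i}\to\cL(\lambda)_{w_0(\mu)-\alpha_{\tau(i)}}$ inducing $\varphi_I$ on $\Irr$. Shifting Proposition \ref{prop:mainrel} gives isomorphisms of \emph{exact} functors $\Psi'\circ\sE_i\cong\sF_{\tau(i)}\circ\Psi\langle\mu_i\rangle$ and $\Psi'\circ\sF_i\cong\sE_{\tau(i)}\circ\Psi\langle-\mu_i\rangle$ (with the analogous weight data). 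Since equivalences of abelian categories commute with taking socles, applying $\operatorname{soc}$ to these isomorphisms evaluated at a simple object $X$ yields $\varphi_I(\widetilde{\sE}_iX)=\widetilde{\sF}_{\tau(i)}(\varphi_IX)$ and $\varphi_I(\widetilde{\sF}_iX)=\widetilde{\sE}_{\tau(i)}(\varphi_IX)$ in $\Irr(\cL(\lambda))$, the grading shifts being irrelevant since $\Irr$ is taken up to shift; the weight condition $\wt\circ\varphi_I=w_0\circ\wt$ is clear, and Proposition \ref{prop:lowestwt} records $\varphi_I(v_\lambda^{low})=v_\lambda$ directly. By the uniqueness of $\xi_\lambda$ above, $\varphi_I^{\cL(\lambda)}=\xi_\lambda$; gluing over the Jordan--H\"older pieces gives $\varphi_I=\xi$, and hence $\varphi_J=\xi_J$ for all $J$.

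I expect the main obstacle to be organisational rather than deep: one must verify that on each Jordan--H\"older subquotient the a priori abstract bijection $\varphi_I$ really is computed by an honestly t-exact functor, so that ``commutes with socles'' may be invoked, and one must track the homological and grading shifts in Proposition \ref{prop:mainrel} carefully enough that it descends to a clean isomorphism of exact functors between the relevant abelian weight categories. Everything else — the reduction to $J=I$, the splitting of $\bB$ along the Jordan--H\"older filtration using that the $\cC_i$ are subrepresentations, and the uniqueness of the Sch\"utzenberger involution on a connected crystal — is routine.
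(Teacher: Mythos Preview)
Your proposal is correct and follows essentially the same route as the paper: reduce to $J=I$ by restriction, reduce to the simple/minimal categorification via the Jordan--H\"older filtration and Lemma \ref{lem:quot}, and on $\cL(\lambda)$ combine the t-exactness of $\Theta_{w_0}\bone_\mu[-n]$ (Theorem \ref{thm:mincat}) with the commutation relation of Proposition \ref{prop:mainrel} and the fact that abelian equivalences preserve socles to verify the defining properties of $\xi_\lambda$. The only cosmetic differences are that the paper treats the simple case before the JH reduction and checks only the $\widetilde{\sE}_i$-intertwining (which suffices), and that your use of $\Psi'$ for the $\sF_i$-relation has the wrong weight/shift index (one needs $\Theta_{w_0}\bone_{\mu-\alpha_i}[-(n-1)]$ there), though this does not affect the argument.
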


\begin{proof}
It suffices to show that $\varphi_I=\xi_I$.  
First, suppose $\cC$ is a simple categorification of type $\lambda \in X_+$.  In this case $\bB=\bB(\lambda)$, and $\xi=\xi_I$ is determined by:
\begin{align*}
\xi(v_\lambda)&=v_\lambda^{low}, \text{ and } \\
\xi(\tilde{e}_i(v))&=\tilde{f}_{\tau(i)}(\xi(v)) \text{ for all } v \in \bB,
\end{align*}
so we need to show that $\varphi_I$ satisfies these properties as well.  

The first is an immediate consequence of Corollary \ref{cor:mainthm}.  To show that $\varphi_I$ satisfies the second property, fix $\mu \in X$ and $ i \in I$.  We set $n=ht(\mu-w_0(\lambda)), j=\tau(i)$, and write $\Theta=\Theta_{w_0}$.  

Consider the following diagram:
\begin{equation}
\begin{tikzcd}
D^b(\cC_\mu) \arrow[rrr,"\Theta\bone_\mu\sh{-n}\langle \mu_i \rangle"] \arrow[d,"\sE_i\bone_\mu"] &&&  D^b(\cC_{w_0(\mu)}) \arrow[d,"\sF_j\bone_{w_0(\mu)}"] \\
D^b(\cC_{\mu+\alpha_i}) \arrow[rrr,"\Theta\bone_{\mu+\alpha_i}\sh{-n-1}"]  &&&  D^b(\cC_{w_0(\mu)-\alpha_j})
\end{tikzcd}
\end{equation}
By  Proposition \ref{prop:mainrel} this diagram commutes (note that we shifted both sides of the equation by $-n-1$).  By Theorem \ref{thm:mincat} both horizontal arrows are in fact t-exact equivalences so this restricts to a diagram of abelian categories:
\begin{equation}
\begin{tikzcd}
\cC_\mu \arrow[rrr,"\Theta\bone_\mu\sh{-n}\langle \mu_i \rangle"] \arrow[d,"\sE_i\bone_\mu"] &&&  \cC_{w_0(\mu)} \arrow[d,"\sF_j\bone_{w_0(\mu)}"] \\
\cC_{\mu+\alpha_i} \arrow[rrr,"\Theta\bone_{\mu+\alpha_i}\sh{-n-1}"]  &&&  \cC_{w_0(\mu)-\alpha_j}
\end{tikzcd}
\end{equation}

Let $L\in \cC_\mu$ be a simple object,  and let $L'=\Theta\bone_\mu(L)\sh{-n}\langle \mu_i \rangle$.  Note that $L'\in \cC_{w_0(\mu)}$ is simple and $\varphi_I(L)=L'$.  By the above diagram we have an isomorphism
\begin{align*}
\Theta\bone_{\mu+\alpha_i}(\sE_i(L))\sh{-n-1} \cong \sF_j(L').
\end{align*}
Now, $\widetilde{\sF}_j(L') \subset \sF_j(L')$ is the unique simple subobject.  On the other hand, since $\Theta\bone_{\mu+\alpha_i}\sh{-n-1}$ is an abelian equivalence, $\Theta\bone_{\mu+\alpha_i}(\widetilde{\sE}_i(L))\sh{-n-1}\subset \Theta\bone_{\mu+\alpha_i}(\sE_i(L))\sh{-n-1}$ is a simple subobject.  Therefore
\begin{align*}
\Theta\bone_{\mu+\alpha_i}(\widetilde{\sE}_i(L))\sh{-n-1} \cong \widetilde{\sF}_j(L').
\end{align*}
Since the equivalence class of the left hand side is $\varphi_I\circ \tilde{e}_i(L)$, this shows that $\varphi_I$ satisfies the second defining property, and hence the two cactus group actions agree in the case of a simple categorification. 

The general case when $\cC$ is not necessarily a simple categorification follows easily using the Jordan-H\"older filtration (Theorem \ref{thm:JH}) and Lemma \ref{lem:quot}.
\end{proof}

\section{Examples and Applications}
\subsection{Examples}
We now examine three examples.  The first two consider minimal categorifications of the adjoint representation, while the third studies the categorification of the $n$-fold tensor product of the standard representation of $\fsl_n$.  For ease of presentation, we ignore gradings and consider non-quantum categorical representations.

\begin{example}
Let's consider the first non-trivial example of Theorem \ref{thm:mincat}: the minimal categorification of the adjoint representation of $\fsl_2$.  We can model this as follows:
$$
\begin{tikzcd}
\kk\mmod \arrow[rr, bend left, "ind"] &  & R\mmod \arrow[rr, bend left,"res"] \arrow[ll, bend left, "res"] &  & \kk\mmod \arrow[ll, bend left, "ind"]
\end{tikzcd}
$$
where $R=\kk[x]/(x^2)$ \cite[Example 5.17]{CR}.  Here $\kk\mmod$ is the $\pm2$ weight category, and $R\mmod$ is the zero weight category.  The arrows describe the $\sE,\sF$ functors (we omit the higher structure). 

Consider the Rickard complex $\Theta=\Theta\bone_0:D^b(R\mmod) \to D^b(R\mmod)$.  For $M\in R\mmod$, we have:
\begin{align*}
\Theta(M)=R\otimes_\kk M \to M,
\end{align*}
where the differential is given by the action map, and $M$ is in  cohomological degree $0$.  It's an exercise to verify that $\Theta(M)$ is quasi-isomorphic to $M'[1]$, where $M'$ is the twist of $M$ by the automorphism of $R$ given by $a+bx \mapsto a-bx$.  This shows that $\Theta[-1]$ is the t-exact equivalence $M\mapsto M'$.
\end{example}

\begin{example}
More generally, one can consider the minimal categorification of the adjoint representation of a simple simply-laced Lie algebra $\fg$.  This was studied by Khovanov and Huerfano in \cite{HuerKh}, who used zigzag algebras to model this category.  

For a weight $\alpha$ of the adjoint representation of $\fg$, the weight  category $\cC_\alpha$ is taken to be $\kk\mmod$ as long as $\alpha\neq 0$.  However, the zero weight  category is more interesting:  $\cC_0 := A \mmod$, where $A$ is the zigzag algebra associated to the Dynkin diagram $I$ of $\fg$ (cf. \cite{HuerKh} for the precise definition).  

The isomorphism classes of indecomposable projective left $A$-modules $\{P_i\}$ and the isomorphism classes of indecomposable projective right $A$-modules $\{Q_{i}\}$ are both indexed by $i \in I$.  Tensoring with these modules defines functors 
\begin{align*}
	&\sE_i: \cC_0 \longrightarrow \cC_{\alpha_i}, \quad M \mapsto Q_{i} \otimes_{A} M, \\
	&\sE_i : \cC_{-\alpha_i} \longrightarrow \cC_0, \quad  V \mapsto  P_i\otimes_\kk V.
\end{align*}
The functors $\sF_i$ are defined analogously and are biadjoint to the $\sE_i$.

The Rickard complex
$\Theta_i \bone_0$ is given by tensoring with the complex
$$
	P_i\otimes_\kk Q_i \rightarrow A
$$
of $(A,A)$-bimodules, where the differential is given by the multiplication in $A$, and $A$ sits in cohomological degree zero.  These functors are autoequivalences of the derived category $D^b(\cC_0)$.  

By Theorem \ref{thm:mincat}  $\Theta_{w_0}\bone_0[-n]$ is t-exact, where $n+1$ is the Coxeter number of $\fg$.  
This auto-equivalence can be explicitly described as follows: the automorphism of the Dynkin diagram $\tau:I\to I$ induces an automorphism $\psi$ of $A$.  Then we claim that $\Theta_{w_0}\bone_0[-n]$ is the abelian autoequivalence of $A\mmod$ defined by twisting with $\psi$.  Indeed, since $\Theta_{w_0}\bone_0[-n]$ is an abelian autoequivalence, it is determined up to isomorphism by its action on simple objects.  Moreover, the action on simple objects can be read off from the action of $W$ on the Grothendieck group of $A \mmod$ as follows: $[A \mmod]_\ZZ$ is isomorphic to the Cartain subalgebra by mapping $[L_i]$ ($L_i$ is the simple head of $P_i$) to the simple root vector $H_i$.  And on the root vectors we have that $w_0\cdot H_i = -H_{\tau(i)}$ 
\end{example}

\begin{example}\label{ex:catO}
Let $\fg=\fsl_n$ and consider the $n$-fold tensor power of the standard representation $V^{\otimes n}$.  Categorifications of $V^{\otimes n}$ have been well-studied, and a model $\cC$ for this categorical representation can be constructed using the BGG category $\cO$ of $\fg$ \cite{Sussan2007, MS09}.  In this model, the principal block $\cO_0 \subset \cO$ appears as the zero weight category of $\cC$, and the Rickard complexes acting on $D^b(\cO_0)$ are the well-known shuffling functors.

By Theorem \ref{thm:perveq} $\Theta_{w_0}\bone_0:D^b(\cO_0)\to D^b(\cO_0)$ is a perverse equivalence with respect to an isotypic filtration.  In fact, this recovers the type A case of a theorem of the third named author \cite{LosCacti}, using completely different methods (in \cite{LosCacti} the perversity of $\Theta_{w_0}$ is proved using the theory of W-algebras).  We can interpret the filtration of $\cO_0$ arising from our perspective concretely using the Robinson-Schensted correspondence .  

Recall that the simple objects in $\cO_0$ are the irreducible highest weight representations $L(w), w \in S_n$, where $L(w)$ has highest weight $w\rho-\rho$ ($\rho$ is the half-sum of positive roots of $\fsl_n$). 

We view a partition $\lambda$ of $n$  simultaneously as a dominant integral weight for $\fsl_n$, and as an index for the irreducible Specht module $S^\lambda$ of the symmetric group $S_n$. Let $\SYT(\lambda)$ denote the set of standard Young tableau of shape $\lambda$, and let $d(\lambda)=|\SYT(\lambda)|$.  Recall that $d(\lambda)=\dim S^\lambda$.  

Choose an ordering of the partitions of $n$, $\lambda_1,\hdots,\lambda_r$, so that if $\lambda_i \prec \lambda_j$ in the dominance order, then $i<j$.  Note that the dominance order on partitions of $n$ is equivalent to the positive root ordering on partitions (thought of as weights for $\fsl_n$).
By Remark \ref{rem:isofilts} there is an isotypic filtration on $\cC$,
$
0 \subset \cC_1 \subset \cdots\subset \cC_N=\cC,
$
where  $\cC_i/\cC_{i-1}$ is an isotypic categorification of type $\lambda_i$.  Then  $\Theta_{w_0}$ is a perverse equivalence with respect to the filtration $\cO_{0,i} = \cO_0\cap \cC_i$ and the perversity function $p(i)=ht(\lambda_i)$.

We would like now to define the categories $\cO_{0,i}$ more explicitly using the Robinson-Schensted correspondence, which recall is a bijection \cite{Sagan}:
\begin{align*}
\mathsf{RS}:S_n \longrightarrow \bigsqcup_{j=1}^r\SYT(\lambda_j)\times\SYT(\lambda_j), \; w \mapsto (P(w),Q(w)).
\end{align*}
We will use a crystal analogue of classical Schur-Weyl duality.  This is given by a crystal isomorphism: 
\begin{align}\label{eq:crystalSW}
\bB(\varpi_1)^{\otimes n} \longrightarrow \bigsqcup_{j=1}^r \bB(\lambda_j)\times\SYT(\lambda_j).
\end{align}

Now, recall that for a partition $\la$ of $n$, the underlying set of the crystal $\bB(\la)$ can be chosen to be the set of semistandard Young tableaux of shape $\la$ with entries $1,\hdots,n$, and the weight zero subset of $\bB(\la)$ is precisely $\SYT(\la)$.  
The essential point is that the isomorphism \eqref{eq:crystalSW} can be chosen so that it restricts to the map $\mathsf{RS}$ on the elements of weight zero (\cite[Theorem 3.5]{Shimozono}). (Note that  the elements of weight zero in $\bB(\varpi_1)^{\otimes n}$ are naturally identified with permutations of $1,\hdots,n$.)

This shows that as an element of the crystal $\bB(\varpi_1)^{\otimes n}$, $[L(w)]$ is in a connected component whose highest weight is the shape of $Q(w)$ (or equivalently $P(w)$).   
Therefore, following Remark \ref{rem:JHfromCrystal}, we can construct the isotypic filtration be defining  $\cO_{0,i}$ to be  the Serre subcategory of $\cO_0$ generated by $L(w)$ such that the shape of $Q(w)$ is among $\la_1,\hdots,\la_i$.

\end{example}

\subsection{Type A combinatorics}\label{sec:typeA}

In this final section, we specialise to type A and discuss  the combinatorics of Kazhdan-Lusztig bases and standard Young tableaux from the vantage of perverse equivalences.   

Set  $I=A_{n-1}=\{1,\hdots,n-1\}$.
We continue with the notation in Example \ref{ex:catO} and view a partition $\lambda \vdash n$ simultaneously as a dominant integral highest weight for $\fsl_n$, and as an index of the Specht module $S^\lambda$. Recall that by Schur-Weyl duality, $L(\lambda)_0$ is isomorphic to $S^\lambda$.  The Kazhdan-Lusztig basis of the Hecke algebra naturally descends to a basis of $S^\la$, which we denote $\{C_T\}$ indexed by   $T\in\SYT(\lambda)$.  For further details we recommend the exposition in \cite{Rhoad}.  

Consider the  minimal categorification $\cL(\lambda)$, and in particular its zero weight category $\cL(\lambda)_0$.  For convenience, we forget the grading and work in the non-quantum setting.  The simple objects $L(T)  \in \cL(\lambda)_0$ are indexed by $T\in \SYT(\lambda)$, and hence a perverse equivalence $\sF:D^b( \cL(\lambda)_0) \to D^b( \cL(\lambda)_0)$ induces a bijection $\varphi_\sF:\SYT(\lambda) \to \SYT(\lambda)$.  

The bijection $\varphi_I$ studied in the previous section specialises to the well-known Sch\"utzenberger involution on standard Young tableau, otherwise known as the ``evacuation operator'' $e$ \cite{Sagan}.  Indeed, by Theorem \ref{thm:agree} $\varphi_I$ recovers the cactus group action on the crystal $\Irr(\cC)$ by generalised Sch\"utzenberger involutions.  The Sch\"utzenberger involution is well-known to agree with the evacuation operator \cite[Theorem A1.2.10]{StanleyII}).  Note that this is elementary: it follows directly from the fact that the evacuation operator satisfies the properties of Definition \ref{def:genSchutz} on standard Young tableaux.

The promotion operator $j:\SYT(\lambda)\to \SYT(\lambda)$ is another important function in algebraic combinatorics, which is closely connected to the RSK correspondence and related ideas such as jeu de taquin.  
Letting $J=\{1,\hdots,n-2\}\subset I$, we can express promotion in terms of the Sch\"utzenberger involution: $j=\varphi_I\varphi_{J}$.
We refer the reader to \cite{Sagan} for a detailed exposition.  We can now see easily that promotion also arises from a perverse equivalence:

\begin{Proposition}\label{prop:promo}
Let $c_n=(1,2,\hdots,n)\in S_n$ be the long cycle.  Then $$\Theta_{c_n}:D^b( \cL(\lambda)_0) \to D^b( \cL(\lambda)_0)$$ is a perverse equivalence whose associated bijection is the promotion operator: $\varphi_{\Theta_{c_n}}=j$.
\end{Proposition}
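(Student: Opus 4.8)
The plan is to realize the long cycle $c_n = (1,2,\dots,n)$ as a product $c_n = w_0 w_0^J$ where $J = \{1,\dots,n-2\}$, so that the braid-group lift decomposes as $\Theta_{c_n} \cong \Theta_{w_0}^{-1}\Theta_{w_0^J}\Theta_{w_0}\bone_0$ up to shift — or more directly, to write $\Theta_{c_n}\bone_0 \cong \Theta_{w_0^J}\bone_{\,\cdot}\,\Theta_{w_0}\bone_0$ up to shift, matching the combinatorial identity $j = \varphi_I\varphi_J$. First I would verify the Weyl-group identity: in $S_n$ one has $w_0 w_0^J = c_n^{\pm 1}$ where $w_0^J$ is the longest element of $S_{n-1}$ acting on $\{1,\dots,n-1\}$ (this is the standard fact that promotion is the composite of two evacuations). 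Having fixed the correct orientation, I would observe that the positive lift $\theta_{c_n} \in B$ factors (up to the ambiguity that is irrelevant since both sides are perverse and we only track the induced bijection) compatibly with the factorization $c_n = w_0^J \cdot u$ for an appropriate $u$, giving an isomorphism of functors $\Theta_{c_n}\bone_0 \cong \Theta_{w_0^J}\,\Theta_{u}\bone_0$.

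Next, the key input is Theorem~\ref{thm:cactus} together with Lemma~\ref{lem:pervcomp}: each of $\Theta_{w_0}\bone_0$ (for the full diagram $I$) and $\Theta_{w_0^J}\bone_{\,\cdot}$ (for the subdiagram $J$, viewing $\cL(\lambda)$ as a $U_q(\fsl_{n-1})$-categorical representation by restriction) is a perverse equivalence on $\cL(\lambda)_0$, by Theorem~\ref{thm:perveq} applied to $\cC = \cL(\lambda)$. Their composite is therefore a perverse equivalence by Lemma~\ref{lem:pervcomp}, and the induced bijection on $\Irr(\cL(\lambda)_0) = \SYT(\lambda)$ is $\varphi_I \circ \varphi_J$ (or the relevant order — I would fix conventions so the composite matches $\varphi_I\varphi_J$). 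Since $\Theta_{c_n}\bone_0$ differs from this composite only by a shift, Lemma~\ref{lem:pervshift} (or directly Lemma~\ref{lem:LosLem}, comparing classes in the Grothendieck group, where $[\Theta_{c_n}\bone_0] = \st_{c_n}\sone_0$) shows $\Theta_{c_n}\bone_0$ is perverse with the same induced bijection $\varphi_I\varphi_J$. Finally, invoking the quoted combinatorial fact $j = \varphi_I\varphi_J$ from \cite{Sagan} (already recalled in the paragraph preceding the Proposition, where $\varphi_I$ is identified with evacuation $e$), we conclude that the bijection associated to $\Theta_{c_n}$ is promotion.

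The one subtlety — and the main thing to get right — is the bookkeeping of \emph{which} reduced word / which orientation of $c_n$ one uses, and correspondingly whether the factorization is $w_0 w_0^J$, $w_0^J w_0$, or a conjugate, and whether promotion is $\varphi_I\varphi_J$ or $\varphi_J\varphi_I$; the two differ by $\varphi_I$-conjugation and by passing to inverses. Since promotion and its inverse (and evacuation-conjugates thereof) are all genuinely different operators on $\SYT(\lambda)$, this is not a cosmetic point: the proof must pin down conventions so that the identity of the preceding paragraph, $j = \varphi_I\varphi_J$, is exactly the one that falls out. Once conventions are aligned, everything else is a formal consequence of the perverse-equivalence machinery (Lemmas~\ref{lem:pervcomp}, \ref{lem:pervshift}, \ref{lem:LosLem}) and Theorem~\ref{thm:perveq}; no new categorical work is needed. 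I expect the verification of the braid-lift factorization $\theta_{c_n} = \theta_{w_0^J}\theta_u$ in $B$ (as opposed to just in $W$) to require a short but careful length/reduced-word argument, and that is where I would spend the most care.
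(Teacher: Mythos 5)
Your plan identifies the right factorization at the Weyl group level ($c_n = w_0 w_0^J$), but the passage to the braid group goes wrong: the positive lift $\theta_{c_n}$ does \emph{not} factor as $\theta_{w_0^J}\theta_u$ (or $\theta_u\theta_{w_0^J}$) for any $u \in W$, because lengths fail to add -- we have $\ell(c_n) = n-1$ while $\ell(w_0^J) = \binom{n-1}{2}$, so for $n \geq 4$ no such positive factorization can exist. The relation that actually holds is $\theta_{w_0} = \theta_{c_n}\theta_{w_0^J}$ (here lengths \emph{do} add: $\binom{n}{2} = (n-1) + \binom{n-1}{2}$), giving $\theta_{c_n} = \theta_{w_0}\theta_{w_0^J}^{-1}$, i.e.\ $\Theta_{c_n} \cong \Theta_{w_0}\circ\Theta_{w_0^J}^{-1}$ -- a positive lift composed with the \emph{inverse} of a positive lift. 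This is not a cosmetic matter of orientation, and the hedge ``up to the ambiguity that is irrelevant since both sides are perverse'' does not save the argument, because you need a genuine isomorphism of functors (or at least a perversity statement) before Lemma~\ref{lem:LosLem} can be applied at all.

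Once the factorization $\Theta_{c_n} \cong \Theta_{w_0}\Theta_{w_0^J}^{-1}$ is in hand, the paper's argument is simpler than your proposed route via Lemma~\ref{lem:pervcomp}, and also uses an ingredient your proposal omits. Since $\cL(\lambda)$ is a \emph{simple} categorification, Theorem~\ref{thm:mincat} (not merely Theorem~\ref{thm:perveq}) applies to the full diagram $I$ and shows $\Theta_{w_0}\bone_0$ is t-exact up to shift. Then $\Theta_{w_0^J}^{-1}$ is perverse (inverse of the perverse equivalence from Theorem~\ref{thm:perveq}), and Lemma~\ref{lem:texactperv} -- which composes a perverse equivalence with a t-exact-up-to-shift autoequivalence and needs no filtration compatibility -- immediately gives perversity of $\Theta_{c_n}$ with induced bijection $\varphi_I\circ\varphi_{\Theta_{w_0^J}^{-1}} = \varphi_I\varphi_J^{-1} = \varphi_I\varphi_J$ (the last step using that $\varphi_J$ is an involution, e.g.\ by cactus relation (i) of Theorem~\ref{thm:cactus}). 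Your invocation of Lemma~\ref{lem:pervcomp} instead would require verifying that the two perversity filtrations match, an extra step that the t-exactness route sidesteps entirely.
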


\begin{proof}
Notice that $c_n=w_0w_0^J$ for $J$ as above. 
Now recall that $\Theta_{w_0}$ is (up to shift) a t-exact  autoequivalence of $D^b( \cL(\lambda)_0)$ (Theorem \ref{thm:mincat}).  Since $\Theta_{w_0^J}$ is a perverse equivalence (Theorem \ref{thm:perveq}),  its inverse is too.  Therefore $\Theta_{c_n}\cong\Theta_{w_0}\Theta_{w_0^J}^{-1}$ is also a perverse autoequivalence. 
By Lemma \ref{lem:texactperv} we have that $\varphi_{\Theta_{c_n}}=\varphi_I\varphi_J$, and hence we recover the promotion operator.
\end{proof}

We can also use this set-up to extract information about the action of $S_n$ on the Kazhdan-Lusztig basis of $S(\lambda)$.  This is based on the following elementary lemma:

\begin{Lemma}\label{lem:KLact}
Let $w \in S_n, \lambda \vdash n$ and suppose $\Theta_w :D^b(\cL(\lambda)_0) \to D^b(\cL(\lambda)_0)$ is t-exact up to shift. 
Then for any $T \in \SYT(\lambda)$, $w\cdot C_T=\pm C_{S}$, where $S=\varphi_{\Theta_w}(T)$.
\end{Lemma}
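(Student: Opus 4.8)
The plan is to reduce the statement to the action of an abelian autoequivalence on $\cL(\lambda)_0$ and then read off its effect on the Kazhdan--Lusztig basis at the level of Grothendieck groups. Since $\Theta_w\bone_0\colon D^b(\cL(\lambda)_0)\to D^b(\cL(\lambda)_0)$ is t-exact up to shift, there is an integer $k$ with $\Theta_w\bone_0[k]$ t-exact, and being a triangulated equivalence it then restricts to an exact autoequivalence $\sG$ of the abelian category $\cL(\lambda)_0$. As $\cL(\lambda)_0$ is a finite abelian category, equivalently $A\mmod$ for a finite-dimensional algebra $A$, the functor $\sG$ permutes the isomorphism classes of simple objects, and --- taking the trivial filtration as perversity datum --- this permutation of $\Irr(\cL(\lambda)_0)=\SYT(\lambda)$ is precisely $\varphi_{\Theta_w}$ (which, there being finitely many simples, is independent of the chosen perversity datum). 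Because an equivalence of abelian categories carries projective covers to projective covers, $\sG$ sends the indecomposable projective $P(T)$ to $P(S)$ and the simple $L(T)$ to $L(S)$, with $S=\varphi_{\Theta_w}(T)$.

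Next I would work in $[\cL(\lambda)_0]_\CC\cong L(\lambda)_0\cong S(\lambda)$. Under this identification the Kazhdan--Lusztig basis $\{C_T\}_{T\in\SYT(\lambda)}$ is, up to signs, the basis of classes $\{[P(T)]\}$ of indecomposable projectives: the Kazhdan--Lusztig basis of the Specht module is the zero weight part of Lusztig's canonical basis of $L(\lambda)$, which is what the indecomposable projectives of $\cL(\lambda)_0$ categorify (cf.\ \cite{Rhoad,Stem96} and the references therein). By the previous paragraph $[\sG]$ permutes this basis according to $\varphi_{\Theta_w}$, so $[\sG](C_T)=\pm C_S$. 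On the other hand $\Theta_w\bone_0$ categorifies Lusztig's braid group operator $\st_w$, so $[\sG]=(-1)^k[\Theta_w\bone_0]=(-1)^k\st_w$ as operators on $S(\lambda)$. Finally, on the zero weight space the operator $\st_w$ agrees with the Specht-module action of $w\in S_n$ up to an overall sign $\pm1$: there the reflection lifts act as involutions, hence generate an honest $S_n$-action isomorphic to $S(\lambda)$, and $\st_w$ differs from this action by a sign. Combining the two expressions for $[\sG]$ yields $w\cdot C_T=\pm C_S$, as claimed.

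The step requiring genuine input, rather than formal manipulation, is the identification in the second paragraph of the Kazhdan--Lusztig basis of $S(\lambda)$ with the classes of indecomposable projectives of $\cL(\lambda)_0$ --- equivalently, with the zero weight part of the canonical basis of $L(\lambda)$ --- together with the comparison of $\st_w$ with the honest $S_n$-action up to sign; both are standard and I would simply cite them. Restricting a t-exact-up-to-shift equivalence to the heart, the fact that such an equivalence permutes the simples and the indecomposable projectives by the same bijection $\varphi_{\Theta_w}$, and the Grothendieck-group bookkeeping are all routine.
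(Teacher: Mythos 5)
Your proof is essentially correct and follows the same route as the paper: t-exactness up to shift lets you restrict $\Theta_w$ to an abelian autoequivalence of $\cL(\lambda)_0$; this permutes the simples by $\varphi_{\Theta_w}$; and the decategorified statement follows from identifying $[\cL(\lambda)_0]_\CC$ with $S(\lambda)$ so that this permutation becomes the KL-basis permutation, together with the fact that the Lusztig braid action on the zero weight space factors through $S_n$ up to signs. The paper compresses all of this into two sentences (observing $[\Theta_w(L(T))]=\pm[L(S)]$, then citing $S_n$-equivariance of the isomorphism $L(T)\mapsto C_T$ and Baumann's result that the $B$-action on $L(\lambda)_0$ factors through $S_n$). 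Two small imprecisions in yours worth flagging: the paper identifies the \emph{simples} $L(T)$, not the indecomposable projectives, with the KL basis elements $C_T$ — though since an exact autoequivalence permutes simples and projectives by the same bijection, this convention slip does not damage the argument; and your justification that the reflection lifts act as involutions on the zero weight space is a little loose (they square to $\pm 1$, not to $1$), which is exactly why the paper outsources this step to a citation rather than arguing it from scratch.
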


\begin{proof}
Since $\Theta_w$ is t-exact up to shift, we have: 
\begin{align*}
[\Theta_{w}(L(T))]=\pm [L(S)].
\end{align*}
The result now follows since the isomorphism $[\cL(\lambda)_0]_\CC \cong S^\lambda, L(T) \mapsto C_T$, is $S_n$-equivariant, and  by \cite[Proposition 10]{BauqWeyl}, the action of the braid group $B=B_n$ on $L(\lambda)_0$  factors through $S_n$.
\end{proof}

Applying this lemma to Theorem \ref{thm:mincat} we obtain a result of Berenstein-Zelevinsky and Stembridge:

\begin{Corollary}\cite{BZ96, Stem96} 
The action of the longest element  on the Kazhdan-Lusztig basis recovers the Sch\"utzenberger evacuation operator, i.e. for $w_0 \in S_n, \lambda \vdash n$ and $T\in \SYT(\lambda)$, we have that 
$w_0\cdot C_T =\pm C_{e(T)}$.
\end{Corollary}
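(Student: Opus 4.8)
The plan is to obtain the corollary as a one-line consequence of Lemma~\ref{lem:KLact}, once we know that $\Theta_{w_0}$ is t-exact up to shift on $D^b(\cL(\lambda)_0)$ and that its associated bijection is evacuation. First I would specialise Theorem~\ref{thm:mincat} to the zero weight: taking $\mu=0$, the equivalence $\Theta_{w_0}\bone_0[-n]\colon D^b(\cL(\lambda)_0)\to D^b(\cL(\lambda)_0)$ is t-exact, where $n=ht(-w_0(\lambda))=ht(\lambda)$ (the last equality holds because $-w_0$ permutes the simple roots in type~A). In particular $\Theta_{w_0}$ is t-exact up to shift, so the hypothesis of Lemma~\ref{lem:KLact} is met with $w=w_0$.

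Lemma~\ref{lem:KLact} then yields, for every $T\in\SYT(\lambda)$, that $w_0\cdot C_T=\pm C_{S}$ with $S=\varphi_{\Theta_{w_0}}(T)$. It remains only to identify $\varphi_{\Theta_{w_0}}$ with the evacuation operator $e$. Being t-exact up to shift, $\Theta_{w_0}$ is perverse with respect to the trivial filtration, so $\varphi_{\Theta_{w_0}}$ is the restriction to the zero weight space of the bijection $\varphi_I$ of Section~\ref{sec:cacti} (for a simple categorification, Corollary~\ref{cor:mainthm} shows the perversity data defining $\varphi_I$ is likewise trivial). By Theorem~\ref{thm:agree} this $\varphi_I$ is the generalised Sch\"utzenberger involution $\xi_I$ on the crystal $\bB(\lambda)=\Irr(\cL(\lambda))$, and on the zero weight space of $\bB(\lambda)$, identified with $\SYT(\lambda)$, the map $\xi_I$ is precisely evacuation. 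Hence $S=e(T)$.

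The only input beyond direct citation is the classical identification of $\xi_I$ on the zero weight crystal of $\bB(\lambda)$ with evacuation of standard Young tableaux, which is exactly the fact already flagged at the start of this section; I do not expect a genuine obstacle here. The sign $\pm$ is the ambiguity built into Lemma~\ref{lem:KLact} (traceable to the quantum Weyl group normalisation, cf.\ Lemma~\ref{lem:fulltwist}), and the statement makes no attempt to pin it down.
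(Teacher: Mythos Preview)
Your argument is correct and follows exactly the route the paper takes: the paper's proof is the single sentence ``Applying this lemma to Theorem~\ref{thm:mincat}'', and you have simply unpacked that sentence---Theorem~\ref{thm:mincat} gives t-exactness up to shift of $\Theta_{w_0}$ on $D^b(\cL(\lambda)_0)$, Lemma~\ref{lem:KLact} then yields $w_0\cdot C_T=\pm C_{\varphi_{\Theta_{w_0}}(T)}$, and the identification $\varphi_{\Theta_{w_0}}=\varphi_I=\xi_I=e$ via Theorem~\ref{thm:agree} is precisely what the paper records in the paragraph just before Proposition~\ref{prop:promo}. No differences of substance.
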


Similarly we can prove a result of Rhoades regarding the action of the long cycle  $c_n=(1,2,\hdots,n)$ on the Kazhdan-Lusztig basis.  Note that in the statement below the significance of $\lambda$ being rectangular  is that the restriction of $S(\lambda)$ to $S_{n-1}$ in this case remains irreducible. 

\begin{Proposition}(cf. \cite[Proposition 3.5]{Rhoad})
Let $\lambda=(a^b)\vdash n$ be a rectangular partition.  Then for any $T \in \SYT(\lambda)$, the action of the long cycle on the Kazhdan-Lusztig basis element $C_T$ recovers the promotion operator:
$$
c_n\cdot C_T=\pm C_{j(T)}.
$$ 
\end{Proposition}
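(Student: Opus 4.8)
The plan is to deduce this from Lemma~\ref{lem:KLact} applied to $w=c_n$, exactly as in the Berenstein--Zelevinsky--Stembridge corollary above. Since $\Theta_{c_n}\colon D^b(\cL(\lambda)_0)\to D^b(\cL(\lambda)_0)$ is a perverse equivalence whose associated bijection is the promotion operator $j$ (Proposition~\ref{prop:promo}), it suffices to check that $\Theta_{c_n}$ is t-exact up to shift on $D^b(\cL(\lambda)_0)$; Lemma~\ref{lem:KLact} then yields $c_n\cdot C_T=\pm C_{j(T)}$. For a general shape $\lambda$ this t-exactness fails ($\Theta_{c_n}$ is merely perverse), so rectangularity of $\lambda$ must be used precisely at this point. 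The cases where $S(\lambda)$ is one-dimensional (namely $\lambda$ a single row or single column, where $c_n$ acts by $\pm 1$ and $j$ is trivial) are immediate and should be disposed of at the outset; assume henceforth $a,b\ge 2$.

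To establish the t-exactness I would use the factorisation $\Theta_{c_n}\cong\Theta_{w_0}\Theta_{w_0^J}^{-1}$ from the proof of Proposition~\ref{prop:promo}, where $J=\{1,\dots,n-2\}$. The factor $\Theta_{w_0}$ is t-exact up to shift on $D^b(\cL(\lambda)_0)$ by Theorem~\ref{thm:mincat}, so it remains to treat $\Theta_{w_0^J}$. Regard $\cL(\lambda)$ as a categorical representation of $U_q(\fg_J)\cong U_q(\fsl_{n-1})$ by restriction to $J$; since $w_0^J$ fixes the weight $0$, $\Theta_{w_0^J}$ restricts to an autoequivalence of $D^b(\cL(\lambda)_0)$, to which Theorem~\ref{thm:perveq} applies.

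The heart of the matter --- and the step I expect to require the most care --- is a branching computation showing that $\cL(\lambda)_0$, as a single weight category of the restricted $U_q(\fsl_{n-1})$-representation, meets only one subquotient of a Jordan--H\"older filtration (Theorem~\ref{thm:JH}). The dominant weights occurring in $L(\lambda)$ restricted to $U_q(\fsl_{n-1})$ are the images of the partitions $\mu$ interlacing $\lambda=(a^b)$, that is $\mu=(a^{b-1},k)$ for $0\le k\le a$; the $\fsl_{n-1}$-weight $0$ is a weight of $L(\mu)$ only when $(n-1)\mid|\mu|$, and since $|\mu|\in[n-a,n]$ with $n\ge 4$ this forces $|\mu|=n-1$, i.e. $k=a-1$, giving the unique shape $\mu^\ast=(a^{b-1},a-1)$. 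This is precisely the assertion that $S(\lambda)$ remains irreducible on restriction to $S_{n-1}$. A short weight-lattice check (valid because $b\ge 2$, using that $(0^{n-1},n)$ is not a weight of $L(\lambda)$ unless $\lambda=(n)$) identifies $\cL(\lambda)_0$ with the entire zero-$\fsl_{n-1}$-weight category, so the Jordan--H\"older subquotients it meets are exactly those of type $\mu$ with $|\mu|=n-1$, of which there is exactly one. Consequently the perversity datum furnished by Theorem~\ref{thm:perveq} on $D^b(\cL(\lambda)_0)$ collapses to a single step at $\mu^\ast$, and condition~(2) of Definition~\ref{def:PE} then says precisely that $\Theta_{w_0^J}\bone_0[-p]$ is t-exact for that single value of $p$ --- equivalently, one may invoke Corollary~\ref{cor:mainthm}, the part of $\cL(\lambda)$ visible from $\cL(\lambda)_0$ being isotypic of type $\mu^\ast$. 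Hence $\Theta_{w_0^J}$, and therefore $\Theta_{c_n}$, is t-exact up to shift on $D^b(\cL(\lambda)_0)$, which completes the argument. The only subtlety I anticipate beyond bookkeeping is checking carefully that the induced perversity triple on $D^b(\cL(\lambda)_0)$ really does reduce to a single non-trivial step, but this is formal once the branching computation is in hand.
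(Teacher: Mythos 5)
Your proof is correct and follows essentially the same strategy as the paper's: both boil down to the observation that, for rectangular $\lambda$, the $\fsl_{n-1}$-branching of $L(\lambda)$ places the $\fsl_{n-1}$-weight-zero space inside a single simple constituent (namely $L(\mu^\ast)$ with $\mu^\ast = (a^{b-1},a-1)$, the unique summand whose highest weight lies in the root lattice of $\fsl_{n-1}$), so that the perversity datum for $\Theta_{w_0^J}$ degenerates to a single step on $D^b(\cL(\lambda)_0)$.

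The small difference is one of packaging. The paper constructs a concrete Serre subcategory $\cC\subset\cL(\lambda)$ --- generated by applying monomials in the $J$-Chevalley functors to objects of $\cL(\lambda)_0$ --- checks that $\cC$ is a $U_q(\fsl_{n-1})$-simple categorification of $L(\mu^\ast)$ with $\cC_0=\cL(\lambda)_0$, and then invokes Corollary~\ref{cor:mainthm} directly. You instead go through the Jordan--H\"older machinery (Theorems~\ref{thm:JH} and~\ref{thm:perveq}), observe that $\cL(\lambda)_0$ meets only one layer, and read off t-exactness-up-to-shift from Definition~\ref{def:PE}(2). Both routes require the same branching computation, and you even note the Corollary~\ref{cor:mainthm} alternative at the end. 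Two points you state without fully spelling out, both easy and consistent with the paper: (i) multiplicity-freeness of the $\fsl_{n-1}$-branching of $L((a^b))$ is what guarantees there is literally \emph{one} Jordan--H\"older layer of type $\mu^\ast$, not merely one type; and (ii) "the part of $\cL(\lambda)$ visible from $\cL(\lambda)_0$" being a genuine subcategorification is precisely what the paper's explicit construction verifies, though your primary JH-filtration route sidesteps needing this. Otherwise this is a faithful and accurate reconstruction.
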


\begin{proof}
As above, set $J=\{1,\hdots,n-2\} \subset I$.  Recall that, since $\cL(\la)$ is a simple categorification, we know  $\Theta_{w_0}:D^b( \cL(\lambda)_0) \to D^b( \cL(\lambda)_0)$ is t-exact up to shift by Theorem \ref{thm:mincat}. However, $L(\la)|_{\fsl_{n-1}}$ is no longer simple, so a priori we only know that $\Theta_{w_0^J}:D^b( \cL(\lambda)_0) \to D^b( \cL(\lambda)_0)$ is perverse, but not necessarily t-exact up to shift.
We first prove that $\Theta_{w_0^J}$ is indeed t-exact up to shift.  

Consider the set of functors which are monomials in the Chevalley functors indexed by $J$:
\begin{align*}
\bM=\{\sG_{j_1}\cdots \sG_{j_s}\;|\; \sG \in \{\sE,\sF \},\;  j_\ell \in J \}.
\end{align*}
Let $\cC$ be the abelian category generated by $\{\sM(X)\;|\; X \in \cL(\lambda)_0,\sM \in \bM\}$, that is, $\cC$ is the category closed under subobjects and quotients of objects of the form $\sM(X)$.  Since the Chevalley functors are exact, it is easy to see that $\cC$ is a categorical representation of $U_q(\fsl_{n-1})$.  

Let $\nu \vdash (n-1)$ be the (unique) partition obtained from $\lambda$ by removing a box.  We claim that $\cC$ is a categorification of $L(\nu)$.  Note that this is  a categorification of the fact that $L(\nu) \cong U_q(\fsl_{n-1})\cdot L(\lambda)_0$.  Indeed, it is clear that $[\cC]_\CC$ contains $L(\nu)$.  On the other hand, if $\cC_\mu\neq0$ then $\mu$ is in the root lattice of $\fsl_{n-1}$, and $L(\nu)$ is the unique constituent of $L(\lambda)|_{\fsl_{n-1}}$ whose weights are in the root lattice.  

Now observe that $\cC_0=\cL(\lambda)_0$. Hence, by Corollary \ref{cor:mainthm} it follows that $\Theta_{w_0^J}:D^b( \cL(\lambda)_0) \to D^b( \cL(\lambda)_0)$ is t-exact up to shift.  Since $\Theta_{w_0}$ is also t-exact up to shift, it follows that $\Theta_{c_n}$ is too.  The result now follows by Proposition \ref{prop:promo} and Lemma \ref{lem:KLact}.
\end{proof}

\maketitle

\bibliography{./monbib}
\bibliographystyle{amsalpha}

\end{document}